\theoremstyle{plain}
\newcounter{parentnumber}
\newtheorem{theorem}{Theorem}[section]
\newtheorem{lemma}[theorem]{Lemma}
\newtheorem{remark}[theorem]{Remark}
\newtheorem{proposition}[theorem]{Proposition}
\acrodef{wlog}[WLOG]{without loss of generality}
\acrodef{lsc}[lsc]{lower semi-continuous}
\definecolor{red}{RGB}{163, 31, 52}
\definecolor{gray}{RGB}{194, 192, 191}
\definecolor{blue}{RGB}{59, 89, 152}
\definecolor{green}{RGB}{0, 179, 0}
\newcommand{\norm}[1]{\left\|#1\right\|}
\newcommand{\abs}[1]{\left|#1\right|}
\newcommand{\E}[2]{{\mathbb E}_{#1} \left[ #2 \right]}
\newcommand{\V}[2]{{\mathbb V}_{#1} \left[ #2 \right]}
\newcommand{\one}[1]{\mathbb{1}\left\{#1\right\}}
\renewcommand{\tfrac}[2]{{#1}/{#2}}
\renewcommand{\Pr}[0]{\mathbb P}
\newcommand{\set}[2]{\left\{ #1\ : \ #2 \right\}}
\newcommand{\tset}[2]{\{ #1\ : \ #2 \}}
\newcommand{\KL}{{\rm{KL}}}
\newcommand{\W}{{\rm{W}}}
\newcommand{\defn}[0]{:=}
\newcommand{\mc}{\mathcal}
\newcommand{\mb}{\mathbb}
\newcommand{\mr}{\mathrm}
\renewcommand{\d}{{\mathrm{d}}}
\renewcommand{\Re}{\mathrm{R}}
\newcommand{\st}{\mr{s.t.}}
\DeclareMathOperator{\TV}{TV}
\title{Robust Mean Estimation for Optimization: The Impact of Heavy Tails}
\author[1]{Bart P.G.\ Van Parys \thanks{bart.van.parys@cwi.nl}}
\author[1,2]{Bert Zwart \thanks{bert.zwart@cwi.nl}}
\affil[1]{CWI Amsterdam}
\affil[2]{TU Eindhoven}
\date{\today}
\begin{document}

\maketitle

\begin{abstract}
  We consider the problem of constructing a least conservative estimator of the expected value $\mu$ of a non-negative heavy-tailed random variable. 
  We require that the probability of overestimating the expected value $\mu$ is kept appropriately small; a natural requirement if its subsequent use in a decision process is anticipated.
  In this setting, we show it is optimal to estimate $\mu$ by solving a distributionally robust optimization (DRO) problem using the Kullback-Leibler (KL) divergence.
  We further show that the statistical properties of the proposed KL-DRO estimator compare favorably with alternative estimators based on truncation, variance regularization, or Wasserstein DRO.

\vspace{1mm}

\noindent
{\bf AMS subject classification:} 60F10, 62G35, 90C17.

\vspace{1mm}

\noindent
{\em Keywords and phrases:} distributionally robust optimization, Kullback-Leibler, large deviations, optimal estimation, regular variation, regularization, truncation.
\end{abstract}

\section{Introduction}

Optimization has been an indispensable tool to make informed decisions based on a mathematical model of reality.
For decisions to be practically relevant, however, it is important that model uncertainty is explicitly taken into account.
Stochastic optimization is concerned with problems of the form
\[
  \min_{x\in X} {\mb E_{\mb P}} [\ell(x, \xi)]
\]
and has classically embodied decision-making in the face of uncertainty. Stochastic optimization indeed encapsulates, through an appropriate choice of loss function $\ell$, a wide variety of problems ranging from the classical newsvendor stocking problem to supervised learning \citep{birge2011introduction,shapiro2021lectures}. 
{\color{black}
  Stochastic optimization traditionally treats the probability distribution of the exogenous uncertain variable as a given object. In practice, however, such distributions are not directly observable. Consequently, modern formulations of decision-making under uncertainty increasingly recognize that empirical data (rather than fully specified distributions) should serve as the fundamental object on which decisions are based.
}

To this end, data-driven Distributionally Robust Optimization (DRO), i.e., 
\begin{equation}
  \label{dro}
  \min_{x\in X} \max_{D({\mb Q},{\mb P}_n) \leq r} {\mb E_{\mb Q}} [\ell(x, \xi)],
\end{equation}
first considered by \cite{scarf1958min} in the context of stochastic newsvendor inventory problems has emerged over the last decade as an appealing framework, cf.\ \cite{rahimian2022frameworks} for a recent survey.
DRO seeks decisions that are optimal under the worst-case distribution in an ambiguity set $\{{\mb Q }: D({\mb Q}, {\mb P}_n) \leq r\}$ constructed as a ball of distributions sufficiently close to the empirical distribution ${\mb P}_n$ of the data according to an appropriate distance measure $D$.
Informally, DRO can hence be interpreted as a worst-case alternative to the classical sample average approximation \citep{kleywegt2002sample}.
Much of the early uptick in the popularity of DRO  may be attributed to its computational tractability.
Indeed, starting with the observation of \cite{delage2010distributionally} concerning the tractability of moment-based ambiguity sets, a wide variety of ambiguity sets have been shown to yield a tractable problem formulation (\ref{dro}).
Particularly noteworthy are formulations based on a statistical divergence \citep{bayraksan2015data} or the Wasserstein optimal transport distance \citep{pflug2007ambiguity, mohajerin2018data, gao2023distributionally, blanchet2019transport}.
Early on, the statistical properties of DRO formulations were approached indirectly by guaranteeing that their ambiguity sets serve as confidence sets for the unknown distribution \citep{delage2010distributionally, bertsimas2018data} at a desired confidence level.
That is, the probability\footnote{Note that, as customary, we slightly abuse notation in that the outer $\mb P$ in the previous equation denotes the product measure $\mb P^n$.}
\begin{equation}
  \label{eq:coverage}
  \Pr\left[D({\mb P}, {\mb P}_n) \leq r\right]
\end{equation}
is required to be sufficiently large which is typically ensured by appealing to an appropriate concentration inequality.
Indeed, it is straightforward to observe that
\begin{equation}
  \label{eq:implication}
  D({\mb P}, {\mb P}_n) \leq r \implies \forall x\in X: {\mb E_{\mb P}} [\ell(x, \xi)] \leq \textstyle  \max_{D({\mb Q},{\mb P}_n) \leq r} {\mb E_{\mb Q}} [\ell(x, \xi)]
\end{equation}
and hence the DRO formulation minimizes a safe worst-case bound on the unknown objective function.
For instance, the statistical properties of the popular Wasserstein DRO formulation, as introduced by \cite{pflug2007ambiguity}, were initially based on a classical result by \cite{dudley1969speed} who established $\E{\mb P}{W({\mb P}_n, {\mb P})}=C n^{-1/\dim(\Xi)}$ {\color{black}where $\xi\in \Xi$} for some constant $C$ assuming the distribution $\mb P$ has bounded support. Later on this result was sharpened to light-tailed distributions by \cite{mohajerin2018data} by appealing to a modern measure concentration result by \cite{fournier2015rate}.
Unfortunately, approaching the statistical properties of DRO formulations via a coverage probability guarantee has its limitations. Indeed, in case of the Wasserstein distance \cite{weed2019sharp} point out that a nonzero coverage probability (\ref{eq:coverage}) requires that the radius $r$ of the Wasserstein ambiguity set shrink no faster than the nonparametric rate $n^{-1/\dim(\Xi)}$. When working with a statistical divergence such as the Kullback-Leibler divergence, the situation becomes even more dire as for any continuous distribution $\mb P$ the coverage probability (\ref{eq:coverage}) remains zero however large the radius $r$.
{\color{black}
Consequently, the statistical properties of DRO formulations have more recently been imposed directly through
\begin{equation}
  \label{eq:coverage-pointwise}
  \Pr\left[{\mb E_{\mb P}} [\ell(x, \xi)] \leq \textstyle  \hat c_n(x) \right] \geq 1-\beta(n) \quad \forall n\geq 1
\end{equation}
with $\hat c_n(x) \defn \max_{D({\mb Q},{\mb P}_n) \leq r} {\mb E_{\mb Q}} [\ell(x, \xi)]$ the predicted worst-case cost and where $\beta(n)$ denotes a desired upper bound on the probability of a disappointment event for an arbitrary decision $x\in X$.
That is, the probability of a disappointment event in which the out-of-sample cost exceeds the predicted cost should be at most $\beta(n)$ pointwise over all decisions $x\in X$.
In fact, as we are interested in controlling the disappointment probability of a decision which minimizes the proxy cost, the stronger uniform probability
\begin{equation}
  \label{eq:coverage2}
  \Pr\left[\forall x\in X: {\mb E_{\mb P}} [\ell(x, \xi)] \leq \hat c_n(x) \right]
\end{equation}
is of primary interest.
However, using an appropriate complexity notion on the set $\mc L=\set{\xi \mapsto \ell(x, \xi)}{x\in X}$
the uniform probability \eqref{eq:coverage2} is typically controlled in terms of the pointwise probabilities \eqref{eq:coverage-pointwise} via a standard covering number argument \citep{vandervaart1996weak}.}

Under mild technical conditions, \cite{gao2023finite, blanchet2021statistical} establish that Wasserstein DRO may indeed achieve a parametric $n^{-1/2}$ rate again assuming the distribution to satisfy a light tail condition.
The statistical properties of DRO formulations based on statistical divergences have been studied in the same vein by \cite{lam2019recovering} and \cite{duchi2021statistics} who show that these formulations also enjoy parametric $n^{-1/2}$ rates by appealing to concentration results from the empirical likelihood theory by \citet{owen1988empirical} which require a finite variance condition.
Apart from several further variants of DRO, other data-driven decision procedures  exist. A well-known example, popular in the machine learning community, is variance regularization \citep{maurer2009empirical}.
{\color{black}
More generally, data-driven decisions $\hat x_n\in \arg\min \hat c_n(x)$ where $\hat c_n(x)$ is a proxy for the actual cost ${\mb E_{\mb P}} [\ell(x, \xi)]$ have been denoted as robust in this literature if they guarantee 
\begin{equation}
  \label{eq:coverage-general}
  \Pr\left[\forall x\in X: {\mb E_{\mb P}} [\ell(x, \xi)] \leq  \hat c_n(x) \right]\geq 1-\beta'(n) \quad \forall n\geq 1
\end{equation}%
where $\beta'(n)$ denotes a desired upper bound on the probability of a disappointment event for any decision $x\in X$.}
Nevertheless, the literature leaves the practitioner with an unwieldy zoo of robust decision tools
which begs the question which---if any---tool they should prefer.

In a recent line of work, the design of efficient and trustworthy decision methods was given direction by focusing on {\em least conservative} data-driven optimization procedures. 
{\color{black}
  A robust data-driven decision $\hat x_n\in \arg\min \hat c_n(x)$ comes indeed via \eqref{eq:coverage-general} with the explicit promise that its decision will cost no more than the predicted cost $\hat c_n(\hat x_n)$ with high probability. However, should we have access to another estimator $\hat c_n'$ satisfying the same guarantee \eqref{eq:coverage-general} which is \textit{less conservative} in that $\hat c_n'(x) \leq \hat c_n(x)$ for all $x\in X$ then clearly the latter should be preferred over the former as its decision $\hat x'_n\in \arg\min \hat c'_n(x)$ promises lower downstream decision costs yet enjoys the same out-of-sample disappointment guarantee.
  In \cite{vanparys2021data} it is shown based on a large deviation argument \citep{dembo2009large} that DRO formulations with a Kullback-Leibler distance function $D$ in (\ref{dro}) (henceforth abbreviated as KL-DRO) provide a \textit{least conservative} proxy $\hat c^{\KL}_n$ which among a large class of proxies $\hat c$ satisfying the out-of-sample performance guarantee (\ref{eq:coverage-general}) enjoys the least conservative property that $\limsup_{n\to\infty}\hat c_n^{\KL}(x) \leq \liminf_{n\to\infty}\hat c_n(x)$ for all $x\in X$.
While originally only established by \cite{vanparys2021data} for $\beta'(n)=\exp(-nr)$ for $r \geq 0$, an appropriately modified least conservativeness property of KL-DRO has been shown by \citet{bennouna2021learning} to extend to any $\beta'(n) \to  0$.}
This statistical efficiency of KL-DRO in terms of downstream decision performance has been shown to be resilient to the specific statistical framework considered as similar efficiency statements have been made in recent literature \citep{lam2019recovering, duchi2021statistics}.
However, as we have pointed out in this introduction, the statistical analysis of modern DRO formulations requires data with bounded support, light-tails or finite variance.
In contrast, in many practical situations, data is heavy-tailed, and sometimes even has infinite variance \citep{nair2022fundamentals}.
Heavy tails complicate the analysis of the statistical behavior of popular DRO approaches.
\cite{pflug2007ambiguity}, who seems to be the first to consider Wasserstein DRO formulations, notes already that the rate at which the Wasserstein ambiguity set based on a coverage guarantee can shrink may be arbitrarily slow if the tails of the distribution $\mb P$ are heavy enough due to a result by \cite{kersting1978geschwindigkeit}. 
The performance of data-driven decision-making algorithms in a heavy-tailed setting remains poorly understood, which this work seeks to address.

Any data-driven optimization procedure critically relies on finding a good estimator $\hat c_n(x)$ for the mean of the random loss $\ell(x, \xi)$.
When working with heavy-tailed losses, a good estimator should provide robust protection against extreme outlier losses.
A rich literature on this topic exists, and several estimators have been suggested, such as the median-of-mean estimator, $M$-estimators, truncated and/or trimmed means, and estimators based on self-normalization.
For background on such estimators, we refer to \cite{bhatt2022nearly, catoni2012challenging, Lugosi2019survey, minsker2023mom, nemirovskij1983problem, oliveira2025trimmedmean, shao1997self}.
{\color{black} Several of these estimators are known to enjoy strong statistical optimality properties: for instance, the median-of-means estimator achieves sub-Gaussian deviation bounds under only a finite variance assumption \citep{Lugosi2019survey}, meaning they guarantee $|\hat c_n(x) - \E{\mb P}{\ell(x, \xi)}| \leq C\sigma\sqrt{\log(1/\delta)/n}$ with probability $1-\delta$ at the optimal rate \citep{devroye2016subgaussian}, while \cite{bhatt2022nearly} establish minimax rate-optimal bounds under an infinite variance assumption.}
However, these optimality notions are fundamentally symmetric: over- and underestimating the expected loss are treated as equally undesirable errors.
In a decision-making context, one is in stark contrast mainly concerned with the asymmetric, one-sided error of being too optimistic \citep{mohajerin2018data, vanparys2021data, bennouna2021learning}: an overestimate of $\mu$ directly leads to a decision whose out-of-sample cost exceeds its predicted value, which has immediate consequences for the downstream optimization.

\subsection{Contributions}
\label{ssec:contributions}

{\color{black}Least conservativeness is a performance criterion tailored to a decision-making context, and distinct from the minimax deviation criterion studied in the estimation literature. The main contribution of this paper is to show that this criterion singles out KL-DRO even in a heavy-tailed setting. This will become apparent even in a setting where the loss $\ell(x, \xi)$ is bounded in one direction for a fixed decision $x$ as in the pointwise guarantee \eqref{eq:coverage-pointwise}; the stronger uniform guarantee \eqref{eq:coverage2} then follows via a standard covering number argument, as noted earlier.}
Since the decision $x$ is fixed throughout, it becomes redundant. To lighten notation and connect more naturally to statistical estimation, we henceforth write $\zeta := -\ell(x, \xi)$ for the non-negative revenue and $\mu := \E{}{\zeta}$ for its mean, and refer to $\hat e_n$ as an \emph{estimator} rather than a cost proxy. {\color{black} The least-conservativeness criterion then takes a more familiar form: among all estimators $\hat e_n$ satisfying
\begin{equation}
  \label{eq:coverage-pointwise-reversed}
  \Pr\left[\hat e_n > \mu\right] \leq \exp(-\lambda(n)) \quad \forall n \geq 1,
\end{equation}
which is the least conservative? Note that \eqref{eq:coverage-pointwise-reversed} is the equivalent to the pointwise guarantee \eqref{eq:coverage-pointwise} in the new notation with $\beta(n)=\exp(-\lambda(n))$. The exponential parameterisation is without loss of generality and natural, as such rates arise organically for KL-based methods via Cram\`er's theorem and large deviations theory \citep{dembo2009large}.} The key question is how to treat the outliers in heavy-tailed data in an optimal data-driven way.

We formalize this goal in Section \ref{sec-challenges}, and investigate how several existing methods from the literature (specifically: sample mean,  Wasserstein DRO, truncated mean, and variance regularization) behave. We focus primarily on the probability of being too optimistic (namely that our estimate exceeds $\mu$), but also the probability of being too conservative, i.e., that $\hat e_n < \mu - b$ for some $b > 0$. It turns out that
none of the listed methods provide sharp yet robust estimators of the mean $\mu$ in a purely data-driven way; in particular, truncation schemes require quantitative information about higher moments if theoretical guarantees are required; the same holds for median-of-mean estimators and $M$-estimators. 

The main results of our study are presented in Section 3, where we show in Theorem \ref{thm-klopt} that KL-DRO retains its optimality properties (previously established in a bounded support case in \cite{vanparys2021data, sutter2024pareto, bennouna2021learning}) in a heavy-tailed setting.
In addition, we show in Theorem \ref{prop-klub} that KL-DRO leads to a conservative estimate of $\mu$ with probability $1-\exp\{ - r(n) n(1+o(1))\}$ where {\color{black} $r(n)=\lambda(n)/n$} substitutes for the robustness radius $r$ in formulation (\ref{dro}). Finally, in Proposition \ref{prop-kllb} we show that the probability of being conservative by a term $b>0$ decreases exponentially fast in the number of data points $n$ --- surprisingly at the same rate as for the non-robust sample mean (which is recovered when $r=0$ in (\ref{dro})).
These insights show that KL-DRO remains the premier robust method of choice even with heavy-tailed data and generalizes previous work of \cite{bennouna2021learning} who assumed the support of $\zeta$ to be discrete. \cite{bennouna2021learning} also shows that in this discrete setting, KL-DRO and variance regularization are asymptotically equivalent in settings where the radius in (\ref{dro}) decays to zero, i.e., $r=r(n)\rightarrow 0$.
We show here that this observation no longer holds in the heavy-tailed case.
Specifically, as we illustrate in Section \ref{sec-challenges} variance regularization may not optimally handle outliers, as it {\em overcompensates} for them, leading to unnecessarily conservative estimates. 

We prove our results using a variety of different mathematical techniques which may be of independent interest. The proofs of the results in Section \ref{sec-challenges} are all rather short and based on arguments from the analysis of heavy-tailed random variables and large-deviations theory; they are presented in Appendix \ref{sec-proofs} to keep the paper self-contained. 
The proofs of our main results in Section \ref{sec-kldro} are likely of more intrinsic interest. In particular, our upper bound for the probability of being too optimistic is based on an improvement of a concentration bound of \cite{cappe2013kullback}, and a similar result of \cite{agrawal2021regret}; our key lemma in this respect is Lemma \ref{lemma:upper-bound-log-log-nonasymptotic}. 
The proof of Theorem \ref{thm-klopt} follows by contradiction, and leverages dual properties of KL-DRO. In particular, Proposition \ref{lemma:variance-bound} provides a detailed asymptotic analysis of the variance of the dual solution using techniques from the theory of regularly varying functions. 

If losses are unbounded and in particular heavy-tailed in both directions, the associated analysis will be more challenging, and the nature of the results potentially different. In fact, at a minimum it would require imposing a structural assumption on the distribution ${\mb P}$ such as the existence of a finite moment of order $a>1$ as considered in related work on heavy-tailed bandits \cite{agrawal2021regret}. Our focus on losses being bounded in one direction does not require any such structural assumption and allows for an elegant mathematical treatment.

\subsection{Outline}

In Section \ref{sec-challenges} we give a careful introduction of our setting, 
and give an overview of four methods from the literature. Our main results on KL-DRO can be found in Section \ref{sec-kldro}. Proofs of our results are presented in Sections \ref{sec-proofub}, \ref{sec-proofoptimality}, and Appendix \ref{sec-proofs}.

\subsection{Notation} 

We say that a random variable $\zeta$ is regularly varying of index $\rho>1$ if  
\begin{equation}
\label{eq-assumptiontail}
    {\mb P}[\zeta > u] = L(u)u^{-\rho}, 
\end{equation}
with $\rho>1$ and $L(u)$ slowly varying, i.e.\ $L(au)/L(u)\rightarrow 1$ as $u\rightarrow\infty$ for $a>0$.  A function $L$ is slowly varying if and only if
  \begin{equation}
    \label{eq:karamata-representation}
    L(u) = \exp\left(\delta(u) + \int_1^u \frac{\epsilon(t)}{t} \d t\right) \quad \forall u\geq 1
  \end{equation}
  for a measurable function $\delta(u)$  with $\lim_{u\to\infty} \delta(u) < \infty$ and $\epsilon(t)$ a measurable function with $\lim_{t\to\infty} \epsilon(t)=0$ {\cite[Theorem 1.3.1]{bingham1989regular}}.
We say that a random variable $\zeta$ is regularly varying if it is regularly varying of some index $\rho>1$.
We write $f(x)/g(x)\rightarrow 0$ as $f(x)= o(g(x))$. 
We denote the probability simplex on $\Re_+$ as $\mc P$.
Given $n$ independent samples $\zeta_1,...,\zeta_n$ from a common random variable $\zeta$ with distribution $\mb P\in \mc P$, we denote their empirical measure by  $\mb P_n$. The expected value and variance of random variables under ${\mb P}$ are written by $\E{\mb P}{\cdot}$ and $\V{\mb P}{\cdot}$. In particular, the sample mean and sample variance of  $f(\zeta)$ (with $f$ deterministic) are written as $\E{\mb P_n}{f(\zeta)}$ and $\V{\mb P_n}{f(\zeta)}$.
In particular, we have
\begin{equation}
\label{eq-mean}
    \hat \mu_n  := \E{\mb P_n}{\zeta} = \textstyle \frac 1n \sum_{i=1}^n \zeta_i
\mbox{ and }
  \hat \sigma_n^2  := \V{\mb P_n}{\zeta} = \textstyle \frac 1n \sum_{i=1}^n \left(\zeta_i- \frac 1n \sum_{j=1}^n \zeta_j\right)^2.  
\end{equation}

\section{Challenges with existing approaches}
\label{sec-challenges}

\subsection{Disappointment and bias}
As mentioned in the introduction, our goal is to examine robust estimators of the mean reward $\mu = {\mb E}[\zeta]$.
Given $n$ i.i.d.\ samples $\zeta_1,...,\zeta_n$ and their associated empirical measure $\mb P_n$ our aim is to construct an estimator $\hat e_n =\hat e_n( \mb P_n)$ which safely estimates $\mu$ in an optimal way despite the fact that, as we work with heavy-tailed data, outliers are expected.
In this section, we will not yet define what ``optimal'' means (for this, refer to Theorem \ref{thm-klopt} below), but rather focus instead on reviewing several existing methods. 

We examine these methods 
through two specific performance metrics. 
The first property relates to the probability of the estimator being {\em too optimistic}. In particular, we say that
an estimator $\hat e_n$ is {\em feasible at a disappointment rate $\lambda(n)$} if it holds that
\begin{equation}
  \label{eq:exponential-disappointment-rate}
  \limsup_{n\to\infty}  \frac {1}{\lambda(n)} \log \Pr \left[\hat e_n > \mu \right] \leq -1
\end{equation}
{\color{black}which is the asymptotic equivalent of (\ref{eq:coverage-pointwise-reversed}).}
That is, the rate $\lambda(n)$ imposes a speed of decay of the probability that our estimator overestimates the unknown expected reward on a logarithmic scale and hence disappoints. This statistical requirement has been imposed in previous works. In particular, in \cite{vanparys2021data, liu2023smoothed} it was required that $\lambda (n) = rn$ for fixed $r>0$, while in \cite{bennouna2021learning} sublinear and superlinear scalings were considered. It was shown in \cite{bennouna2021learning} that superlinear scalings of $\lambda (n)$ require $\hat e_n\to \max \zeta =\infty$ and is hence not of interest here. Consequently, we will concern ourselves with the case that $\lim_{n\to\infty} \lambda (n)/n < \infty$. Finally, \cite{duchi2021statistics} and \cite{lam2019recovering} considered the case $\lambda(n)=O(1)$. Although we do not rule out this case entirely, our main result in Section \ref{sec-kldro} does require $\lambda(n)\gg \log\log(n)$.

Constructing estimators that are feasible as defined in the previous paragraph, may yield trivially conservatively biased estimators (i.e., $\hat e_n\equiv 0$).
Our goal in the next section is to construct a ``least conservative'' estimator.
In the present section, however, we review several existing estimators satisfying (\ref{eq:exponential-disappointment-rate}) and examine the probability that an estimator is {\em conservatively} biased by a term $b$.
That is, we examine the {\em conservatism probability} $\Pr [\hat e_n < \mu-b]$ for $b>0$; the faster this probability converges to $0$, the less conservative and more preferred the estimator.

In the next four subsections, we will investigate the performance of several well-known estimators according to these two metrics. 

\subsection{Sample mean estimators}
\label{sec:sample-mean-pred}

We will consider here perhaps the simplest estimator $\hat e_{n,\Delta} = \hat\mu_n-\Delta$ by reducing the sample mean $\hat \mu_n = \frac 1n \sum_{i=1}^n \zeta_i$ with a bias term  {\color{black}$\Delta>0$}.
Using existing results from the literature, we can assess the feasibility and conservativeness of $\hat e_{n,\Delta}$. For instance, \cite{wang2008sample} investigates this estimator using a standard large deviation argument requiring a finite moment generating function.
However, a standard result (see for example
 \citet[Theorem 3.11]{nair2022fundamentals}) is that for regularly varying rewards $\zeta$ of index $\rho>1$ we have
 \begin{equation}
   \label{eq:catastrophe}
    \Pr[ \hat e_{n,\Delta}> \mu] = n {\mb P}[\zeta > \Delta n] (1+o(1)) = \exp \left( - (1+o(1)) (\rho-1) \log n\right).
\end{equation}
{\color{black} Hence, if the disappointment rate $\lambda(n)$ satisfies $\liminf_{n\to\infty} \lambda (n)/ \log n> \rho-1$ then no nontrivial sample mean estimators are feasible in (\ref{eq:exponential-disappointment-rate}). In this regime, a deflated sample mean $\hat e_{n,\Delta(n)}$ is only feasible when $\Delta(n)\to\infty$ sufficiently fast and hence by the law of large numbers the estimates are trivial as $\hat e_{n,\Delta(n)}\to -\infty$ almost surely. 
The reason is that the sample mean is very sensitive to even a single outlier and such outliers occur far more frequently than in a light tailed setting.}
In addition, this class of estimators is by construction biased by a factor $\Delta$ and hence conservative. {\color{black} We illustrate this finding on a Pareto example in Section~\ref{sec:numerics} (Figure~\ref{fig:mean-vs-kl}).}

{\color{black}The sample mean $\hat e_{n,0}=\hat \mu_n$ itself is not conservative, but rather too optimistic, as it does not guarantee (\ref{eq:exponential-disappointment-rate}) for any $\lambda(n)\uparrow \infty$.}
We can apply Cram\`ers theorem \citep[Theorem 2.2.3]{dembo2009large} to show that its conservatism probability satisfies
\begin{equation}
\label{sample-mean-left-tail}
-\frac1n  \log  \Pr[ \hat e_{n,0} < \mu -b] \rightarrow I(b) := \sup_{s>0} \, (b-\mu)s-\log \E{\mb P}{ \exp(-s\zeta ) } .
\end{equation}
Since $b>0$ and $\zeta\geq 0$, the supremum characterizing the exponential rate $I(b)$ can be restricted to non-negative $s$ for which $\E{\mb P}{ \exp(-s\zeta )}<\infty$, and it follows that $I(b)>0$.
{\color{black} We include the sample mean as a reference in Figure~\ref{fig:conservatism} (Section~\ref{sec:numerics}) to illustrate this Cram\`er rate.}
Perhaps surprisingly, we will see later it is possible to construct an estimator that achieves feasibility at any desired sublinear rate $\lambda(n)$ while not being more conservative than the sample mean.

The previous result shows that, in a heavy-tailed setting, the simple estimation strategy of deflating the empirical mean is inadequate. Perhaps a fair criticism of this observation would be that such estimators are far too simplistic to be taken seriously. However, any estimator which is larger than the estimator $\hat e_{n,\Delta}$ can trivially also not be feasible either. 

\subsection{Optimal transport estimators}
\label{ssec:wass-dro-estim}

For instance,  consider an optimal transport estimator which is defined as the solution to the optimization problem
\begin{align}
  \label{def:W-estimator}
\hat e^{\W}_{n,\gamma} \defn & \left\{
  \begin{array}{rl}
    \min & \int u \,\d \mb Q(u)\\
    \st & \mb Q\in \mc P, ~\W({\mb P}_n, \mb Q)\leq \gamma.
  \end{array}\right.
\end{align}
where $W$ denotes an optimal transport metric, i.e.,
\(
  \W(\hat{\mb P}, \mb Q) = \inf \tset{\int d(u, v) \, \d {\mb T}(u, v)}{\mb T\in \Gamma(\hat{\mb P}, \mb Q)}
\)
where $d:\Re\times\Re\to\Re_+$ denotes a convex lower semicontinuous transport cost with $d(u,v)=0$ for $u=v$ and where the transport polytope of all joint distributions $\mb T$ with marginals $\hat{\mb P}$ and $\mb Q$ is denoted as $\Gamma(\hat{\mb P}, \mb Q)$.
This class of optimal transport metrics has been a popular choice in distributionally robust optimization, due to their appealing computational properties and solid statistical guarantees in light-tailed settings \citep{blanchet2019transport, gao2023distributionally, mohajerin2018data, kuhn2019wasserstein}.
{\color{black}
To examine its statistical properties in a heavy-tailed setting, observe first that for the classical choice $d(u,v)=\norm{u-v}$ associated with the Wasserstein metric \citep{villani2008optimal}, Kantorovich-Rubinstein duality \cite[p.\ 4]{kuhn2019wasserstein} gives $\hat e^{\W}_{n,\gamma} = \max(0,\hat e_{n,\gamma})$, and hence $\Pr[\hat e^{\W}_{n,\gamma}>\mu] = \Pr[\hat e_{n,\gamma}>\mu]$, which can be observed numerically in Figure~\ref{fig:mean-vs-kl}.
More generally, the following proposition shows that optimal transport estimators with continuous convex transport costs, such as those considered in \citet{gao2023distributionally, mohajerin2018data, kuhn2019wasserstein}, exhibit the same catastrophic sensitivity to heavy-tailed data as the sample mean.

\begin{proposition}
  \label{prop-wass-lb}
  Let $\zeta$ be regularly varying of index $\rho>1$ and let $d$ be a continuous convex transport cost with $d(u,v)=0$ for $u=v$ and $d(\mu+\Delta,\mu)>0$ for some $\Delta>0$.
  Then for any fixed $\gamma\geq 0$,
  \[
    \Pr\left[\hat e^{\W}_{n,\gamma}>\mu\right] \geq \exp\left(-(1+o(1))(\rho-1)\log n\right).
  \]
\end{proposition}

The proof is given in Appendix~\ref{sec-proofs}.
Hence, if the disappointment rate $\lambda(n)$ satisfies $\liminf_{n\to\infty} \lambda (n)/ \log n> \rho-1$ then no nontrivial optimal transport estimators with convex cost is feasible in (\ref{eq:exponential-disappointment-rate}). In this regime, such estimators are only feasible when $\gamma(n)\to\infty$ sufficiently fast, but then the estimates are trivial: since $\zeta\geq 0$ we have $\hat e^{\W}_{n,\gamma}\geq 0$ always, while if $\E{}{d(\zeta,0)}<\infty$ then $\W(\mb P_n,\delta_0)=\frac{1}{n}\sum_{i=1}^n d(\zeta_i,0)\to \E{\mb P}{d(\zeta,0)}$ almost surely by the law of large numbers, so $\delta_0$ eventually becomes feasible as $\gamma(n)\to\infty$, giving $\hat e^{\W}_{n,\gamma(n)}\to 0$.
}

{\color{black}
  The total variation distance which is associated with the nonconvex transport cost $d(u, v)=\one{u\neq v}$ \citep{shapiro2017distributionally, bennouna2022holistic} is not subject to the negative result in Proposition \ref{prop-wass-lb}. Denote its corresponding estimates as
  \begin{align*}
    \hat e^{\TV}_{n, \gamma} \defn & \left\{
                                \begin{array}{rl}
                                  \min & \int u \,\d \mb Q(u)\\
                                  \st & \mb Q\in \mc P, ~\TV(\mb P_n, \mb Q)\leq \gamma.
                                \end{array}\right.
  \end{align*}
  We will show in Section \ref{sec-kldro} that such total variation estimates are robust to heavy-tailed data. That is, using a judiciously scaled robustness radius $\gamma(n)$, they are both nontrivial and satisfy the statistical guarantee required in Equation (\ref{eq:exponential-disappointment-rate}) as long as $\lambda(n)/\log \log n\rightarrow\infty$. We discuss these total variation estimates in the context of the numerical example in Figure \ref{fig:conservatism} (Section \ref{sec:numerics}).
}

\subsection{Truncated sample mean estimators}
\label{ssec:trunc-sample-mean}

Assume we are given constants $a\in (1,2],A<\infty$ such that ${\mb E} [ \zeta^a]\leq A$. 
In this setting, it is possible to construct a feasible estimator which is unbiased  and as conservative as the sample mean, if $\lambda(n) = o(n)$. This can be achieved by appropriately truncating large values of $\zeta_i$. We summarize our results in the following proposition and refer to Section \ref{sec-proofs} for a proof. 
For similar results, we refer to  \cite{bubeck2013bandit}. For convenience, we state the cases $a=2$ (finite variance) and $a\in (1,2)$ (infinite variance) separately. 

\begin{proposition} 
  \label{prop-trim-mean}
  Let $r(n) = \lambda(n)/n$. Then,
  \begin{itemize}
  \item[(i)] For $a=2$, define 
    $C = \max \{1/4, Ae^2/2\}$. Then,
    \begin{equation}
      \Pr \left[\frac 1n \textstyle\sum_{i=1}^n (\zeta_i \wedge (1/\sqrt{r(n)})) -  2\sqrt{C r(n)}   > \mu \right]\leq  e^{-\lambda(n)} \quad \forall n\geq 1.
    \end{equation}
  \item[(ii)] 
    For $a\in (1,2)$, 
    define 
    $C= \max \{\frac{1}{(a-1)a^a}, A e^a\frac{2}{2+a}\}$ and
    $c_a= (a-1)^{-(a-1)/a} a C^{1/a}$.
    Then,
    \begin{equation}
      \Pr \left[\frac 1n \textstyle\sum_{i=1}^n (\zeta_i \wedge r(n)^{-1/a}) - c_a  r(n)^{(a-1)/a} > \mu \right]
      \leq  e^{-\lambda(n)} \quad \forall n\geq 1.
    \end{equation}
  \end{itemize}
\end{proposition}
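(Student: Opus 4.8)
The plan is to prove both items by a single truncation-and-Chernoff argument and to read off the two cases only at the very end. Write $\tau=\tau(n)$ for the truncation level (so $\tau=r(n)^{-1/a}$, which equals $1/\sqrt{r(n)}$ when $a=2$) and set $Y_i:=\zeta_i\wedge\tau$, so that the estimator in the proposition is $\tfrac1n\sum_{i=1}^n Y_i-c$ with $c=2\sqrt{Cr(n)}$ (resp.\ $c=c_a r(n)^{(a-1)/a}$); note $\tau^{-a}=r(n)$. Two elementary facts carry the argument. First, truncation can only \emph{lower} the mean, $\E{\mb P}{Y}=\E{\mb P}{\zeta\wedge\tau}\le\E{\mb P}{\zeta}=\mu$, so that $\{\tfrac1n\sum_i Y_i-c>\mu\}\subseteq\{\tfrac1n\sum_i(Y_i-\E{\mb P}{Y})>c\}$; this discards $\mu$ and the (favourable) truncation bias conservatively and reduces the claim to a one-sided concentration estimate for an i.i.d.\ average of $[0,\tau]$-valued variables. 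Second, the $a$-th moment bound controls the spread of $Y$: since $a\le2$ and $0\le Y\le\tau$ we have $Y^2\le\tau^{2-a}Y^a\le\tau^{2-a}\zeta^a$, hence $\E{\mb P}{Y^2}\le A\,\tau^{2-a}$, which reads simply $\E{\mb P}{Y^2}\le A$ when $a=2$.

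Next I would apply the exponential Markov inequality together with a variance-sensitive bound for bounded variables: for every $s>0$,
\[
  \Pr\!\left[\tfrac1n\textstyle\sum_i(Y_i-\E{\mb P}{Y})>c\right]\le\exp\!\left(n\left[\log\E{\mb P}{e^{s(Y-\E{\mb P}{Y})}}-sc\right]\right),\qquad
  \log\E{\mb P}{e^{s(Y-\E{\mb P}{Y})}}\le\frac{\E{\mb P}{Y^2}}{\tau^2}\left(e^{s\tau}-1-s\tau\right),
\]
the second inequality following by expanding the moment generating function and using $\lvert\E{\mb P}{(Y-\E{\mb P}{Y})^k}\rvert\le\E{\mb P}{Y^2}\,\tau^{k-2}$ for $k\ge2$ (valid since $\lvert Y-\E{\mb P}{Y}\rvert\le\tau$). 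Inserting $\E{\mb P}{Y^2}\le A\tau^{2-a}$, using $\tau^{-a}=r(n)$, and substituting $s=\theta/\tau$ (so that $e^{s\tau}=e^\theta$ and $sc$ is a fixed multiple of $\theta\,r(n)$), the exponent collapses to $n\,r(n)$ times a function of $\theta,a,A,C$ alone. Optimising over $\theta$ recognises Bennett's rate function $h(w)=(1+w)\log(1+w)-w$ and produces a bound of the shape $\Pr[\cdots]\le\exp(-n\,r(n)\,A\,h(c_a/A))$, so it remains to verify the purely numerical inequality $A\,h(c_a/A)\ge1$ for the stated $c_a$ and each $a\in(1,2]$.

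This last step is where the two branches in the definition of $C$ come from, and it is the part that requires care. The quantity $A\,h(c_a/A)$ behaves like the sub-Gaussian (Bernstein) bound $c_a^2/(2A)$ when $c_a/A$ is small and like the Poissonian bound $c_a\log(c_a/A)$ when $c_a/A$ is large; the choice $C=\min\{\cdot,\cdot\}$, and the exact exponents in $c_a=(a-1)^{(a-1)/a}a\,C^{1/a}$ (with the factor $\tfrac{2}{2+a}$ originating in the $a$-th-moment control of $Y^2$), is calibrated so that in each regime one of the two candidates for $C$, combined with an elementary lower bound on $h$ such as $h(w)\ge\frac{w^2}{2(1+w/3)}$ or $h(w)\ge\tfrac12 w\log(1+w)$, pushes $A\,h(c_a/A)$ above $1$. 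I expect this constant-chasing --- and, upstream of it, keeping the Bennett step tight enough not to squander the needed slack --- to be the main obstacle; the probabilistic content (one-sidedness of the truncation bias, plus Bennett concentration of a bounded i.i.d.\ average) is otherwise routine, and the two displayed inequalities of the proposition are obtained by specialising the final exponent to $a=2$ and to $a\in(1,2)$.
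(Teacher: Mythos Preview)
Your overall plan---truncate, conservatively discard the truncation bias $\mu-\E{\mb P}{Y}\ge 0$, then apply Chernoff with an MGF bound for the bounded variable $Y$---is the paper's structure. The departure is in \emph{which} MGF bound is used, and this is exactly where the stated constants live. The paper does not route through Bennett; it invokes a Fuk--Nagaev-type estimate (Equation~(2.8) of the Janssen reference) that controls the MGF directly by the $a$-th moment as a \emph{polynomial} in $s$:
\[
  \E{\mb P}{e^{sY}} \;\le\; 1 + s\mu + s^{a}\cdot A\,e^{a}\,\tfrac{2}{2+a},\qquad 0\le s\tau\le a.
\]
After $\log(1+x)\le x$ and the substitution $s=u/\tau$, the Chernoff exponent collapses to $-nr(n)\,[u c_a - C u^{a}]$ with $C=Ae^{a}\tfrac{2}{2+a}$; the maximization over $u$ is purely algebraic and equals exactly $1$ for $c_a$ as in the proposition, while the branch $\tfrac{1}{(a-1)a^{a}}$ in the definition of $C$ is precisely what guarantees the constraint $u\le a$ required by the MGF bound. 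Thus the factors $e^{a}$ and $\tfrac{2}{2+a}$ come from the Fuk--Nagaev inequality itself, not from any second-moment control, and the two branches of $C$ encode the constrained versus unconstrained optimum in $u$, not small/large regimes of Bennett's $h$.

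Your Bennett route via $\E{\mb P}{Y^2}\le A\tau^{2-a}$ is a legitimate way to prove a result of this shape, but it does not recover the stated constants. Your reduction to the requirement $A\,h(c_a/A)\ge 1$ is correct, yet this fails for the $c_a$ of the proposition: at $a=2$ with $A$ large enough that $C=1/4$ (so $c_2=1$), one would need $(A+1)\log(1+1/A)-1\ge 1$, which already fails at $A=1$ (value $2\log 2-1\approx 0.39$) and tends to $0$ as $A\to\infty$. So the obstacle you anticipated as mere ``constant-chasing'' is a genuine gap: by passing through the second moment, Bennett discards exactly the $a$-th-moment structure that the polynomial MGF bound exploits, and no elementary lower bound on $h$ will close it.
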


We now define the truncated mean estimator 
\begin{equation}
    \hat e_{n}^{a,A} = \frac 1n \sum_{i=1}^n (\zeta_i \wedge r(n)^{-1/a}) - c_a r(n)^{(a-1)/a} 
\end{equation}
and conclude using Proposition \ref{prop-trim-mean} that $\hat e_{n}^{a,A}$ indeed satisfies the out-of-sample disappointment requirement (\ref{eq:exponential-disappointment-rate}).
Moreover, it can be verified (by applying 
Lemma \ref{lemma-truncatedlefttail} with $u(n)= r(n)^{-1/a}$ and $v(n)= c_a r(n)^{(a-1)/a}$)
that 
the  probability $\Pr[\hat e_{n}^{a,A} < \mu - b]$ of being conservative by a term $b$ decays to 0 at exponential rate $I(b)$, which is as fast as the sample mean
(\ref{sample-mean-left-tail}). This makes the truncated mean estimator an appealing estimator. However, the truncated mean estimator is not
entirely data-driven as knowledge of the constants $a$ and $A$ is required. A similar remark can be made concerning other estimators of the sample mean which we do not discuss in detail here, such as  median-of-means  \cite{nemirovskij1983problem}
and the $M$-estimator suggested by \cite{catoni2012challenging}.
{\color{black} We include the truncated mean in the numerical comparison of Section~\ref{sec:numerics} (Figure~\ref{fig:conservatism}).}

\subsection{Variance regularization}
\label{ssec:vari-regul}

Imagine for the sake of argument that the random returns have finite variance $\V{\mb P}{\zeta}=\sigma^2<\infty$ and consider here once more the sample mean estimator $\hat e_{n, \Delta}$. The central limit theorem establishes under a finite variance assumption that
\(
  \sqrt{n} \left(\hat \mu_n - \mu\right) \to N(0, \sigma^2)
\)
in distribution. One could hope via the informal line of argument 
\[
   \Pr \left[\hat e_n > \mu \right] = \Pr \left[\sqrt{n}(\hat \mu_n- \mu) > \sqrt{n} \Delta \right]\approx \frac{1}{\sqrt{2\pi \sigma^2}}\int^\infty_{\sqrt{n}\Delta} \exp(-u^2/(2\sigma^2)) \d u\leq \exp(\tfrac{-n\Delta^2}{(2\sigma^2)})
\]
that by setting the tolerance as $\Delta = \sqrt{2 \lambda(n)/n} \sigma$ the sample mean estimator $\hat e_{n, \Delta}$ should be feasible in (\ref{eq:exponential-disappointment-rate}). Leaving aside the fact that in general the variance is not defined, there are still two issues with the presented approach.
The first issue is that, as discussed in Section \ref{sec:sample-mean-pred}, the informal line of argument {\color{black} deceptively} fails to hold for $\mb P$ regularly varying of index $\rho$ and disappointment rates satisfying $\liminf_{n\to\infty} \lambda (n)/ \log n> \rho-1$.
Informally, we observed that the empirical mean is too sensitive to outliers and when exposed to heavy-tailed data tends to \textit{overestimate} the unknown $\mu$ too often to be feasible in (\ref{eq:exponential-disappointment-rate}). A second issue, mirroring our remarks from the previous section, is that this estimator is not entirely data driven as it  requires knowledge of the variance.
A straightforward fix to the second issue is naively substituting the empirical variance for the unknown variance $\sigma^2$.
Let indeed
\begin{equation}
  \label{def:var_reg}
\hat e^{vr}_n = \hat \mu_n -   \sqrt{2 r(n)} \hat\sigma_n,     
\end{equation}
with $r(n) = \lambda(n)/n$ and $\hat \sigma^2_n$ the sample standard variance defined in Equation (\ref{eq-mean}). The statistical properties of this variance regularized estimator have been studied by \cite{maurer2009empirical} and more recently by \citet{duchi2019variance,lam2019recovering,bennouna2021learning}.

Surprisingly, the proposed straightforward substitution also fixes the first sensitivity issue of the empirical mean. Recall that the failure of the empirical mean is fundamentally due to the fact that a single catastrophically large observation can cause it to overestimate the unknown $\mu$.
Let us see how the variance regularization behaves if one of the data points is as catastrophic as the outliers considered in Equation \eqref{eq:catastrophe}. Consider an event where $\zeta_i= d n$, for some $d>0$ and some $i\leq n$. 
In this case, the sample mean $\hat \mu_n$ is increased by $d$, but this is offset by the compensating term $\sqrt{2 r(n)} \hat\sigma_n$. Some elementary computations show 
that this term reduces $\hat e^{vr}_n$ by a term of the order $\sqrt{2\lambda(n)} d$. 
Thus, to reduce $\hat e^{vr}_n$ by $b>0$, a single large value
$b \sqrt{n/(2r(n))}$
of the summands suffices; this is made rigorous in the proof of Proposition \ref{prop-variance-lb} below.
We see that the way outliers are compensated is somewhat excessive. Hence, catastrophically large values in the sample do not lead to too optimistic values of our estimator, but rather {\em too conservative} values. 

{\color{black}
  To make these insights rigorous, we partially rely on limit theory for self-normalized random sums of \cite{shao1997self} who establishes that
  \begin{equation}
    \label{eq:shao-norm-sums}
    \Pr\left[\tfrac{\left(\mu_n-\mu\right)}{\textstyle\sqrt{\frac 1n \sum_{i=1}^n (\zeta_i-\mu)^2}} > \sqrt{2\lambda(n)/n} \right] = e^{-\lambda (n) (1+o(1))}
  \end{equation}
  for any sequence $\lambda(n)\uparrow \infty$ and $\lambda(n)=o(n)$ as long as $x\mapsto \E{}{\zeta^2 \one{\abs{\zeta}\leq x}}$ is slowly varying.
  However, in the light of Equation \eqref{def:var_reg} we will be interested in normalizing using the empirical variance $\hat \sigma_n$ instead.
  The proof of the following result is found in Appendix \ref{sec-proofs} and is based on combining Equation \eqref{eq:shao-norm-sums} with the bias-variance decomposition $\hat \sigma^2_n +  (\mu_n-\mu)^2 = \sum_{i=1}^n (\zeta_i-\mu)^2/n$.
}

\begin{proposition}
\label{prop-variance-ub}
{Assume $\E{}{\zeta^2}<\infty$, $\lambda(n)\uparrow \infty$} and $r(n) = \lambda(n)/n\rightarrow 0$.
  We have
  \[
    \Pr[\hat e^{vr}_n > \mu ] = e^{-\lambda (n) (1+o(1))}.
  \]   
\end{proposition}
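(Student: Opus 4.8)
The plan is to establish matching upper and lower bounds on $\Pr[\hat e^{vr}_n > \mu]$, each of the form $e^{-\lambda(n)(1+o(1))}$, under the assumptions $\E{\mb P}{\zeta^2}<\infty$ and $r(n)\to 0$. Rewriting the event, $\hat e^{vr}_n > \mu$ is equivalent to $\hat\mu_n - \mu > \sqrt{2 r(n)}\,\hat\sigma_n$, i.e. $\sqrt{n}(\hat\mu_n-\mu) > \sqrt{2\lambda(n)}\,\hat\sigma_n$. Since $r(n)\to 0$, we have $\lambda(n) = r(n) n = o(n)$, so this is a moderate-deviations event. The natural device is to divide through by the sample standard deviation and pass to the self-normalized sum $T_n := \sqrt{n}(\hat\mu_n - \mu)/\tilde\sigma_n$, where $\tilde\sigma_n^2 = \frac1n\sum (\zeta_i - \mu)^2$; then after accounting for the difference between centering at $\mu$ versus at $\hat\mu_n$ (a lower-order correction since $\hat\sigma_n^2 = \tilde\sigma_n^2 - (\hat\mu_n-\mu)^2$), the event is essentially $\{T_n > \sqrt{2\lambda(n)}\,(1+o_P(1))\}$.

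First I would invoke the self-normalized moderate deviation principle: under only a finite second moment (in fact this is the classical self-normalized Cramér-type result, e.g. Shao 1997, or the Giné–Götze–Mason / Jing–Shao–Wang theory), for any sequence $x_n\to\infty$ with $x_n = o(\sqrt{n})$ one has
\begin{equation}
  \label{eq:selfnorm-mdp}
  \Pr[T_n > x_n] = \bar\Phi(x_n)\,(1+o(1)) = \exp\left(-\tfrac{x_n^2}{2}(1+o(1))\right),
\end{equation}
where $\bar\Phi$ is the standard normal tail. The crucial point — and the reason variance regularization ``self-corrects'' for heavy tails — is that \eqref{eq:selfnorm-mdp} requires \emph{no} condition beyond $\E{\mb P}{\zeta^2}<\infty$: the normalization by $\tilde\sigma_n$ automatically tames any single catastrophic outlier, because an outlier of size $dn$ inflates numerator and denominator in a balanced way (it contributes $\sqrt{n}\,d/n = d/\sqrt{n}$ to the numerator but $\sqrt{d^2 n^2/n} = d\sqrt{n}$ order growth to $\tilde\sigma_n$, so the ratio is bounded). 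Taking $x_n = \sqrt{2\lambda(n)}$, which satisfies $x_n\to\infty$ (since we assume $\lambda(n)\to\infty$, consistent with the paper's standing regime $\lambda(n)\gg\log\log n$) and $x_n = o(\sqrt{n})$ (since $\lambda(n) = o(n)$), \eqref{eq:selfnorm-mdp} gives $\Pr[T_n > \sqrt{2\lambda(n)}] = e^{-\lambda(n)(1+o(1))}$.

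Then I would handle the gap between the idealized event $\{T_n > \sqrt{2\lambda(n)}\}$ and the actual event $\{\hat e^{vr}_n>\mu\}$. Writing $S_n = \sqrt{n}(\hat\mu_n-\mu)$, the actual event is $S_n > \sqrt{2\lambda(n)}\,\hat\sigma_n$ with $\hat\sigma_n^2 = \tilde\sigma_n^2 - S_n^2/n$. On the event in question $S_n>0$, so $\hat\sigma_n\le\tilde\sigma_n$, giving immediately $\Pr[\hat e^{vr}_n>\mu]\ge\Pr[S_n>\sqrt{2\lambda(n)}\,\tilde\sigma_n] = \Pr[T_n>\sqrt{2\lambda(n)}]$, which is the lower bound. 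For the upper bound, on the relevant event $S_n/\sqrt{n} = \hat\mu_n-\mu$ is small — one shows $\hat\mu_n-\mu = O_P(\sqrt{\lambda(n)/n}\,\sigma) = o_P(\sigma)$ there (or truncate: split according to whether $|\hat\mu_n-\mu|\le\epsilon$ or not, the latter having negligible probability $e^{-\omega(\lambda(n))}$ by a separate Fuk–Nagaev / truncation argument) — so that $\hat\sigma_n^2 \ge \tilde\sigma_n^2(1-\epsilon_n)$ for some $\epsilon_n\to 0$, hence $\Pr[\hat e^{vr}_n>\mu]\le\Pr[T_n>\sqrt{2\lambda(n)(1-\epsilon_n)}] = e^{-\lambda(n)(1+o(1))}$. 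Combining the two bounds yields the claim.

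The main obstacle is the rigorous justification of the self-normalized moderate deviation asymptotic \eqref{eq:selfnorm-mdp} at the precise exponential scale $x_n^2/2$ with only a second-moment assumption, and — more delicately — the uniform control needed to replace $\hat\sigma_n$ by $\tilde\sigma_n$ \emph{inside} the rare event without losing the sharp constant: one must verify that conditionally on the moderate deviation of $S_n$, the empirical variance $\hat\sigma_n$ does not fluctuate enough to change the rate, which requires either a joint moderate deviation statement for $(S_n,\tilde\sigma_n)$ or a careful conditioning argument controlling $\V{\mb P_n}{\zeta}$ on the event $\{\hat\mu_n - \mu \approx \sqrt{2\lambda(n)/n}\,\sigma\}$. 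This is exactly where the cited self-normalized limit theory of Shao does the heavy lifting, and where I would lean on it rather than reprove it.
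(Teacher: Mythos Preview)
Your approach is essentially the paper's: both reduce to the self-normalized statistic and invoke Shao's self-normalized large deviation theorem (this is exactly \cite[Theorem 3.1]{shao1997self} in the paper). The ``main obstacle'' you flag, however---replacing $\hat\sigma_n$ by $\tilde\sigma_n$ on the rare event---is not an obstacle at all. Using your own identity $\hat\sigma_n^2 = \tilde\sigma_n^2 - (\hat\mu_n-\mu)^2$, the event $\{\hat\mu_n-\mu > \sqrt{2r(n)}\,\hat\sigma_n\}$ becomes a quadratic inequality in $T_n$ that solves \emph{exactly} (no approximation) to
\[
  T_n > \sqrt{2\lambda(n)}\sqrt{\frac{n}{n+2\lambda(n)}},
\]
which is $\sqrt{2\lambda(n)}(1+o(1))$ since $\lambda(n)=o(n)$. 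Thus the paper needs only one clean application of Shao's theorem and avoids the sandwiching, conditioning on $\{|\hat\mu_n-\mu|\le\epsilon\}$, and auxiliary Fuk--Nagaev bounds that you outline for the upper bound. Your route would work, but it creates and then laboriously resolves a difficulty that one line of algebra dissolves.
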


We now provide theoretical support for
the insight that the probability of being conservative
by a term $b$ can be rather large. {\color{black} The proof of the following proposition is also deferred to Appendix \ref{sec-proofs}.}

\begin{proposition}
\label{prop-variance-lb}
Assume ${\mb E}[\zeta^2]<\infty$ and $r(n)\rightarrow 0$. We have
\begin{equation}
    \label{VR-LB}
        \Pr[\hat e^{vr}_{n} < \mu- b] \geq (1+o(1)) n {\mb P} [\zeta > b \sqrt{n/(2r(n))}].
    \end{equation}
\end{proposition}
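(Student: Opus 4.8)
The plan is to establish the lower bound by isolating the contribution of a single large observation, exactly as suggested by the heuristic in the paragraph preceding the statement. Fix $b>0$ and set $t(n) := b\sqrt{n/(2r(n))}$. I would consider the event $E_i = \{\zeta_i > t(n)\}$ that the $i$-th observation is catastrophically large, together with the event $F_i$ that all the \emph{other} observations behave ``typically''---say, that their partial sum and partial sum of squares are close to their expectations, $\frac{1}{n}\sum_{j\neq i}\zeta_j \approx \mu$ and $\frac{1}{n}\sum_{j\neq i}\zeta_j^2 \approx \mu^2 + \sigma^2$. The key claim is that on $E_i \cap F_i$ one has $\hat e^{vr}_n < \mu - b$ for $n$ large. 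To see this, write $\hat\mu_n = \frac{1}{n}\zeta_i + \frac{1}{n}\sum_{j\neq i}\zeta_j$ and expand $\hat\sigma_n^2 = \frac{1}{n}\sum_k \zeta_k^2 - \hat\mu_n^2$; the single term $\zeta_i > t(n)$ contributes $\Theta(t(n)^2/n) = \Theta(b^2 n /(2 r(n) \cdot n)) = \Theta(b^2/(2r(n)))$ to $\hat\sigma_n^2$, which dominates since $r(n)\to 0$, so that $\sqrt{2r(n)}\,\hat\sigma_n \gtrsim \sqrt{2r(n)}\cdot \frac{t(n)}{\sqrt n}(1+o(1)) = b(1+o(1))$, while the inflation of $\hat\mu_n$ due to $\zeta_i$ is only $\zeta_i/n \geq t(n)/n = b\sqrt{1/(2 n r(n))} = o(1)$ provided $n r(n)=\lambda(n)\to\infty$ (which follows from $\lambda(n)\gg \log\log n$, or can be assumed as part of the running regime); on $F_i$ the remaining sample-mean and sample-variance pieces converge to $\mu$ and $\sigma^2$. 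A careful bookkeeping then gives $\hat e^{vr}_n \le \mu + o(1) - b(1+o(1)) < \mu - b$ eventually.

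Given the claim, the events $E_i \cap F_i$ for $i=1,\dots,n$ are ``almost disjoint'': the $E_i$ are disjoint up to the negligible event that two observations both exceed $t(n)$, whose probability is $O\big(n^2 {\mb P}[\zeta > t(n)]^2\big) = o\big(n {\mb P}[\zeta>t(n)]\big)$ by regular variation (this is the standard ``one big jump'' heuristic), and on each $E_i$ the conditional probability of $F_i$ tends to $1$ by the weak law of large numbers applied to the $n-1$ remaining i.i.d.\ variables (uniformly in $i$ by exchangeability). Hence
\[
  \Pr[\hat e^{vr}_n < \mu - b] \;\ge\; \Pr\Big[\bigcup_{i=1}^n (E_i\cap F_i)\Big] \;\ge\; \sum_{i=1}^n \Pr[E_i]\Pr[F_i\mid E_i] - o\big(n\,{\mb P}[\zeta>t(n)]\big) \;=\; (1+o(1))\, n\, {\mb P}[\zeta > t(n)],
\]
which is the asserted bound since $t(n) = b\sqrt{n/(2r(n))}$.

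The main obstacle, I expect, is making the deterministic claim ``$E_i\cap F_i \Rightarrow \hat e^{vr}_n < \mu-b$'' genuinely rigorous with the right quantifiers: one must choose the ``typical'' tolerances in $F_i$ (how close the partial sums must be to their means) small enough that the conclusion survives, yet loose enough that $\Pr[F_i\mid E_i]\to 1$; and one must confirm that $\zeta_i/n\to 0$ on $E_i$, which is where the regime $nr(n)=\lambda(n)\to\infty$ is used—worth stating explicitly, since $t(n)/n = b/\sqrt{2\lambda(n)}$. A secondary technical point is handling the conditioning cleanly: since $\zeta_i$ appears in $\hat\sigma_n^2$ both through $\sum_k \zeta_k^2$ and through $\hat\mu_n^2$, one should verify that the cross term $-\hat\mu_n^2$ does not cancel the gain, but since $\hat\mu_n = O(\zeta_i/n + \mu) = o(\sqrt{n}\,)$ on $E_i\cap F_i$ whereas we need the $\zeta_i^2/n$ term to be $\Theta(1/r(n))\gg 1$, the squared mean is of order $(\zeta_i/n)^2 = \zeta_i^2/n^2$, negligible compared to $\zeta_i^2/n$. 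Everything else is a routine application of the weak law and of the single-big-jump principle for regularly varying summands, for which I would cite the same source (\citealp[Theorem 2.7]{nair2022fundamentals}) already invoked for \eqref{eq:catastrophe}.
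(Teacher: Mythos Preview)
Your approach is the paper's: isolate one large observation, control the rest, use exchangeability to multiply by $n$. Two corrections are needed.

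First, the deterministic claim ``$E_i\cap F_i \Rightarrow \hat e^{vr}_n < \mu-b$'' does not hold at the exact threshold $t(n)=b\sqrt{n/(2r(n))}$. If $\zeta_i$ is only infinitesimally above $t(n)$, your bookkeeping yields $\hat e^{vr}_n \le \mu - b + o(1)$, and the sign of the $o(1)$ is uncontrolled; so ``$\mu+o(1)-b(1+o(1))<\mu-b$ eventually'' is not justified. The obstacle you anticipate is real, but it lives in the threshold of $E_i$, not in the tolerance of $F_i$. The paper handles this by taking the big-jump event to be $\{\zeta_i > (1+\epsilon)\,t(n)\}$ for a fixed $\epsilon>0$, which gives $\hat e^{vr}_n \to \mu - b(1+\epsilon) < \mu - b$ on the event, and then sends $\epsilon\downarrow 0$ at the very end to recover the stated prefactor.

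Second, you appeal to regular variation for the ``almost disjoint'' step, but the proposition assumes only $\mb E[\zeta^2]<\infty$. Markov suffices: $\mb P[\zeta>t(n)]\le \mb E[\zeta^2]/t(n)^2 = O(r(n)/n)=o(1/n)$, so $n\,\mb P[\zeta>t(n)]\to 0$ and the probability that two observations exceed $t(n)$ is $o\big(n\,\mb P[\zeta>t(n)]\big)$. The paper in fact streamlines further: its ``typical'' event is simply $\{\zeta_j<\epsilon\,s_n,\ j\neq i\}$ rather than a WLLN event for partial sums, and it lower-bounds the sample variance via the one-term inequality $\hat\sigma_n^2 \ge (\zeta_i-\hat\mu_n)^2/n$, so it never needs to control $\sum_{j\neq i}\zeta_j^2$ or invoke $\mb E[\zeta^2]$ for the conditional step. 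Your expansion $\hat\sigma_n^2=\tfrac1n\sum_k\zeta_k^2-\hat\mu_n^2$ works too, but carries the extra burden of a WLLN for squares.
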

In particular, whenever $\mb P$ is regularly varying of index $\rho$ and $r(n) = n^{-\beta}, \beta \in (0,1)$, we see that $$\Pr[\hat e^{vr}_{n} < \mu- b] \geq  e^{- (1+o(1)) (\frac\rho 2 (1+\beta)-1) \log n }.$$
This slow polynomial decay contrasts with the exponential decay exhibited in case of the (truncated) sample mean, and our KL-DRO estimator in Section \ref{sec-kldro}; {\color{black}see also Figure~\ref{fig:conservatism} for a numerical illustration.}
Hence, although variance regularization offers more robust protection against heavy-tailed losses than empirical mean estimation, it seems to do so inefficiently in terms of the resulting estimation bias.

\section{The KL-DRO estimator}

\label{sec-kldro}
The previous section reviewed different methods to estimate the mean $\mu$, focusing on the (disappointment) probability of overestimating the reward and
the probability of being inefficient by a term $b>0$. 
None of the methods we discussed were able to reach both objectives in a purely data-driven way. In this section, we will introduce the KL-DRO estimator, which is entirely data-driven, and show that this estimator performs well on both metrics.
More importantly, we provide theoretical support for the claim that KL-DRO is, in a sense, optimal. 

The KL-DRO estimator is defined as the solution to the optimization problem
\begin{align}
  \label{eq:kl-predictor}
\hat e^{\KL}_{r}(\hat{\mb P}) \defn & \left\{
  \begin{array}{rl}
    \min & \int u \,\d \mb Q(u)\\
    \st & \hat {\mb P}\ll \mb Q, ~\int \log(\tfrac{\d \hat {\mb P}}{\d \mb Q}(u)) \d \hat{\mb P}(u) \leq r
  \end{array}\right.
\end{align}
where the constraint $\hat{\mb P} \ll \mb Q$ requires the distribution $\hat{\mb P}$ to be absolutely continuous with respect to $\mb Q$. We now show that this estimator satisfies the desired disappointment guarantee when its radius is scaled appropriately as
\(
r(n) \defn \lambda(n)/n
\)
and write $\hat e^{\KL}_{n}=\hat e^{\KL}_{r(n)}({\mb P}_n)$. 

\begin{theorem}
\label{prop-klub}
If $\lambda(n)/\log \log n\rightarrow\infty$, then
\begin{equation}
  \label{eq:kl-u-guarantee}
  {\mb P} [\hat e^{\KL}_{n} > \mu] \leq e^{-\lambda (n) (1+o(1))}.
    \end{equation}
\end{theorem}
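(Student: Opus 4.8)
The plan is to bound the disappointment probability $\mb P[\hat e^{\KL}_n > \mu]$ by exploiting the dual characterization of the KL-DRO estimator together with a sharpened version of the Cappé et al.\ concentration inequality for KL-divergence neighborhoods. First I would record the well-known dual form of \eqref{eq:kl-predictor}: by Lagrangian duality, the KL-DRO value equals
\[
  \hat e^{\KL}_{r}(\mb P_n) = \sup_{\eta \in \Re}\ \eta - \inf_{\alpha \ge 0}\left\{ \alpha r + \alpha \int \log\!\left(\tfrac{\alpha}{\alpha - (\eta - u)_+ \wedge (\text{something})}\right)\d\mb P_n(u) \right\},
\]
or, more usefully, the equivalent statement that $\hat e^{\KL}_r(\mb P_n) > \mu$ holds \emph{if and only if} the KL-ball of radius $r$ around $\mb P_n$ contains no distribution with mean $\le \mu$, which — since the lower-mean face of the simplex is characterized by putting mass at a single atom at $0$ and shifting — is equivalent to an explicit one-dimensional condition. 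Concretely, the event $\{\hat e^{\KL}_n > \mu\}$ is contained in the event that the empirical distribution $\mb P_n$ is ``KL-far'' from the set $\{\mb Q : \E{\mb Q}{\zeta} \le \mu\}$, i.e.\ $\inf_{\E{\mb Q}{\zeta}\le\mu} \mathrm{KL}(\mb P_n \| \mb Q) > r(n)$.

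The core step is then to upper bound the probability that this infimum exceeds $r(n)$. Here I would invoke Lemma~\ref{lemma:upper-bound-log-log-nonasymptotic} (the announced non-asymptotic improvement of the Cappé–Agrawal bound): it should give, for the relevant one-sided KL-projection functional $T_n := \inf_{\E{\mb Q}{\zeta}\le\mu} \mathrm{KL}(\mb P_n\|\mb Q)$, a tail bound of the shape $\mb P[T_n \ge x] \le C\,(\log n)^{?}\, e^{-nx}$ or, more precisely, $e^{-nx(1+o(1))}$ with a correction that is only polylogarithmic in $n$ — crucially, the overhead is of order $\log\log n$ in the exponent rather than $\log n$. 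Applying this with $x = r(n) = \lambda(n)/n$ yields
\[
  \mb P[\hat e^{\KL}_n > \mu] \le \mb P[T_n \ge r(n)] \le e^{-\lambda(n)}\cdot (\text{polylog factor}),
\]
and the polylog factor is absorbed into $e^{-\lambda(n)o(1)}$ precisely because of the hypothesis $\lambda(n)/\log\log n \to \infty$; if the overhead were instead $\log n$ we would need the stronger hypothesis $\lambda(n)/\log n \to\infty$, which explains the form of the assumption.

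The main obstacle — and the reason Lemma~\ref{lemma:upper-bound-log-log-nonasymptotic} is flagged as the technical heart of the paper — is establishing that sharpened concentration bound \emph{without} any moment assumption on $\zeta$ beyond $\mu < \infty$. The naive approach (Chernoff/Sanov bounding of $\mathrm{KL}(\mb P_n\|\mb Q)$) loses a factor polynomial in $n$ from the covering/net argument over the space of candidate $\mb Q$'s, or from the unboundedness of the log-likelihood-ratio when atoms of $\mb P_n$ are rare; controlling this requires a careful peeling/stratification argument over the magnitude of the empirical mass at the ``bad'' atoms, and it is here that the $\log\log n$ rather than $\log n$ overhead is won — essentially by a union bound over $O(\log n)$ dyadic scales, each contributing a geometrically decaying term, so the effective loss is the logarithm of the number of scales. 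I would carry out the argument by: (1) reducing to the one-atom worst case for the lower-mean constraint; (2) writing the KL-projection value explicitly as a function of the empirical tail sum $\frac1n\sum_i (\zeta_i - \mu)$ truncated appropriately; (3) peeling over the number of samples exceeding a growing threshold; and (4) summing the per-stratum Chernoff bounds. The delicate point throughout is that heavy tails make $\E{\mb P}{e^{s\zeta}}=\infty$ for all $s>0$, so every exponential-moment estimate must be replaced by a truncated-moment estimate, and one must check that the truncation level can be taken to grow with $n$ fast enough not to distort the rate but slow enough to keep the moment bounds finite — this balancing is exactly what the Karamata representation \eqref{eq:karamata-representation} of the slowly varying part is for.
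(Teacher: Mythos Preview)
Your high-level outline is right: the event $\{\hat e^{\KL}_n > \mu\}$ is contained in $\{\mathrm{KL}_{\inf}(\mb P_n,\mu) \ge r(n)\}$, you then invoke Lemma~\ref{lemma:upper-bound-log-log-nonasymptotic} to get a bound of the form $e^{-\lambda(n)}\cdot(\text{polylog})$, and the hypothesis $\lambda(n)/\log\log n\to\infty$ absorbs the polylog prefactor. That part matches the paper exactly.

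The gap is in your proposed proof of the lemma. You are missing the central device: the Honda--Takemura dual representation
\[
  \mathrm{KL}_{\inf}(\mb P_n,\mu) \;=\; \max_{0\le\alpha\le 1}\ \frac1n\sum_{k=1}^n \log\!\Bigl(1-\alpha\,\tfrac{\mu-\zeta_k}{\mu}\Bigr).
\]
At any \emph{fixed} $\alpha$, the Chernoff bound $\Pr[\phi(\alpha)\ge t]\le e^{-nt}\,\E\bigl[\prod_k(1-\alpha(\mu-\zeta_k)/\mu)\bigr]$ collapses to $e^{-nt}$ because each factor has expectation exactly $1$, using only $\E[\zeta]=\mu$. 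No exponential moment, no truncation, and no Karamata representation is needed here; your belief that ``every exponential-moment estimate must be replaced by a truncated-moment estimate'' is precisely what this dual form sidesteps. The regular-variation machinery appears only in Section~\ref{sec-proofoptimality}, for the optimality proof, and plays no role in this upper bound.

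The $\log\log n$ (rather than $\log n$) overhead is won not by peeling over sample magnitudes or dyadic counts of large samples, but by covering the \emph{dual interval} $[0,1]$ with a \emph{logarithmically spaced} grid $\{(1-1/\lambda(n))^l : l=1,\dots,m(n)\}$ with $m(n)=\lambda(n)\log n$, as opposed to the linear grid in Capp\'e et al. Concavity of $\alpha\mapsto\phi(\alpha)$ and the secant inequality reduce the supremum on each subinterval to a single gridpoint (losing a factor $1-1/\lambda(n)$ in the exponent), and the union bound over $m(n)$ gridpoints produces the prefactor $e\lambda(n)\log n$; after logarithms this contributes $\log\lambda(n)+\log\log n$, which is $o(\lambda(n))$ under the stated hypothesis. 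Your sketch---reduction to a one-atom worst case, peeling over sample thresholds---does not obviously yield this sharper overhead, and more importantly does not exploit the exact-mean cancellation that makes the Chernoff step go through distribution-free.
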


{\color{black} We illustrate the exponential decay of the KL-DRO disappointment probability in Figure~\ref{fig:mean-vs-kl} (Section~\ref{sec:numerics}).
We prove this result in Section \ref{sec-proofub}, by deriving the non-asymptotic bound
\begin{equation}
  \label{eq-ub-nice}
  {\mb P} [\hat e^{\KL}_{n} > \mu] \leq (e \lambda (n) \log n + e^2)  e^{-\lambda (n)}
\end{equation}
for any $\mb P \in \mc P$ and $n\geq 2$ so that $\lambda(n)> 1$.} In fact, the previous inequality implies a slightly stronger uniform guarantee
\(
  \sup_{\mb P\in \mc P} {\mb P} [\hat e_n > \E{\mb P}{\zeta}] = e^{-\lambda (n) (1+o(1))}.
\)
This non-asymptotic bound which holds for any $\lambda(n)>1$, formally presented in Lemma \ref{lemma:upper-bound-log-log-nonasymptotic}, is in line with similar results in the literature (\cite{agrawal2021regret}, \cite{cappe2013kullback}), where the term in front of the exponent $e^{-\lambda (n)}$ involves a term which is linear in $n$. Our bound is therefore sharper if $\lambda (n) \log n$ increases at a slower-than-linear rate. If we would have applied the results from 
\cite{agrawal2021regret}, \cite{cappe2013kullback}
directly, we should have made the more stringent assumption $\lambda(n)/\log n\rightarrow \infty$, ruling out disappointment probabilities that decrease at polynomial rate. We prove Theorem \ref{prop-klub} by modifying the argument in \cite{cappe2013kullback}, as is explained in more detail in Section \ref{sec-proofub}.

Our main result is that KL-DRO is in a sense optimal:  

\begin{theorem}
\label{thm-klopt}
Consider an estimator $\hat e_n=\hat e_n({\mb P}_n)$ which satisfies
\begin{equation}
  \label{eq:uniform-exponential-guarantees}
  \sup_{\mb P\in \mc P} {\mb P} [\hat e_n > \E{\mb P}{\zeta}] = e^{-\lambda (n) (1+o(1))}.
\end{equation}
That is, the same guarantee \eqref{eq-ub-nice} as the Kullback-Leibler estimator for some $\lambda(n)\rightarrow\infty$. 
 Then, for any regularly varying $\zeta$ we have
  \[
    \liminf_{n\to\infty} {\mb P}[\hat e_n \leq \hat e^\KL_{r'(n)}(\mb P_n)]  = 1,
  \]
  for any radius $r'(n)$ so that $\lim_{n\to\infty}\tfrac{r'(n) n}{\lambda(n)} \in (0, 1)$.
\end{theorem}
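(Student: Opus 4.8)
The plan is to argue by contradiction, exploiting the interplay between the feasibility guarantee \eqref{eq:uniform-exponential-guarantees} and the dual structure of KL-DRO. Suppose the conclusion fails, so that along a subsequence $\Pr[\hat e_n > \hat e^{\KL}_{r'(n)}(\mb P_n)]$ stays bounded away from $0$. The idea is that an estimator $\hat e_n$ that beats KL-DRO (with the slightly smaller radius $r'(n) < r(n)$) on a non-negligible event must, on that event, report a value strictly larger than the KL-DRO value at a \emph{bigger} radius than $r'(n)$ but still smaller than $r(n)$; one then cooks up an adversarial distribution $\mb P^\star$ under which this very event forces $\hat e_n > \E{\mb P^\star}{\zeta}$ with probability exceeding $e^{-\lambda(n)(1+o(1))}$, contradicting the uniform guarantee \eqref{eq:uniform-exponential-guarantees}.

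Concretely, I would proceed as follows. First, recall the dual reformulation of \eqref{eq:kl-predictor}: by standard KL-DRO duality (as in \cite{vanparys2021data, bennouna2021learning}), $\hat e^{\KL}_{r}(\mb P_n) = \sup_{\alpha \ge 0}\, \bigl(-\alpha \log \E{\mb P_n}{e^{-\zeta/\alpha}} - \alpha r\bigr)$, an expression monotone decreasing in $r$; write $\alpha_n$ for the optimal dual multiplier at radius $r'(n)$. Second, I would use Proposition \ref{lemma:variance-bound} (the asymptotic analysis of the variance of the dual solution via regular variation) together with the radius condition $\lim r'(n)n/\lambda(n) \in (0,1)$ to control the behaviour of $\alpha_n$ and to show that the KL-DRO value at radius $r'(n)$ concentrates at $\mu$ at a precise rate — in particular that $\mu - \hat e^{\KL}_{r'(n)}(\mb P_n)$ is of a known order on the bulk of the sample space. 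Third, conditional on the bad event $\{\hat e_n > \hat e^{\KL}_{r'(n)}(\mb P_n)\}$, I would construct the adversary $\mb P^\star$ by tilting $\mb P$ (an exponential/KL tilt of the Cramér type) so that $\E{\mb P^\star}{\zeta}$ is pushed up to just below the typical value of $\hat e_n$ on that event; the KL-cost of this tilt, by Sanov/large-deviation accounting, is at most $r'(n)n(1+o(1)) < \lambda(n)(1+o(1))$, because of the strict inequality $\lim r'(n)n/\lambda(n) < 1$. Fourth, under $\mb P^\star$ the bad event still has probability at least $e^{-r'(n)n(1+o(1))}$ by the large-deviation change-of-measure bound, and on it $\hat e_n > \E{\mb P^\star}{\zeta}$, so $\Pr_{\mb P^\star}[\hat e_n > \E{\mb P^\star}{\zeta}] \ge e^{-r'(n)n(1+o(1))} \gg e^{-\lambda(n)(1+o(1))}$, contradicting \eqref{eq:uniform-exponential-guarantees}.

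The main obstacle I anticipate is the third and fourth steps: making the change-of-measure argument rigorous in the heavy-tailed regime where $\mb P$ is merely regularly varying. One cannot simply invoke Cramér's theorem since the moment generating function of $\zeta$ need not be finite; instead the tilt must act only through $e^{-s\zeta}$ (which is always integrable for $s \ge 0$, exactly as in \eqref{sample-mean-left-tail}), so the adversary raises the mean by \emph{thinning} the left part of the distribution rather than inflating the right tail. Verifying that this restricted family of tilts can raise $\E{}{\zeta}$ by the required amount $\mu - \hat e^{\KL}_{r'(n)}(\mb P_n) + o(\cdot)$ at KL-cost $r'(n)n(1+o(1))$ — and that the bad event is not destroyed by the tilt — is precisely where the dual characterization of KL-DRO and the regular-variation estimates of Proposition \ref{lemma:variance-bound} must be combined carefully. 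A secondary technical point is handling the $o(1)$ in the exponent of \eqref{eq:uniform-exponential-guarantees} uniformly over $\mb P \in \mc P$, which is what forces the \emph{strict} gap $\lim r'(n)n/\lambda(n) < 1$ and lets the contradiction close.
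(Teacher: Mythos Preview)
Your overall architecture --- contradiction plus change of measure, with the KL cost of the tilt played against the disappointment rate $\lambda(n)$ --- matches the paper. But the tilt goes the wrong way in your description. On the bad event $\{\hat e_n > \hat e^{\KL}_{r'(n)}(\mb P_n)\}$ the estimator $\hat e_n$ typically sits \emph{below} $\mu$ (it need only exceed the KL-DRO value, which is $\le \hat\mu_n$), so to force $\hat e_n > \E{\mb P^\star}{\zeta}$ the adversary must have mean \emph{smaller} than $\mu$, not larger. Your own observation that the only admissible tilts are of the form $e^{-s\zeta}$ is consistent with this --- such tilts down-weight large values and \emph{lower} the mean --- so the paragraph about ``raising the mean by thinning the left part'' is internally inconsistent. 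In the paper the adversary is not an ad hoc Cram\'er tilt but precisely the KL-DRO minimizer $\mb Q^{r(n)}$ in \eqref{eq:kl-predictor} evaluated at the \emph{population} $\mb P$, with an intermediate radius $r(n)=(1-\delta/2)\lambda(n)/n$ strictly between $r'(n)$ and $\lambda(n)/n$; by construction $\E{\mb Q^{r(n)}}{\zeta}=\hat e^{\KL}_{r(n)}(\mb P)=:\mu^{r(n)}$ and $\KL(\mb P,\mb Q^{r(n)})=r(n)$, which is exactly the KL budget needed in the change-of-measure lower bound.

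The second gap is the bridge between the bad event and the overshoot event $\{\hat e_n>\mu^{r(n)}\}$. Saying ``KL-DRO concentrates at $\mu$'' is not enough: what is actually required is that $\hat e^{\KL}_{r'(n)}(\mb P_n)\ge \mu^{r(n)}$ on a set of $\mb P$-probability tending to $1$. The paper gets this via Donsker--Varadhan, which yields the deterministic bound ${\rm KL}_{\inf}(\mb P_n,\mu^{r(n)})\ge r(n)+\int \eta^{r(n)}\,\d(\mb P_n-\mb P)$ with $\eta^{r(n)}=\log(\d\mb P/\d\mb Q^{r(n)})$; the fluctuation $\int \eta^{r(n)}\,\d(\mb P_n-\mb P)$ is then controlled by Chebyshev, and \emph{this} is where Proposition~\ref{lemma:variance-bound} enters, giving $\V{\mb P}{\eta^{r(n)}(\zeta)}=O(r(n))$. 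Finally, the dual you wrote down, $\sup_{\alpha\ge 0}\bigl(-\alpha\log\E{\mb P_n}{e^{-\zeta/\alpha}}-\alpha r\bigr)$, is the dual for the ball $\{\KL(\mb Q,\mb P_n)\le r\}$, not the ball $\{\KL(\mb P_n,\mb Q)\le r\}$ used in \eqref{eq:kl-predictor}; the correct dual (Proposition~\ref{lemma:finite:dual}) involves $\int\log(\alpha+u)\,\d\mb P_n(u)$ and is what produces the explicit form $\eta^{r(n)}(u)=\log\bigl((\alpha^{r(n)}+u)/\nu^{r(n)}\bigr)$ needed above.
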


Informally, the previous result says that any competing estimator 
must be at least as conservative as a slightly aggressively scaled KL-DRO formulation whenever $\zeta$ is sufficiently well behaved. In particular, it holds for any regularly varying $\zeta$, and as we discuss in Section \ref{sec-proofoptimality}, extends straightforwardly to bounded $\zeta$ too.

We prove this result in Section \ref{sec-proofoptimality}, using a dual representation of KL-DRO at the distribution $\mb P$. In this dual representation, the Radon-Nikodym derivative between the distribution $\mb P$ and the maximizer characterizing the KL-DRO estimator $e^{\KL}_{r}(\mb P)$ in (\ref{eq:kl-predictor}) plays a prominent role.  A key step (Proposition \ref{lemma:variance-bound}) is to show that the variance of its logarithm is of the order $r(n)$. For this, we have to find appropriate decompositions of various terms that make up this variance, and estimate these terms using techniques from the theory of regularly varying functions. The proof of Theorem \ref{thm-klopt} itself is by deriving a contradiction: we show that an estimator performing better than KL-DRO with positive probability cannot be feasible, which is done by using an exponential change-of-measure argument.

{\color{black}
  Theorem \ref{thm-klopt} is a \emph{relative} result and indicates that KL-DRO is the least conservative estimator enjoying the guarantee \eqref{eq:uniform-exponential-guarantees}.
  The following Proposition \ref{prop-kllb} will provide a complementary \emph{absolute} statement: KL-DRO does not pathologically underestimate $\mu$.
   KL-DRO reduces to the sample mean for the special case $r=0$. Hence, KL-DRO is smaller than the sample mean whenever $r>0$.
  We furthermore remark that also the TV-DRO estimates $\hat e^{\TV}_{n}=\hat e^{\TV}_{n, \gamma(n)}$ with $\gamma(n)=\sqrt{r(n)/2}$ also satisfy Equation (\ref{eq:uniform-exponential-guarantees}) as they are more conservative than the KL-DRO estimates, i.e., we have from Pinsker's inequality \citep{csiszar2011information} that $\hat e^{\TV}_{n}\leq \hat e^{\KL}_{n}$.
 The probability that either DRO estimates are conservative by a term $b>0$, surprisingly, has a similar behavior to that of the sample mean:
 \begin{proposition}
   \label{prop-kllb}
   If $b>0$ and $r(n)=\lambda(n)/n\rightarrow 0$, then 
   \begin{equation}
     {\mb P} [\hat e^{\KL}_{n} < \mu-b] \leq {\mb P} [\hat e^{\TV}_{n} < \mu-b] \leq e^{- n I(b)(1+o(1))}.
   \end{equation}
 \end{proposition}
}
The proof of this proposition can be found in Appendix \ref{sec-proofs}.

If the random rewards $\zeta_i$'s have bounded support, it is shown in \cite{bennouna2021learning} that KL-DRO and variance regularization are equivalent as $n\rightarrow\infty$. Comparing Propositions \ref{prop-variance-lb} and \ref{prop-kllb} shows this is no longer the case under heavy-tailed data, as KL-DRO behaves better. In particular, KL-DRO resists outliers in a more effective (less conservative) way than variance regularization.

The above two results show that KL-DRO has good statistical properties ``in both directions'': it is safe with high probability, but also not too conservative with high probability, as long as $r(n)\rightarrow 0$. {\color{black} Both properties are illustrated numerically in Figures~\ref{fig:mean-vs-kl} and~\ref{fig:conservatism} (Section~\ref{sec:numerics}).}

{\color{black}
\begin{remark}[General Divergence Estimators]
\label{remark:diverg-estim}

The KL divergence belongs to the broader family of $f$-divergence estimators, which replace the KL ball in Equation~\eqref{eq:kl-predictor} with $\set{\mb Q}{D_f(\mb P_n,\mb Q)=f((\d\mb Q/\d \mb P_n)(u))\d \mb P_n(u) \leq r}$ for a convex function $f:\Re_+\to\Re_+\cup\{+\infty\}$ with $f(1)=0$.
When $f(0)<\infty$, as for the total variation distance ($f(t)=|t-1|/2$, already discussed above) or the Pearson divergence ($f(t)=(t-1)^2/2$, studied as a convex proxy to variance regularization by \citet{duchi2019variance}), the estimator can zero out extreme observations \citep{love2015phi}, providing natural protection against heavy-tailed outliers.
Indeed, this outlier-suppression property is the mechanism underlying the feasibility of TV-DRO established above, as it plays a central role in the proof of Proposition~\ref{prop-kllb}.

The KL divergence, with $f(t)=-\log(t)+(t-1)$ and $f(0)=+\infty$, cannot suppress outliers in this way.
Its optimality in our framework stems from a different and deeper property: the KL divergence is the rate function in Sanov's theorem, the large deviations principle for empirical distributions \citep{dembo2009large}.
It is precisely this connection that drives Theorems \ref{prop-klub} and \ref{thm-klopt}. While other $f$-divergences with $f(0)<\infty$, such as TV, give feasible estimators, Theorem \ref{thm-klopt} shows that any such estimator is at least as conservative as KL-DRO, so KL-DRO is optimal in our setting.

\end{remark}
}

{\color{black}
\section{A Numerical Illustration}
\label{sec:numerics}

We illustrate our theoretical findings on a simple concrete example in which $\zeta$ is independent and Pareto distributed with scale parameter $x_m=1$ and shape parameter $\alpha=3$, i.e., $\Pr[\zeta>x] = (1/x)^\alpha$ for all $x\geq 1$.
This distribution is regularly varying of index $\rho=\alpha=3$ with mean $\mu = \E{}{\zeta} = 3/2$, variance $\sigma^2 = 3/4$, and second moment $\E{}{\zeta^2} = 3 < \infty$.

\paragraph{Disappointment probability.}
Figure~\ref{fig:mean-vs-kl} compares the disappointment probability $\Pr[\hat e_n > \mu]$ for the deflated sample mean estimator $\hat e_{n,\Delta}$ with $\Delta=0.096$, the Wasserstein DRO estimator $\hat e^\W_{n,\Delta}$, and the KL-DRO estimator $\hat e^{\KL}_{r}(\mb P_n)$ with $r=0.011$, as a function of sample size $n$, using Monte Carlo simulation.
As discussed in Section \ref{sec-challenges}, the disappointment probability of the deflated sample mean and Wasserstein DRO estimators coincide and decays subexponentially (sublinearly on a logarithmic scale) due to their catastrophic sensitivity to outliers, while the KL-DRO estimator achieves an exponential decay at rate $\exp(-rn)$ despite having the same asymptotic bias as the deflated sample mean and Wasserstein estimates.

\begin{figure}[h]
    \centering
    \begin{subfigure}{0.45\textwidth}
\begin{tikzpicture}
\begin{axis}[
  scale=0.9,
  xlabel={$n$},
  title={Disappointment $\mathbb P[\hat e_n > \mu]$},
  grid=both,
  ymode=log,
  ymin=1e-5,
  xmin=0,
  xmax=1000,
  legend to name=meankllegend,
  legend columns=4,
  legend style={/tikz/every even column/.append style={column sep=0.5cm}}
  ]

  \addplot+[mark=none] table[
  col sep=comma,
  x=n,
  y=disappointment
  ] {figures/kl-vs-mean/kl-vs-mean-kl-mean.csv};
  \addlegendentry{KL-DRO}

  \addplot+[mark=none] table[
  col sep=comma,
  x=n,
  y=disappointment
  ] {figures/kl-vs-mean/kl-vs-mean-wasserstein-mean.csv};
  \addlegendentry{W-DRO}

  \addplot+[mark=none] table[
  col sep=comma,
  x=n,
  y=disappointment
  ] {figures/kl-vs-mean/kl-vs-mean-inflated-mean.csv};
  \addlegendentry{Mean ($\hat\mu_n - \Delta$)}

  \addplot+[mark=none, black, densely dotted] table[
  col sep=comma,
  x=n,
  y=rate
  ] {figures/kl-vs-mean/rate.csv};
  \addlegendentry{$\exp(-r n)$}

\end{axis}
\end{tikzpicture}
\caption{Disappointment}
    \label{fig:mean-vs-kl-d}
  \end{subfigure}
  \hfill
    \begin{subfigure}{0.45\textwidth}
\begin{tikzpicture}
\begin{axis}[
  scale=0.9,
  xlabel={$n$},
  title={Bias $\mu-\E{}{\hat e_n}$},
  grid=both,
  ymin=0,
  xmin=0,
  xmax=1000
  ]

  \addplot+[mark=none] table[
  col sep=comma,
  x=n,
  y=b
  ] {figures/kl-vs-mean/kl-vs-mean-kl-mean.csv};

  \addplot+[mark=none] table[
  col sep=comma,
  x=n,
  y=b
  ] {figures/kl-vs-mean/kl-vs-mean-wasserstein-mean.csv};

  \addplot+[mark=none] table[
  col sep=comma,
  x=n,
  y=b
  ] {figures/kl-vs-mean/kl-vs-mean-inflated-mean.csv};

\end{axis}
\end{tikzpicture}
  \caption{Bias}
    \label{fig:mean-vs-kl-b}
  \end{subfigure}

  \medskip
  \pgfplotslegendfromname{meankllegend}
  \caption{Three mean estimators on Pareto data, all calibrated to share the same bias $\Delta$. The deflated mean ($\hat\mu_n-\Delta$) and W-DRO estimators have constant bias and their disappointment probability decays sublinearly (on a logarithmic scale). The KL-DRO estimator shares the same asymptotic bias but achieves an exponential decay in disappointment probability at rate $r$.}
  \label{fig:mean-vs-kl}
\end{figure}

\paragraph{Conservatism probability.}

We design four estimators to be feasible in (\ref{eq:exponential-disappointment-rate}) for $\lambda(n)=\sqrt{n}/10$:
\begin{itemize}
\item The truncated mean estimator $\hat e_n^{a,A}$ feasible due to Proposition \ref{prop-trim-mean} using $(a,A)=(2,3)$.
\item The variance regularized estimator $\hat e_n^{vr}$ feasible due to Proposition \ref{prop-variance-ub}.
\item The KL-DRO estimator $\hat e_n^{\KL}$ feasible due to Theorem \ref{prop-klub}.
\item The TV-DRO estimator $\hat e^{\TV}_{n}$ feasible due to $\hat e^{\TV}_{n}\leq \hat e^{\KL}_{n}$; see discussion preceding Proposition \ref{prop-kllb}.
\end{itemize}
Figure~\ref{fig:conservatism} compares the probability $\Pr[\hat e_n < \mu - b]$ of underestimating the mean by more than $b=0.3$ for several estimators.
For reference, we also provide the conservatism probability $\Pr[\hat \mu_n < \mu - b]$ of the empirical mean which as we noted in Section \ref{sec:sample-mean-pred} is not feasible in (\ref{eq:exponential-disappointment-rate}). By Cram\`er's theorem the empirical mean is conservative by an amount $b=0.3$ with probability which decays exponentially at rate $I(b)=0.19$.
As established in Proposition~\ref{prop-kllb}, both the KL-DRO and TV-DRO estimators match this exponential rate, while variance regularization decays only sublinearly as established in Proposition~\ref{prop-variance-lb}.
The truncated mean estimator $\hat e_n^{a,A}$ also achieves the Cram\`er rate $I(b)$ asymptotically (by Lemma~\ref{lemma-truncatedlefttail}), though this is only apparent at substantially larger sample sizes than those shown due to the slow vanishing of its bias correction term.
As is clear from the results in Figure \ref{fig:conservatism}, the KL-DRO estimator also appears empirically less conservative than the TV-DRO estimator.

\begin{figure}[ht]
  \centering
\begin{tikzpicture}
\begin{axis}[
  scale=1,
  xlabel={$n$},
  ylabel={$\mathbb P[\hat e_n < \mu - b]$},
  grid=both,
  ymode=log,
  legend style={
    at={(1.02,1)},
    anchor=north west,
    cells={anchor=west}
  },
  ymin=3e-5,
  xmin=0,
  xmax=4000
  ]

  \addplot+[mark=none] table[
  col sep=comma,
  x=n,
  y=disappointment
  ] {figures/kl-vs-var/kl-vs-var-tr-mean.csv};
  \addlegendentry{Truncated mean}

  \addplot+[mark=none] table[
  col sep=comma,
  x=n,
  y=disappointment
  ] {figures/kl-vs-var/kl-vs-var-var-reg-mean.csv};
  \addlegendentry{Variance regularization}

  \addplot+[mark=none] table[
  col sep=comma,
  x=n,
  y=disappointment
  ] {figures/kl-vs-var/kl-vs-var-kl-mean.csv};
  \addlegendentry{KL-DRO}

  \addplot+[mark=none] table[
  col sep=comma,
  x=n,
  y=disappointment
  ] {figures/kl-vs-var/kl-vs-var-tv-mean.csv};
  \addlegendentry{TV-DRO}
  
  \addplot+[mark=none, densely dotted] table[
  col sep=comma,
  x=n,
  y=disappointment
  ] {figures/kl-vs-var/kl-vs-var-mean.csv};
  \addlegendentry{Mean ($\hat \mu_n$)}

  \addplot+[mark=none, densely dotted] table[
  col sep=comma,
  x=n,
  y=rate
  ] {figures/kl-vs-var/rate.csv};
  \addlegendentry{Rate ($\exp(-n I(b))$)}
  
\end{axis}
\end{tikzpicture}
  \caption{Conservatism probability $\Pr[\hat e_n < \mu - b]$ with $b=0.3$ for five estimators on Pareto($\alpha=3$) data with $\lambda(n)=\sqrt{n}/10$. The KL-DRO and TV-DRO estimators match the Cram\`er rate $\exp(-nI(b))$ of the sample mean (linearly on a logarithmic scale). The truncated mean also achieves this rate asymptotically but converges slowly. Variance regularization decays only sublinearly (subexponentially).}
  \label{fig:conservatism}
\end{figure}
}

\section{Proof of Theorem \ref{prop-klub}}
\label{sec-proofub}

{\color{black}
We first give a proof sketch to aid the reader; the complete proof follows below.
The proof proceeds in three steps.
First, we reduce the problem to bounding $\Pr[{\rm KL}_{\inf}(\mb P_n,\mu)\geq r(n)]$, where
\[
  {\rm{KL}}_{\inf}(\mb P_n, \mu) = \inf \set{\KL(\mb P_n, \mb Q)}{\mb Q\in \mc P, ~\E{\mb Q}{\zeta}\leq \mu}
\]
is the infimal KL divergence from $\mb P_n$ to the set of distributions with mean at most $\mu$. By definition of $\hat e^{\KL}_{r(n)}$, we have
\begin{equation*}
{\mb P}\left[\hat e^{\KL}_{r(n)} > \mu \right] \leq
   {\mb P}\left[ {\rm{KL}}_{\inf}(\mb P_n, \mu) \geq r(n) \right].
\end{equation*}
Second, we exploit the dual representation of \citet{honda2012finite}, which expresses ${\rm KL}_{\inf}(\mb P_n,\mu)$ as the maximum of a concave function $\phi(\alpha)$ over $\alpha\in[0,1]$.
Third, we bound $\Pr[\max_{\alpha\in[0,1]}\phi(\alpha)\geq r(n)]$ by partitioning $[0,1]$ using a \emph{geometrically} spaced grid $\{(1-\gamma(n))^l\}_{l=0}^{m(n)}$.
For each interior interval $[(1-\gamma(n))^l,(1-\gamma(n))^{l-1}]$, concavity of $\phi$ and the secant inequality reduce the maximum over the interval to the value at its left endpoint $(1-\gamma(n))^l$, up to the factor $(1-\gamma(n))$; a Chernoff bound then contributes $e^{-n(1-\gamma(n))r(n)}$.
For the tail interval $[0,(1-\gamma(n))^{m(n)}]$, we appeal to \citet[Lemma 7]{cappe2013kullback} followed by another Chernoff bound.
Choosing $\gamma(n)=1/\lambda(n)$ and $m(n)=\lambda(n)\log n$ balances the two contributions and yields the bound in Lemma~\ref{lemma:upper-bound-log-log-nonasymptotic}.
The geometric spacing is the key improvement over \citet{cappe2013kullback}, who use a linear grid: with a geometric grid the number of intervals needed to reach $(1-\gamma(n))^{m(n)}\leq 1/2$ is only $m(n)=O(\lambda(n)\log n)$, giving a prefactor $O(\lambda(n)\log n)$ in the bound rather than $O(n)$.
}

\begin{lemma}
    \label{lemma:upper-bound-log-log-nonasymptotic}
   For every $m(n)$ such that
 $(1-1/\lambda(n))^{m(n)}\leq 1/2$ and $\lambda (n) > 1$, we have, with $r(n) = \lambda (n)/n$,
     \begin{equation}
     \label{eq-klub-hard}
        \Pr\left[{\rm{KL}}_{\inf}(\mb P_n, \mu)\geq r(n)\right] \leq e^{-\lambda (n)} \left[m(n) e+ \exp(2n e^{-m(n)/\lambda (n)}) \right].
    \end{equation}
    In particular, if additionally $n\geq 2$ we can take $m(n) = \lambda (n) \log n$ to get
   \begin{equation}
   \label{eq-klub-simple}
        \Pr\left[{\rm{KL}}_{\inf}(\mb P_n, \mu)\geq r(n)\right] \leq e^{-\lambda (n)} \left[ e  \lambda (n) \log n + e^2 \right].
    \end{equation}
    
      \end{lemma}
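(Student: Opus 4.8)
The plan is to bound the probability $\Pr[{\rm KL}_{\inf}(\mb P_n,\mu)\ge r(n)]$ appearing on the right above by exploiting a variational dual representation of ${\rm KL}_{\inf}$, a one‑parameter Chernoff bound whose moment generating function is controlled precisely because $\E{\mb P}{\zeta}=\mu$, and a geometric net in the dual parameter with a peeling step near its boundary. First I would record the Owen/empirical‑likelihood dual: since $\zeta\ge 0$, and off the event that the harmonic mean of $\zeta_1,\dots,\zeta_n$ is at least $\mu$ — a Cram\`er‑type large deviation, hence exponentially rare since $\E{\mb P}{1/\zeta}\ge 1/\E{\mb P}{\zeta}=1/\mu$ — one has
\[
  {\rm KL}_{\inf}(\mb P_n,\mu)\;=\;\max_{0\le\lambda<1/\mu}\ \frac1n\sum_{i=1}^n \log\!\big(1+\lambda(\zeta_i-\mu)\big),
\]
and the rare event is simply added to the final bound. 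The decisive point is that for every $\lambda\in[0,1/\mu)$ one has $1+\lambda(\zeta-\mu)>0$ $\mb P$‑a.s.\ and therefore $\E{\mb P}{1+\lambda(\zeta-\mu)}=1$, an exact identity.

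For a fixed $\lambda\in[0,1/\mu)$, Markov's inequality applied to $\prod_i(1+\lambda(\zeta_i-\mu))$ gives, for any $s$, $\Pr[\frac1n\sum_i\log(1+\lambda(\zeta_i-\mu))\ge s]\le e^{-ns}(\E{\mb P}{1+\lambda(\zeta-\mu)})^n=e^{-ns}$, which at $s=r(n)-1/n$ equals $e\,e^{-\lambda(n)}$. I would then cover $[0,1/\mu)$ by a net $\lambda_1<\dots<\lambda_{m(n)}$ that is geometric in the variable $1-\lambda\mu$ with ratio $1-1/\lambda(n)$, so that $1-\lambda_{m(n)}\mu\le(1-1/\lambda(n))^{m(n)}$; the hypothesis $(1-1/\lambda(n))^{m(n)}\le 1/2$ is exactly what makes the net reach deep enough into $[0,1/\mu)$. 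A calculation using concavity of $\lambda\mapsto\frac1n\sum_i\log(1+\lambda(\zeta_i-\mu))$ together with the geometric spacing then shows that, on the event $\{{\rm KL}_{\inf}(\mb P_n,\mu)\ge r(n)\}$ whose maximizer $\lambda^\star$ lies at or below $\lambda_{m(n)}$, some net point attains value at least $r(n)-1/n$; a union bound over the $m(n)$ points contributes $m(n)\,e\,e^{-\lambda(n)}$.

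On the complementary event $\lambda^\star>\lambda_{m(n)}$, put $\epsilon:=(1-1/\lambda(n))^{m(n)}$, so $1-\lambda^\star\mu\le\epsilon$; monotonicity of each $\lambda\mapsto\log(1+\lambda(\zeta_i-\mu))$ gives $\sup_{\lambda_{m(n)}\le\lambda<1/\mu}\log(1+\lambda(\zeta_i-\mu))\le\log(\zeta_i/\mu+\epsilon)$, hence ${\rm KL}_{\inf}(\mb P_n,\mu)\le\frac1n\sum_i\log(\zeta_i/\mu+\epsilon)$ on this event, and one more Markov step bounds the probability by $e^{-nr(n)}(\E{\mb P}{\zeta/\mu}+\epsilon)^n=e^{-\lambda(n)}(1+\epsilon)^n\le e^{-\lambda(n)}\exp(2n\epsilon)$, where $\epsilon\le e^{-m(n)/\lambda(n)}$ since $1-x\le e^{-x}$. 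Adding the net contribution, this sliver contribution, and the exponentially small harmonic‑mean event yields \eqref{eq-klub-hard}. For \eqref{eq-klub-simple}, with $n\ge 2$ take $m(n)=\lambda(n)\log n$: then $e^{-m(n)/\lambda(n)}=1/n$, so $(1-1/\lambda(n))^{m(n)}\le 1/n\le 1/2$ verifies the hypothesis and $\exp(2ne^{-m(n)/\lambda(n)})=e^2$; substituting gives the claim.

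The main obstacle is the net step: showing that $m(n)$ net points — rather than a number growing linearly in $n$, as in the related bounds of \cite{cappe2013kullback,agrawal2021regret} — suffice to control $\max_\lambda$ up to an additive $1/n$, while the near‑boundary regime $\lambda\uparrow 1/\mu$, where the summands $\log(1+\lambda(\zeta_i-\mu))$ can vary arbitrarily fast, is thinned out geometrically and absorbed into the small term $\exp(2ne^{-m(n)/\lambda(n)})e^{-\lambda(n)}$. Quantifying this trade‑off carefully is exactly what upgrades the requirement from $\lambda(n)\gg\log n$ — which a direct application of \cite{cappe2013kullback,agrawal2021regret} would force — to the weaker $\lambda(n)\gg\log\log n$.
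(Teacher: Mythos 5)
Your high-level architecture matches the paper's: the Honda--Takemura dual representation $\mathrm{KL}_{\inf}(\mb P_n,\mu)=\max_\alpha \tfrac1n\sum_i\log(1-\alpha(\mu-\zeta_i)/\mu)$, a per-grid-point Chernov bound exploiting $\E{\mb P}{1-\alpha(\mu-\zeta)/\mu}=1$, a geometric net of $m(n)$ points, and a separate treatment of the residual sliver. The numerics also line up (per-point bound $e\cdot e^{-\lambda(n)}$, residual $e^{-\lambda(n)}\exp(2n\epsilon)$). As a side remark, the ``harmonic-mean rare event'' you append is unnecessary: the dual representation you cite holds unconditionally for $\zeta\geq 0$, and in any case the paper has no such extra term in its right-hand side, so adding one would already break the claimed inequality.

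The genuine gap is in the net step, which you describe as ``a calculation using concavity together with the geometric spacing'' without actually carrying it out, and which does not survive inspection with your choice of grid. You place the grid geometrically in $1-\lambda\mu$, i.e.\ $\alpha_l := \lambda_l\mu = 1-(1-1/\lambda(n))^l$, which is \emph{fine near $\alpha=1$ and coarse near $\alpha=0$}, and you put the residual sliver near $\alpha=1$. But the only concavity argument available here uses the secant from $(0,\phi(0))=(0,0)$: for concave $\phi$ with $\phi(0)=0$ and $\phi(\alpha^\star)\geq 0$, one has $\phi(\alpha_l)\geq (\alpha_l/\alpha^\star)\,\phi(\alpha^\star)$ when $\alpha_l\leq\alpha^\star$. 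To get from $\phi(\alpha^\star)\geq r(n)$ to $\phi(\alpha_l)\geq r(n)-1/n = (1-1/\lambda(n))\,r(n)$ one therefore needs the multiplicative gap $\alpha_l/\alpha_{l+1}$ to be at least $1-1/\lambda(n)$ on every cell. Your grid fails this badly near $\alpha=0$: for instance $\alpha_1/\alpha_2 = (1/\lambda)/(2/\lambda - 1/\lambda^2)\approx 1/2$, so the secant only yields $\phi(\alpha_1)\gtrsim r(n)/2$, which kills the bound. The paper instead takes $\alpha_l=(1-1/\lambda(n))^l$ --- geometric \emph{in $\alpha$ itself}, fine near $0$ and coarse near $1$ --- which makes $\alpha_l/\alpha_{l-1}=1-1/\lambda(n)$ exactly, and consequently the residual is the interval $[0,(1-1/\lambda(n))^{m(n)}]$ near $\alpha=0$, not near $\alpha=1$. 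There the function $\phi$ is small, and the paper controls it with the additive estimate $\max_{0\leq\alpha\leq\alpha'}\phi(\alpha)\leq\phi(\alpha')+2\alpha'$ from \citet[Lemma 7]{cappe2013kullback}, which relies precisely on the residual endpoint being at most $1/2$. Your sliver bound near $\alpha=1$ using $\log(\zeta_i/\mu+\epsilon)$ is itself fine, but it compensates the wrong end: once you correct the grid direction to make the net step work, the sliver is near $0$, and the mechanism you built for it no longer applies.
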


Using this lemma, we prove Theorem \ref{prop-klub}: 

\begin{proof}[Proof of Theorem  \ref{prop-klub}]
    Set $m(n) = \lambda(n) \log n $.
    As $\lambda(n)\rightarrow\infty$, for $n$ large enough, this choice of $\lambda(n)$ satisfies the condition needed for the inequality (\ref{eq-klub-simple}). 
Taking logarithms in   (\ref{eq-klub-simple}) and  using the inequality $a+b \leq 2 \max \{a, b\}$, we get 
\begin{equation}
    \frac 1{\lambda(n)} \log \Pr\left[{\rm{KL}}_{\inf}(\mb P_n, \mu)\geq r(n)\right] \leq    \frac 1{\lambda(n)} \left[-\lambda (n) + \log 2 + \max\{ 1 + \log \lambda (n) + \log \log n,  2\} \right],
\end{equation}
which converges to $-1$ since  $\lambda(n)/\log \log n\rightarrow\infty$.    
\end{proof}
        
  \begin{proof}[Proof of Lemma \ref{lemma:upper-bound-log-log-nonasymptotic}.]
    If $\zeta$ is degenerate at 0, the result is trivial,  so we exclude this case from now on. 
    Consider the concave function $\phi(\alpha, \zeta)\defn \log\left(1-\alpha \tfrac{(\mu-\zeta)}{\mu}\right)$.
    A straightforward dualization argument \citep[Equation 2]{honda2012finite} shows that
    \begin{equation}
    \label{eq-klinfrepresentation}
        {\rm{KL}}_{\inf}(\mb P_n, \mu) = \max_{0\leq \alpha\leq 1} \left\{\phi(\alpha) \defn  \textstyle \frac 1n \sum_{k=1}^n \phi(\alpha, \zeta_k) \right\} \geq 0.
    \end{equation}
          The remainder of this proof is inspired by the proof of \citet[Lemma 6]{cappe2013kullback} who employ the Chernoff bound based on partitioning the dual feasible set $[0,1]$ in (\ref{eq-klinfrepresentation}) using a linearly spaced grid. We instead propose to use a {\color{black} geometrically} spaced grid.

    Let $\gamma(n)\in (0, 1)$ and $m(n)\in \mb N$ so that $(1-\gamma(n))^{m(n)}\leq 1/2$.
    Using a simple union bound we can establish
    \begin{align*}
      \Pr\left[{\rm{KL}}_{\inf}(\mb P_n, \mu) \geq r(n)\right] \leq & \sum_{l=1}^{m(n)} \Pr\left[\max_{(1-\gamma(n))^l\leq \alpha \leq (1-\gamma(n))^{l-1}} \phi(\alpha) \geq r(n)\right] +  \Pr\left[\max_{0\leq \alpha\leq (1-\gamma(n))^{m(n)}} \phi(\alpha) \geq r(n)\right].
    \end{align*}
    We now study each of the terms separately in the upper bound. Observe that from the secant inequality applied to the concave function $\phi$ we have for any $(1-\gamma(n))^l\leq \alpha\leq (1-\gamma(n))^{l-1}$ that 
    \begin{align*}
      \phi((1-\gamma(n))^l)\geq & \frac{(1-\gamma(n))^l}{\alpha} \phi(\alpha) + \left(1-\frac{(1-\gamma(n))^l}{\alpha}\right)\phi(0) = \frac{(1-\gamma(n))^l}{\alpha} \phi(\alpha).
    \end{align*}
    If the event $\max_{(1-\gamma(n))^l\leq \alpha\leq (1-\gamma(n))^{l-1}} \phi(\alpha)\geq 0$ occurs, then clearly we have for $(1-\gamma(n))^l\leq \alpha\leq (1-\gamma(n))^{l-1}$ the inequality
    \begin{align*}
      \phi((1-\gamma(n))^l) \geq & \frac{(1-\gamma(n))^l}{\alpha} \phi(\alpha)\geq (1-\gamma(n)) \phi(\alpha).
    \end{align*}
    Hence, the event $\max_{(1-\gamma(n))^l\leq \alpha\leq (1-\gamma(n))^{l-1}} \phi(\alpha)\geq 0$ implies $\phi((1-\gamma(n))^l) \geq (1-\gamma(n)) \max_{(1-\gamma(n))^l\leq \alpha\leq (1-\gamma(n))^{l-1}}\phi(\alpha) $.
    
    Hence, we have for any $l\in [1,\dots, m(n)]$ that
    \begin{align*}
      & \Pr\left[\max_{(1-\gamma(n))^l\leq \alpha\leq (1-\gamma(n))^{l-1}} \phi(\alpha) \geq r(n)\right]\\
      =& \Pr\left[(1-\gamma(n))\max_{(1-\gamma(n))^l\leq \alpha\leq (1-\gamma(n))^{l-1}} \phi(\alpha) \geq (1-\gamma(n)) r(n),~\max_{(1-\gamma(n))^l\leq \alpha\leq (1-\gamma(n))^{l-1}} \phi(\alpha)\geq 0 \right]\\
      \leq & \Pr\left[\phi((1-\gamma(n))^l) \geq (1-\gamma(n)) r(n),~\max_{(1-\gamma(n))^l\leq \alpha\leq (1-\gamma(n))^{l-1}} \phi(\alpha)\geq 0 \right]\\
      \leq & \Pr\left[\phi((1-\gamma(n))^l) \geq (1-\gamma(n)) r(n)\right].
    \end{align*}
    By the Chernoff bound we have for any $l\in [1,\dots, m(n)]$ that
    \begin{align*}
      \Pr\left[\phi((1-\gamma(n))^l) \geq (1-\gamma(n)) r(n)\right] = & \Pr\left[\frac 1n \sum_{k=1}^n \log\left(1-(1-\gamma(n))^l \frac{\mu-\zeta_k}{\mu}\right) \geq (1-\gamma(n)) r(n)\right]\\
      \leq & \exp(-n(1-\gamma(n)) r(n))\E{\mb P}{\prod_{k=1}^n \left(1-(1-\gamma(n))^l \frac{\mu-\zeta_k}{\mu}\right)}\\
      = & \exp(-n(1-\gamma(n)) r(n))\prod_{k=1}^n \E{\mb P}{\left(1-(1-\gamma(n))^l \frac{\mu-\zeta_k}{\mu}\right)}\\
      = & \exp(-n(1-\gamma(n)) r(n)),
    \end{align*}
    where the first equality uses $\gamma(n)<1$ and the ultimate equality uses the fact that $\E{\mb P}{\zeta_k}=\mu$ for all $k$. We have shown that
    \[
      \sum_{l=1}^{m(n)} \Pr\left[\max_{(1-\gamma(n))^l\leq \alpha\leq (1-\gamma(n))^{l-1}} \phi(\alpha) \geq r(n)\right]\leq m(n) \exp(-n(1-\gamma(n)) r(n)).
    \]

    We now derive an upper bound on the last term $\Pr\left[\max_{0\leq \alpha\leq (1-\gamma(n))^{m(n)}} \phi(\alpha) \geq r(n)\right]$ based directly on the argument made in \citet[Lemma 6]{cappe2013kullback}.
    First, we remark that
    \(
    \tfrac{(\mu -\zeta)}{\mu}\leq 1
    \)
    for all $\zeta\geq 0$. Using $(1-\gamma(n))^{m(n)}\leq 1/2$ from \citet[Lemma 7]{cappe2013kullback} (with $\alpha = \lambda \leq \lambda'=(1-\gamma(n))^{m(n)}\leq 1/2$) we have 
    \begin{align*}
      \max_{0\leq \alpha\leq (1-\gamma(n))^{m(n)}} \phi(\alpha) = & \max_{0\leq \alpha\leq (1-\gamma(n))^{m(n)}} \frac 1n \sum_{k=1}^n \log\left(1-\alpha \frac{\mu-\zeta_k}{\mu}\right) \\
      \leq  & \frac 1n \sum_{k=1}^n \log\left(1-(1-\gamma(n))^{m(n)} \frac{\mu-\zeta_k}{\mu}\right) + 2 (1-\gamma(n))^{m(n)}\\
      \leq & \phi((1-\gamma(n))^{m(n)})+2(1-\gamma(n))^{m(n)}.
    \end{align*}
    Again exploiting the Chernoff bound we obtain
    \begin{align*}
      & \Pr\left[\phi((1-\gamma(n))^{m(n)}) \geq r(n)-2 (1-\gamma(n))^{m(n)} \right]\\
      = & \Pr\left[\frac 1n \sum_{k=1}^n \log\left(1-(1-\gamma(n))^{m(n)} \frac{\mu-\zeta_k}{\mu}\right) \geq r(n)-2 (1-\gamma(n))^{m(n)} \right]\\
      \leq & \exp(-n (r(n)- 2 (1-\gamma(n))^{m(n)}))\E{\mb P}{\prod_{k=1}^n \left(1-(1-\gamma(n))^{m(n)} \frac{\mu-\zeta_k}{\mu}\right)}\\
      = & \exp(-n (r(n)- 2 (1-\gamma(n))^{m(n)}))\prod_{k=1}^n \E{\mb P}{\left(1-(1-\gamma(n))^{m(n)} \frac{\mu-\zeta_k}{\mu}\right)}\\
      = & \exp(-n (r(n)- 2 (1-\gamma(n))^{m(n)}))
    \end{align*}
    from which we can conclude
    \begin{equation}
        \Pr\left[{\rm{KL}}_{\inf}(\mb P_n, \mu)\geq r(n)\right] \leq m(n) \exp(-n(1-\gamma(n)) r(n))+\exp(-n (r(n)- 2 (1-\gamma(n))^{m(n)})).
    \end{equation}
   The inequality (\ref{eq-klub-hard}) now follows by taking $\gamma (n) = 1/\lambda (n) \in (0, 1)$ as $\lambda(n)> 1$ and using the inequality $(1-1/\lambda)^\lambda \leq 1/e$ for $\lambda>1$. The inequality (\ref{eq-klub-simple}) 
   follows immediately from (\ref{eq-klub-hard}).
  \end{proof}

\section{Proof of Theorem \ref{thm-klopt}}

\label{sec-proofoptimality}

{\color{black}
We first give a proof sketch to aid the reader; the complete proof follows below.
The proof is by contradiction and proceeds in two main steps, covered in Sections~\ref{sec-proofoptimality:step1} and \ref{ssec:disapp-lower-bound} respectively.

\textit{Step 1: Variance bound for the dual log-likelihood ratio (Propositions~\ref{lemma:finite:dual} and~\ref{lemma:variance-bound}).}
The KL-DRO estimator in \eqref{eq:kl-predictor} admits a dual representation (Proposition~\ref{lemma:finite:dual}) in which its optimal distribution $\mb Q^r$ has log-likelihood ratio $\eta^r(u) = \log\bigl((\alpha^r+u)/\nu^r\bigr)$ with respect to $\mb P$ — a log-linear tilt parameterized by a dual variable $\alpha^r > 0$ with $\nu^r= \exp(\int \log(\alpha^r+u) \, \d \mb P(u)  -r)$.
The key technical result (Proposition~\ref{lemma:variance-bound}) is that the variance $\V{\mb P}{\eta^r(\zeta)}/r$ remains bounded as $r\to 0$.
To prove this, we reparameterize by $\tilde\alpha = 1/\alpha^r \to 0$ and study the ratio $\V{\mb P}{\log(1+\tilde\alpha\zeta)}/\tilde r(\tilde\alpha)$, where
\[
  \tilde r(\tilde\alpha) \defn \int \log(1 + \tilde\alpha u)\,\d\mb P(u) + \log\int \frac{1}{1+\tilde\alpha u}\,\d\mb P(u)
\]
satisfies $r = \tilde r(\tilde\alpha)$ by the KKT conditions of Problem~\eqref{eq:finite:dual}.
Lemma~\ref{lemma-ubvariancelog} gives an upper bound on the variance by splitting at $\tilde\alpha\zeta = 1$, and Lemma~\ref{lemma:r_lower_bound} gives a matching lower bound on $\tilde r(\tilde\alpha)$.
Both bounds involve terms of order $\tilde\alpha^\rho L(1/\tilde\alpha)$ arising from the heavy tail, controlled via the Karamata representation of $L$ and the uniform convergence theorem for slowly varying functions.
The ratio remains bounded because numerator and denominator grow at the same rate.

\textit{Step 2: Contradiction via exponential change of measure.}
Suppose an estimator $\hat e$ satisfies Equation~\eqref{eq:uniform-exponential-guarantees} uniformly over $\mb P\in\mc P$ but outperforms KL-DRO with nonvanishing probability $p>0$ (Equation~\eqref{eq:suboptimal}).
Set $r(n) = (1-\delta/2)\lambda(n)/n$ for a small $\delta>0$ and let $\mb Q^{r(n)}$ be the optimal distributions in Problem~\eqref{eq:kl-predictor} evaluated at $\mb P$.
These are ``frustrating distributions'': they lie close to $\mb P$ in KL divergence ($\KL(\mb P, \mb Q^{r(n)}) = r(n)$) and have mean $\mu^{r(n)} = \hat e^\KL_{r(n)}(\mb P) = \int u\,\d\mb Q^{r(n)}(u)$, so the event of interest is $\hat e_n > \mu^{r(n)}$.
An exponential change of measure with tilting $\eta^{r(n)} = \log(\d\mb P/\d\mb Q^{r(n)})$ converts back from $\mb Q^{r(n)}$ to $\mb P$, yielding a lower bound on $\mb Q^{r(n)}[\hat e_n > \mu^{r(n)}]$ with two factors: an exponential cost $\exp(-nr(n))$ from the Radon-Nikodym weight, and a probability under $\mb P$ that the estimator exceeds $\mu^{r(n)}$ and the empirical mean of $\eta^{r(n)}$ concentrates around its expectation.
The concentration event holds with probability tending to $1$ by Chebyshev's inequality together with Step~1, which ensures $\V{\mb P}{\eta^{r(n)}(\zeta)} = O(r(n)) = o(1)$.
A lower bound on ${\rm KL}_{\inf}(\mb P_n, \mu^{r(n)})$ via the Donsker--Varadhan representation then shows that when the concentration event holds, the event in Equation~\eqref{eq:suboptimal} implies $\hat e_n > \mu^{r(n)}$, so $\mb P[\hat e_n > \mu^{r(n)}]\to p > 0$.
Combining these, we obtain $\mb Q^{r(n)}[\hat e_n > \mu^{r(n)}]\geq \exp(-(1-\delta/2)\lambda(n)(1+o(1)))$, which decays strictly more slowly than $\exp(-\lambda(n))$.
Since $\mb Q^{r(n)}\in\mc P$, this contradicts the assumption that $\hat e$ satisfies Equation~\eqref{eq:uniform-exponential-guarantees} uniformly.
}

\subsection{Properties of the dual solution}
\label{sec-proofoptimality:step1}

We state the following well known dual characterization of the Kullback-Leibler formulation for the sake of completeness. We give a short proof for completeness and leave the reader with the remark that very similar duality results are established in \citet{bayraksan2015data, honda2012finite, ahmadi2012entropic, agrawal2020optimal, agrawal2021regret}.

\begin{proposition}[]
  \label{lemma:finite:dual}
  For any $r>0$, we have the dual characterization
  \begin{equation}
    \label{eq:finite:dual}
   \hat e^{\KL}_r(\mb P)
    = \left\{
      \begin{array}{rl}
        \max &  \exp(-r)\exp\left(\int \log(\alpha+u)\, \d \mb P(u)\right)-\alpha\\
        \st & \alpha\geq 0.
      \end{array}\right.
  \end{equation}
  Furthermore,  we have that
  \[
    \eta^r(u)\defn \log\left(\frac{\d \mb P}{\d \mb Q^r}(u)\right) = \log\left(\frac{\alpha^r+u}{\nu^r}\right) 
  \]
  with $\nu^r= \exp(\int \log(\alpha^r+u) \, \d \mb P(u)  -r)>0$ where $\mb Q^r$ and $\alpha^r$ are optimal primal and dual solutions in Equations ~(\ref{eq:kl-predictor}) and \eqref{eq:finite:dual}, respectively.
\end{proposition}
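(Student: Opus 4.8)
The plan is to derive \eqref{eq:finite:dual} by Lagrangian duality and then read off $\eta^r$ from the optimal solution. First, I would make the normalization $\int \d \mb Q = 1$ that is implicit in \eqref{eq:kl-predictor} explicit, and attach a multiplier $\beta \geq 0$ to the constraint $\KL(\mb P, \mb Q) \leq r$ and a multiplier $\gamma \in \Re$ to the normalization. The objective $\int u \, \d \mb Q(u)$ and the map $\mb Q \mapsto \KL(\mb P, \mb Q)$ are convex, and $\mb Q = \mb P$ is strictly feasible since $\KL(\mb P, \mb P) = 0 < r$ (with finite objective $\mu$ in the cases of interest: a regularly varying $\mb P$ has $\mu < \infty$, and trivially so at $\mb P = \mb P_n$), so Slater's condition holds and standard convex duality gives
\[
  \hat e^\KL_r(\mb P) = \sup_{\beta \geq 0,~\gamma \in \Re} \left\{ \inf_{\mb Q} \left[ \int (u + \gamma) \,\d \mb Q(u) + \beta \, \KL(\mb P, \mb Q) \right] - \beta r - \gamma \right\},
\]
the inner infimum ranging over nonnegative measures $\mb Q$ with $\mb P \ll \mb Q$.

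Next I would evaluate the inner infimum in closed form. Representing $\mb P$ and $\mb Q$ by densities $p, q$ with respect to a common reference measure, the bracketed functional is $\int \bigl[ (u + \gamma) q + \beta \, p \log(p/q) \bigr]$ and decouples pointwise. On the set $\{p = 0\}$ it is minimized by $q = 0$ when $u + \gamma \geq 0$ and is unbounded below otherwise; since $u \geq 0$ this merely forces the dual restriction $\gamma \geq 0$. On $\{p > 0\}$ the integrand is strictly convex in $q$ with unique minimizer $q^\star(u) = \beta \, p(u)/(u + \gamma)$. Plugging $q^\star$ back in and using that $\mb P$ is a probability measure collapses the inner value to $\beta + \beta \E{\mb P}{\log(\gamma + u)} - \beta \log \beta - \beta r - \gamma$. (The constraint $\mb P \ll \mb Q$, as opposed to $\mb P \sim \mb Q$, matters only here: mass that $\mb Q$ places where $p = 0$ enters solely through $\int u \,\d \mb Q \geq 0$, so it is discarded at the optimum, or placed at $0$ in the degenerate subcase $\alpha^r = 0$ below.)

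It then remains to eliminate $\beta$. For fixed $\gamma$, the displayed value is maximized over $\beta > 0$ at $\beta = \exp(\E{\mb P}{\log(\gamma + u)} - r) = e^{-r} \exp\bigl(\int \log(\gamma + u) \,\d \mb P(u)\bigr)$ --- its $\beta$-derivative $\E{\mb P}{\log(\gamma + u)} - \log \beta - r$ changes sign exactly once --- where it equals $\beta - \gamma$; relabelling $\gamma$ as $\alpha$ reproduces \eqref{eq:finite:dual}. Since $g(\alpha) \defn e^{-r}\exp(\int \log(\alpha + u)\,\d \mb P(u)) - \alpha$ is continuous on $\Re_+$ and, by Jensen's inequality, bounded above by $e^{-r}(\alpha + \mu) - \alpha \to -\infty$, the supremum is attained at some $\alpha^r \geq 0$, with companion $\beta^r = \exp(\int \log(\alpha^r + u)\,\d \mb P(u) - r) = \nu^r$. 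The stationarity condition for $g$ reads $\int \beta^r/(\alpha^r + u) \,\d \mb P(u) = 1$ (with $\leq 1$ possible only when $\alpha^r = 0$, then rebalanced by an atom of $\mb Q^r$ at $0$), so $q^\star$ integrates to one and the optimal primal measure $\mb Q^r$ satisfies $\d \mb P / \d \mb Q^r(u) = (\alpha^r + u)/\nu^r$; taking logarithms yields the claimed formula for $\eta^r$. Finally $\nu^r > 0$ because $\int \log(\alpha^r + u)\,\d \mb P(u) > -\infty$ whenever $\mb P$ is not degenerate at $0$ --- if $\alpha^r > 0$ this is automatic, and if $\alpha^r = 0$ then necessarily $\mb P$ puts no mass at $0$.

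The main obstacle is not the algebra --- the two nested closed-form minimizations are short --- but making the convex-duality step rigorous in this infinite-dimensional setting where the objective may take the value $+\infty$, and treating cleanly the boundary phenomena flagged above (mass of $\mb Q$ off the support of $\mb P$, a possible boundary optimum $\alpha^r = 0$, and the degenerate case $\mb P = \delta_0$, for which both primal and dual values equal $0$). Since analogous duality statements are available in the references cited just before the proposition, one may alternatively discharge this step by appealing to them.
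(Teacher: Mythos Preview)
Your argument is correct and follows essentially the same route as the paper: Lagrangian duality to obtain \eqref{eq:finite:dual}, followed by recovery of the primal optimizer $\mb Q^r$ from the first-order (KKT) conditions at $\alpha^r$, with the boundary case $\alpha^r=0$ handled by placing the slack mass at $0$. The only difference is presentational---the paper quotes the dual formula from \cite{vanparys2021data} and then \emph{verifies} that the candidate $\mb Q^r=\mb Q^r_c+q^r_s\delta_0$ is primal-feasible and attains the dual value, whereas you derive the dual and the primal minimizer $q^\star$ simultaneously from the Lagrangian; the content is the same.
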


\begin{proof}

  \citet[Lemma 5]{vanparys2021data} proves\footnote{They establish this result assuming a compact event set but their proof applies verbatim to an event set $\Re_+$ which is bounded from below.} via a standard dualization argument that
  \[
    \hat e^\KL_{r}(\mb P) = \max_{\alpha\geq 0}~ \exp(-r)\exp\left(\textstyle\int \log(\alpha+u)\, \d \mb P(u)\right)-\alpha,
  \]
  where the maximum is achieved at some $\alpha^r\geq 0$. 
  
  Introduce the positive measure \( \mb Q^r_c \) through its Radon-Nikodym derivative
  \[
    \frac{\d \mb Q^r_c}{\d \mb P}(u) = \frac{\nu^r}{\alpha^r+u}.
  \]
  A necessary optimality condition for the optimal dual solution $\alpha^r$ in Problem \eqref{eq:finite:dual} is
  \begin{align}
    \label{eq:kkt-dual}
    -1 +\exp(-r)\exp\left(\int \log(\alpha^r+u)\, \d{\mb P}(u)\right)\int \frac{1}{\alpha^r+u} \d {\mb P}(u) + q^r_s = 0
  \end{align}
  for some $q^r_s\geq 0$ satisfying the complementarity condition $\alpha^r q^r_s = 0$. We can rewrite the previous optimality condition as
  \begin{equation*}
    \eqref{eq:kkt-dual} \iff \int \frac{\nu^r}{\alpha^r+u} \d {\mb P}(u) + q^r_s \leq 1 \iff \int \d \mb Q^r_c(u)+q^r_s = 1.
  \end{equation*}
  We claim now that an optimal primal solution in (\ref{eq:kl-predictor}) can be found as
  $\mb Q^r = \mb Q^r_c+q^r_s \delta_{0}$. By construction $\mb Q^r$ is in $\mc P$ as indeed it is positive and satisfies $\int \d \mb Q^r(u) = \int \d \mb Q^r_c(u) + q^r_s = 1$. Furthermore, we have that 
  \[
    \KL(\mb P, \mb Q^r) = \int \log\left(\frac{\d \mb P}{\d (\mb Q^r_c+q^r_s\delta_{0})}(u)\right) \d \mb P(u) \leq \int \log\left(\frac{\d \mb P}{\d \mb Q^r_c}(u)\right) \d \mb P(u) = \int \log\left(\alpha^r+u\right) \d \mb P(u) - \log(\nu^r) = r.
  \]
  Finally, observe that
  \begin{align*}
    \int u \d \mb Q^r(u) = \int u \d \mb Q_c^r(u) = -\alpha^r +\int (u+\alpha^r) \frac{\nu^r}{\alpha^r+u} \d \mb P(u) = -\alpha^r+\nu^r=-\alpha^r+\exp\left(\int \log(\alpha^r+u) \, \d \mb P(u)  -r\right),
  \end{align*}
  guaranteeing that $\mb Q^r$ is optimal in (\ref{eq:kl-predictor}).
  If $\alpha^r>0$, then by complementarity slackness we have $q^r_s=0$ and 
  \[
    \frac{\d \mb P}{\d \mb Q^r}(u) =  \frac{\d \mb P}{\d \mb Q_c^r}(u) = \frac{\alpha^r+u}{\nu^r}.
  \]
  Otherwise, if $\alpha^r=0$ then simply
  \[
    \frac{\d \mb P}{\d \mb Q^r}(u) = \frac{\d \mb P}{\d (\mb Q_c^r+q_s^r \delta_{0})}(u) =
    \left\{
    \begin{array}{rl}
      \frac{\alpha^r+u}{\nu^r} & {\rm{if}}~u> 0\\
      0 & {\rm{if}}~u=0
    \end{array}\right\}
    = \frac{\alpha^r+u}{\nu^r}
  \]
  and the proof is complete.
\end{proof}

\begin{proposition}
  \label{lemma:variance-bound}
  Let $\zeta$ be regularly varying. Then,
  \[
    \lim_{r\to 0}\frac{\V{\mb P}{\eta^{r}(\zeta)}}{r} < \infty.
  \]
\end{proposition}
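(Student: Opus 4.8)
The plan is to use the dual characterization from Proposition~\ref{lemma:finite:dual}, which gives $\eta^r(\zeta) = \log(\alpha^r + \zeta) - \log \nu^r$, where $\nu^r$ is a deterministic constant. Since variance is translation-invariant, $\V{\mb P}{\eta^r(\zeta)} = \V{\mb P}{\log(\alpha^r + \zeta)}$, so the entire task reduces to controlling the variance of $\log(\alpha^r + \zeta)$ as $r \to 0$ and showing it is $O(r)$. The first step is to understand the behavior of the dual optimizer $\alpha^r$ as $r \to 0$: I expect $\alpha^r \to \infty$ in this regime (the constraint becomes slack / the worst-case measure approaches $\mb P$, and the optimality condition forces $\alpha^r$ large). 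Quantifying the rate at which $\alpha^r \to \infty$ in terms of $r$ is the crux; I would extract this from the KKT condition $\nu^r \int (\alpha^r + u)^{-1} \d\mb P(u) = 1$ together with the definition $\log \nu^r = \int \log(\alpha^r + u)\,\d\mb P(u) - r$.

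**Main estimate.**
Write $\log(\alpha^r + \zeta) = \log \alpha^r + \log(1 + \zeta/\alpha^r)$; again by translation invariance $\V{\mb P}{\log(\alpha^r+\zeta)} = \V{\mb P}{\log(1 + \zeta/\alpha^r)}$. Set $t = 1/\alpha^r \to 0$. Then I need $\V{\mb P}{\log(1 + t\zeta)} = O(r)$. The natural approach is a Taylor/Karamata analysis: for the ``bulk'' where $t\zeta$ is small, $\log(1+t\zeta) \approx t\zeta$ contributes variance of order $t^2 \V{\mb P}{\zeta}$ when $\zeta$ has finite variance, but for regularly varying $\zeta$ with index $\rho \in (1,2]$ the variance may be infinite, so I must instead bound $\E{\mb P}{\log^2(1+t\zeta)}$ directly. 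Using the Karamata representation \eqref{eq:karamata-representation} and the tail $\mb P[\zeta > u] = L(u)u^{-\rho}$, one shows $\E{\mb P}{\log^2(1+t\zeta)}$ and $\left(\E{\mb P}{\log(1+t\zeta)}\right)^2$ are both of order $t^{\rho-1}L(1/t)$ up to slowly varying corrections (integrate by parts: $\E{\mb P}{g(\zeta)} = \int g'(u)\mb P[\zeta>u]\,\d u$ for increasing $g$ with $g(0)=0$, applied to $g(u)=\log^2(1+tu)$ and $g(u)=\log(1+tu)$). Hence $\V{\mb P}{\log(1+t\zeta)} = O(t^{\rho-1}L(1/t))$.

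**Linking $t$ to $r$.**
It remains to show $t^{\rho-1}L(1/t) = O(r)$, i.e. that $r$ and $t^{\rho-1}L(1/t)$ are comparable as $r \to 0$. This I would get from the constraint itself: $r = \int \log(\alpha^r+u)\,\d\mb P(u) - \log \nu^r = \int \log(1 + t u)\,\d\mb P(u) - \log\left(\int (1+tu)^{-1}\,\d\mb P(u)\right)$, and the right-hand side is exactly $\KL(\mb P, \mb Q^r)$-type quantity which by the same integration-by-parts/Karamata estimates is of order $t^{\rho-1}L(1/t)$ (it is, to leading order, half the variance plus lower-order terms — or more robustly, squeezed between constant multiples of $\E{\mb P}{\log(1+t\zeta)} - $ its logarithm-of-expectation, all regularly varying of the same index). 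Combining, $\V{\mb P}{\eta^r(\zeta)} = O(t^{\rho-1}L(1/t)) = O(r)$, which is the claim.

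**Main obstacle.**
The technical heart is the asymptotic analysis of the three integrals $\E{\mb P}{\log(1+t\zeta)}$, $\E{\mb P}{\log^2(1+t\zeta)}$, and $r = r(t)$ as $t \to 0$ using regular variation — in particular establishing that all three are regularly varying in $t$ of index $\rho - 1$ (with matching slowly varying parts) so that their ratios stay bounded. One must handle the case $\rho = 2$ (where a logarithmic correction from the slowly varying part can appear and $\V{\mb P}{\zeta}$ may or may not be finite) carefully, and ensure the $o(1)$ terms in the Karamata representation do not accumulate. The edge behavior where $\alpha^r = 0$ (boundary of the dual feasible set) must be ruled out for small $r$, which follows since $\alpha^r \to \infty$.
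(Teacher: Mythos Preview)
Your overall strategy---reduce to $\V{\mb P}{\log(1+t\zeta)}$ with $t=1/\alpha^r$, and show this has the same order as $r=\tilde r(t)$---matches the paper's.  However, the orders you write down are incorrect, and the resulting argument has a real gap.

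First, the index $\rho-1$ is wrong.  A direct computation (dominated convergence) gives $\E{\mb P}{\log(1+t\zeta)}\sim t\mu$, so $\bigl(\E{\mb P}{\log(1+t\zeta)}\bigr)^2\sim t^2\mu^2$; and integration by parts plus Karamata gives $\E{\mb P}{\log^2(1+t\zeta)}\asymp t^{\min(\rho,2)}L(1/t)$, not $t^{\rho-1}L(1/t)$.  These two moments are \emph{not} of the same order, so your ``both of order $X$, hence difference $O(X)$'' step is vacuous as written.  More importantly, the correct scale is $t^\rho L(1/t)$ for $\rho\in(1,2)$ and $t^2$ for $\rho\geq 2$, so the behaviour splits into two regimes that your single exponent cannot capture.

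Second, and this is the real obstacle, your lower bound on $r$ is too soft.  The heuristic ``$r\approx \tfrac12\V{\mb P}{\log(1+t\zeta)}$'' comes from Taylor-expanding $\log\E{\mb P}{e^{-Y}}$ with $Y=\log(1+t\zeta)$, and that expansion fails precisely because $Y$ is unbounded under a heavy tail; indeed the paper computes explicitly that for Pareto $\zeta$ with $\rho\in(1,2)$ the limiting ratio exceeds $2$, so ``half the variance'' is genuinely false there.  The paper circumvents this by decomposing \emph{both} the variance and $r(t)$ into a truncated piece $\V{\mb P}{\min(t\zeta,1)}$ and a tail piece of order $t^\rho L(1/t)$ (Lemmas~\ref{lemma:r_lower_bound} and~\ref{lemma-ubvariancelog}), and then bounding the ratio term by term; this is what lets a single argument cover all $\rho>1$ without case-splitting and is the idea your sketch is missing.
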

\begin{proof}
  Recall that as $L$ is slowly varying it admits the Karamata representation
  \[
    L(u) = \exp\left(\delta(u) + \int_1^u \frac{\epsilon(t)}{t} \d t\right)
  \]
  with $\delta(u)$ a bounded measurable function with $\lim_{u\to\infty} \delta(u) < \infty$ and $\epsilon(t)$ a bounded measurable function with $\lim_{t\to\infty} \epsilon(t)=0$ \cite[Theorem 1.3.1]{bingham1989regular}. We may hence consider $\tilde \alpha_0 \in (0, 1]$ sufficiently small  so that
  \(
  \max_{t\geq 1/\tilde \alpha_0}\epsilon(t)\leq \rho/2
  \)
  and
  \(
  \max_{t\geq 1/\tilde \alpha_0} \delta(t)-\delta(1/\tilde \alpha_0)\leq \log(2).
  \)
  First, we define for the sake of convenience for any $\tilde\alpha \in (0,  \tilde \alpha_0]$ the function
  \begin{align*}
    \tilde r(\tilde \alpha) \defn & \int \log(1 + \tilde \alpha u)  \d \mb P(u) + \log \int \frac{1}{1 + \tilde \alpha u} \d \mb P(u)
  \end{align*}
  which is nonnegative due to Jensen's inequality as the logarithm is a concave function. In fact, following Lemma \ref{lemma:r_lower_bound} stated below, we can lower bound $\tilde r$ for any $\tilde \alpha \to 0$ as
  \begin{equation}
    \tilde r(\tilde \alpha) \geq \frac{1}{32} \V{\mb P}{\min(\tilde \alpha \zeta,1)} +\tilde \alpha^{\rho}L(1/\tilde \alpha) \int^K_{1} \frac{z}{(1+z)^2 z^{\rho}} \d z + o\left(\tilde \alpha^{\rho}L(1/\tilde \alpha) \right)
  \end{equation}
  for any $K>1$. Second, we have the following upper bound, as stated by Lemma \ref{lemma-ubvariancelog} below: 
  \begin{equation}
    \V{\mb P}{\log(1 + \tilde \alpha \zeta)}\leq \V{\mb P}{\min (\tilde \alpha \zeta, 1))}  + \tilde \alpha^{\rho} L(1/\tilde \alpha)\int^\infty_{1} \frac{4\log(z+1)}{(z+1)z^{\rho/2}}\d z.
  \end{equation}

Consequently,
\begin{align*}
  \frac{\V{\mb P}{\log(1 + \tilde \alpha \zeta)}}{\tilde r(\tilde \alpha)}\leq &  \frac{\V{\mb P}{\min(\tilde \alpha \zeta, 1)}  + \tilde \alpha^{\rho} L(1/\tilde \alpha)\int^\infty_{1} \frac{4\log(z+1)}{(z+1)z^{\rho/2}} \d z}{\frac{1}{32} \V{\mb P}{\min(\tilde \alpha \zeta, 1)} +\tilde \alpha^{\rho}L(1/\tilde \alpha) \int^K_{1} \frac{z}{(1+z)^2 z^{\rho}} \d z + o\left(\tilde \alpha^{\rho}L(1/\tilde \alpha)\right)}\\
  \leq & 32+\frac{\int^\infty_{1} \frac{4\log(z+1)}{(z+1)z^{\rho/2}}  \d z}{\int^K_{1} \frac{z}{(1+z)^2 z^{\rho}}\d z +o(1) }
\end{align*}
for all $K> 1$. Hence, we have
\[
  \lim_{\tilde \alpha\downarrow 0}  \frac{\V{\mb P}{\log(1 + \tilde \alpha \zeta)}}{\tilde r(\tilde \alpha)}\leq 32+\frac{\int^\infty_{1} \frac{4\log(z+1)}{(z+1)z^{\rho/2}}  \d z}{\int^\infty_{1} \frac{z}{(1+z)^2 z^{\rho}}\d z} < \infty.
\]

We are finally ready to show the desired result.
A necessary optimality condition for the dual solution $\alpha^r$ in Problem \eqref{eq:finite:dual} is
  \begin{align*}
    0 \leq & 1 -\exp(-r)\exp\left(\int \log(\alpha^r+u)\, \d{\mb P}(u)\right)\int \frac{1}{\alpha^r+u} \d {\mb P}(u)\\
    \iff r\geq &  \int \log(\alpha^r+u)\, \d {\mb P}(u) +\log \int \frac{1}{\alpha^r+u} \d {\mb P}(u).
  \end{align*}
  
  As the function $\alpha \mapsto \int \log(\alpha+u)\, \d {\mb P}(u) +\log \int \tfrac{1}{(\alpha+u)} \d {\mb P}(u)$ is nonincreasing in $\alpha$ (from the convexity of the dual objective function in Problem \eqref{eq:finite:dual}) it follows that for $r< r_0 \defn \int \log(1/\tilde \alpha_0+u)\, \d {\mb P}(u) +\log \int \tfrac{1}{(1/\tilde \alpha_0+u)} \d {\mb P}(u)$, we must have $\alpha^r\geq 1/\tilde \alpha_0>0$. Hence, for $r< r_0$ we must satisfy the KKT optimality conditions for an optimal solution $\alpha^r>0$ in Problem \eqref{eq:finite:dual}. Hence,
\[
   r = \int \log(\alpha^r+u)\, \d {\mb P}(u) +\log \int \frac{1}{\alpha^r+u} \d {\mb P}(u) = \tilde r(1/\alpha^r).
\]
Please remark that from convexity of the dual objective function in  Problem \eqref{eq:finite:dual} it also follows immediately that the function $\tilde r$ is nondecreasing. Furthermore, using the sharpened Jensen's inequality of \cite{liao2019sharpening} we get
\begin{align}
  \label{eq:tilde-r-var-lb}
  \tilde r(\tilde \alpha) =& \log \int \frac{1}{1 + \tilde \alpha u} \d \mb P(u) - \int \log\left(\frac{1}{1 + \tilde \alpha u}\right)  \d \mb P(u)
  \geq  \frac 12 \V{\mb P}{\frac{1}{1 + \tilde \alpha \zeta }}>0,
\end{align}
Consider now any nonincreasing sequence $r_k\in (0, r_0)$ and $r_k\downarrow 0$. From the fact that the function $\tilde r$ is nondecreasing we have that the auxiliary sequence $\tilde \alpha_k=1/\alpha^{r_k}$ is nonincreasing. Additionally, from the fact that $r_k=\tilde r(\tilde \alpha_k)>0$ with $\lim_{\tilde \alpha\downarrow 0}r(\tilde \alpha)=0$ (see Lemma \ref{lemma:r_lower_bound}) we must have that $\tilde \alpha_k\downarrow 0$. Moreover, observe from Proposition \ref{lemma:finite:dual} that
\[
  \frac{\V{\mb P}{\eta^{r_k}(\zeta)}}{r_k}=\frac{\V{\mb P}{\eta^{\tilde r(\tilde \alpha_k)}(\zeta)}}{\tilde r(\tilde \alpha_k)} = \frac{\V{\mb P}{\log(1/\tilde \alpha_k +\zeta)}}{\tilde r(\tilde \alpha_k)} = \frac{\V{\mb P}{\log(1 + \tilde \alpha_k\zeta )}}{\tilde r(\tilde \alpha_k)}.
\]
Thus,
\[
   \lim_{r\downarrow 0}\frac{\V{\mb P}{\eta^{r}(\zeta)}}{r} = \lim_{\tilde \alpha\downarrow 0}  \frac{\V{\mb P}{\log(1 + \tilde \alpha \zeta )}}{\tilde r(\tilde \alpha)}<\infty
\]
completing the proof.
\end{proof}

The above proof relied on two auxiliary lemmas which we state and prove next:

\begin{lemma}
  \label{lemma:r_lower_bound}
  Let $\zeta$ be regularly varying of index $\rho>1$.
  We have $\lim_{\tilde\alpha\downarrow 0}\tilde r(\tilde\alpha)=0$ not faster than
  \[
    \tilde r(\tilde \alpha)\geq  \frac{1}{32} \V{\mb P}{\min(\tilde \alpha \zeta,1)} +\tilde \alpha^{\rho}L(1/\tilde \alpha) \int^K_{1} \frac{z}{(1+z)^2 z^{\rho}} \d z + o\left(\tilde \alpha^{\rho}L(1/\tilde \alpha) \right)
  \]
  for any $K>1$. 
\end{lemma}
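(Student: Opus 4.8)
The plan is to analyze $\tilde r(\tilde\alpha) = \int \log(1+\tilde\alpha u)\,\d\mb P(u) + \log\int \frac{1}{1+\tilde\alpha u}\,\d\mb P(u)$ by splitting the integration domain at the truncation threshold $u = 1/\tilde\alpha$ (equivalently $\tilde\alpha u = 1$), separating the ``small'' contribution (where $\tilde\alpha u \le 1$) from the ``tail'' contribution (where $\tilde\alpha u > 1$). The small part should be comparable to $\V{\mb P}{\min(\tilde\alpha\zeta,1)}$ via a Jensen-type lower bound, while the tail part is the genuinely heavy-tailed piece that, under regular variation of index $\rho$, contributes a term of order $\tilde\alpha^\rho L(1/\tilde\alpha)$. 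I would write $\tilde r(\tilde\alpha) = A(\tilde\alpha) - \log(1 + B(\tilde\alpha))$ where $A(\tilde\alpha) = \int\log(1+\tilde\alpha u)\,\d\mb P(u)$ and $B(\tilde\alpha) = \int\bigl(\frac{1}{1+\tilde\alpha u} - 1\bigr)\,\d\mb P(u) = -\int \frac{\tilde\alpha u}{1+\tilde\alpha u}\,\d\mb P(u)$, i.e., $\int \frac{1}{1+\tilde\alpha u}\,\d\mb P(u) = 1 + B(\tilde\alpha)$ with $B \le 0$, $B \to 0$; then $-\log(1+B) \ge -B = \int \frac{\tilde\alpha u}{1+\tilde\alpha u}\,\d\mb P(u)$, using $-\log(1+x) \ge -x$. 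So $\tilde r(\tilde\alpha) \ge \int\bigl[\log(1+\tilde\alpha u) - 1 + \frac{1}{1+\tilde\alpha u}\bigr]\,\d\mb P(u)$, and the integrand $g(\tilde\alpha u)$ with $g(x) = \log(1+x) - \frac{x}{1+x} \ge 0$ is nonnegative and increasing. (One should double-check whether this crude bound already suffices or whether the sharpened Jensen inequality \eqref{eq:tilde-r-var-lb} must be invoked for the small part; I expect a combination is cleanest.)

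**The variance-type term from the bounded part.** For the contribution of the region $\tilde\alpha u \le 1$, I would relate $\tilde r(\tilde\alpha)$ to $\V{\mb P}{\min(\tilde\alpha\zeta,1)}$. The sharpened Jensen inequality of \cite{liao2019sharpening} already gives $\tilde r(\tilde\alpha) \ge \frac12 \V{\mb P}{\frac{1}{1+\tilde\alpha\zeta}}$. On the event $\tilde\alpha\zeta \le 1$, $\frac{1}{1+\tilde\alpha\zeta} = 1 - \tilde\alpha\zeta + O((\tilde\alpha\zeta)^2)$ is an affine-plus-controlled-error image of $\min(\tilde\alpha\zeta,1) = \tilde\alpha\zeta$; on the event $\tilde\alpha\zeta > 1$, $\frac{1}{1+\tilde\alpha\zeta} \in (0,1/2)$ while $\min(\tilde\alpha\zeta,1) = 1$. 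The plan is to show, by a routine but careful comparison of the two random variables (conditioning on the threshold event and bounding the cross terms, using $\rho > 1$ so that $\E{\mb P}{\zeta} < \infty$), that $\V{\mb P}{\frac{1}{1+\tilde\alpha\zeta}} \ge \frac{1}{16}\V{\mb P}{\min(\tilde\alpha\zeta,1)}(1+o(1))$ as $\tilde\alpha \to 0$, which accounts for the $\frac{1}{32}$ constant after the factor $\frac12$. The constant is not sharp and I would not fight for it.

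**The heavy-tail term.** For the term $\tilde\alpha^\rho L(1/\tilde\alpha)\int_1^K \frac{z}{(1+z)^2 z^\rho}\,\d z$, I would isolate from $\int g(\tilde\alpha u)\,\d\mb P(u)$ (or directly from $\tilde r$, since the suppressed $-\log(1+B)+B$ correction is of smaller order) the contribution of $u \in [1/\tilde\alpha,\, K/\tilde\alpha]$. On that range, $g(\tilde\alpha u)$ behaves like $\frac{\tilde\alpha u}{(1+\tilde\alpha u)^2}$ up to the $\log$ vs. linear discrepancy — actually $g(x) = \log(1+x) - \frac{x}{1+x}$, and for the lower bound I want the $\frac{x}{(1+x)^2}$-type weight, which comes from $-\log(1+B)+B$ combined with $A$; it is cleaner to extract $\int_{1/\tilde\alpha}^{K/\tilde\alpha} \frac{\tilde\alpha u}{(1+\tilde\alpha u)^2}\,\d\mb P(u)$ directly. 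Substituting $z = \tilde\alpha u$ and using $\d\mb P(u) = \d(-\bar F(u))$ with $\bar F(u) = L(u)u^{-\rho}$, together with the uniform convergence theorem for regularly varying functions (Potter bounds), gives $\tilde\alpha^\rho L(1/\tilde\alpha)\int_1^K \frac{z}{(1+z)^2 z^\rho}\,\d z (1+o(1))$; the truncation at $K$ and the convergence of the density/measure are handled by dominated convergence with a Potter-bound envelope, justified precisely because $\tilde\alpha_0$ was chosen so $\epsilon(t) \le \rho/2$ for $t \ge 1/\tilde\alpha_0$. Everything beyond $K/\tilde\alpha$ and everything I discarded (the $-\log(1+B)+B$ correction, the overlap between the two regimes, lower-order terms in the Jensen expansion) gets absorbed into the stated $o(\tilde\alpha^\rho L(1/\tilde\alpha))$ error.

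**Main obstacle.** The delicate point is keeping the two estimates — the variance term, which can be either much larger or much smaller than $\tilde\alpha^\rho L(1/\tilde\alpha)$ depending on whether $\rho \gtrless 2$ — additively separated with honest constants, rather than double-counting the mass near $u \approx 1/\tilde\alpha$ where the bounded part and the tail part overlap. I would handle this by choosing the split point slightly below $1/\tilde\alpha$ for the variance estimate and slightly above for the tail estimate (or, more simply, by noting that the region $\tilde\alpha u \in [1,K]$ contributes $O(\tilde\alpha^\rho L(1/\tilde\alpha))$ to $\V{\mb P}{\min(\tilde\alpha\zeta,1)}$ as well, which only helps the lower bound), so the overlap is of the already-tolerated order $o(\tilde\alpha^\rho L(1/\tilde\alpha))$ or is sign-favorable. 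The Karamata/Potter machinery for the tail integral is standard; the bookkeeping of constants and the verification that $\tilde r(\tilde\alpha) \to 0$ (which needs $\E{\mb P}{\zeta} < \infty$, i.e., $\rho > 1$, to push $\int \frac{\tilde\alpha u}{1+\tilde\alpha u}\,\d\mb P(u) \to 0$ by dominated convergence) are the routine parts.
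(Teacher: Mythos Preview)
Your proposal has a sign error that breaks the plan. With your notation $1+B=\int \frac{1}{1+\tilde\alpha u}\,\d\mb P(u)$, the definition gives
\[
  \tilde r(\tilde\alpha)=A(\tilde\alpha)+\log(1+B(\tilde\alpha)),
\]
not $A-\log(1+B)$. Since $\log(1+B)\le B$, one obtains $\tilde r\le A+B=\int\bigl[\log(1+\tilde\alpha u)-\tfrac{\tilde\alpha u}{1+\tilde\alpha u}\bigr]\,\d\mb P(u)$; your ``crude lower bound'' is in fact an upper bound, so you cannot harvest the tail integral from it. Relatedly, the claim that the correction $\log(1+B)-B$ is ``of smaller order'' fails: it is of order $B^2\sim\tilde\alpha^2\mu^2$, which is the \emph{same} order as $\V{\mb P}{\min(\tilde\alpha\zeta,1)}$ when $\rho>2$ and is not $o(\tilde\alpha^\rho L(1/\tilde\alpha))$ when $\rho\ge 2$. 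Consequently you also cannot simply run the sharpened Jensen bound $\tilde r\ge\tfrac12\V{\mb P}{1/(1+\tilde\alpha\zeta)}$ for the variance piece and a separate argument for the tail piece and then add them: those are two lower bounds on the same quantity, not an additive split.

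The missing idea, which the paper supplies, is to keep the exact identity
\[
  \tilde r(\tilde\alpha)=\int_0^\infty \frac{z}{(1+z)^2}\,\mb P[\tilde\alpha\zeta\ge z]\,\d z+\bigl[\log(1-x)+x\bigr],\qquad x=\E{\mb P}{\tfrac{\tilde\alpha\zeta}{1+\tilde\alpha\zeta}},
\]
split the first integral at $z=1$, and observe that the $[0,1]$ piece together with the \emph{truncated} correction $\log(1-x_1)+x_1$ (where $x_1=\int_0^1\frac{1}{(1+z)^2}\mb P[\tilde\alpha\zeta\ge z]\,\d z$) is precisely the $\tilde r$ functional evaluated at the truncated variable $\min(\tilde\alpha\zeta,1)$. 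The sharpened Jensen inequality then applies to this truncated $\tilde r$ and, combined with Lemma~\ref{lemma:variance-formula} for $g(t)=1/(1+t)$ on $[0,1]$ with $m=1/4$, yields the $\tfrac{1}{32}\V{\mb P}{\min(\tilde\alpha\zeta,1)}$ term. The $[1,\infty)$ piece of the tail integral gives the heavy-tail term via the uniform convergence theorem on $[1,K]$ (this part of your plan is correct). What remains is only the difference $[\log(1-x)+x]-[\log(1-x_1)+x_1]$, and a short Taylor argument shows this is $O(\tilde\alpha^{1+\rho}L(1/\tilde\alpha))=o(\tilde\alpha^\rho L(1/\tilde\alpha))$, which is genuinely negligible against both retained terms. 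This redistribution of the (negative) correction term---rather than discarding it---is exactly what makes the bound additive.
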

\begin{proof}
  We have
   \begin{align*}
    0\leq \tilde r(\tilde \alpha) \defn & \int \log(1 + \tilde \alpha u)  \d \mb P(u) + \log \int \frac{1}{1 + \tilde \alpha u} \d \mb P(u)\\
    = &  \int \tilde \alpha u \d \mb P(u) - \int (\tilde \alpha u - \log(1 + \tilde \alpha u)) \, \d \mb P(u) + \log \left( 1- \int \frac{\tilde \alpha u}{1 + \tilde \alpha u} \d \mb P(u)\right)\\
    = &  \int \tilde \alpha u \d \mb P(u) - \int \frac{\tilde \alpha u}{1 + \tilde \alpha u} \d \mb P(u) - \int (\tilde \alpha u - \log(1 + \tilde \alpha u)) \, \d \mb P(u) \\
                                    & \quad + \log \left( 1- \int \frac{\tilde \alpha u}{1 + \tilde \alpha u} \d \mb P(u)\right)+\int \frac{\tilde \alpha u}{1 + \tilde \alpha u} \d \mb P(u)\\
    = &  \int \tilde \alpha u \left(1-\frac{1}{1+\tilde \alpha u} \right) \d \mb P(u) -  \int \tilde \alpha u - \log(1 + \tilde \alpha u) \, \d \mb P(u) \\
                                        & \quad + \log \left( 1- \int \frac{\tilde \alpha u}{1 + \tilde \alpha u} \d \mb P(u)\right)+\int \frac{\tilde \alpha u}{1 + \tilde \alpha u} \d \mb P(u).
   \end{align*}
  By partial integration we have
  \begin{align*}
    \int \tilde \alpha u\left(1-\frac{1}{1+\tilde \alpha u} \right) \d \mb P(u) = & \int_0^\infty \frac{z(z+2)}{(1+z)^2} \mb P[\tilde \alpha \zeta\geq z] \d z\\
    \intertext{and likewise}
    \int (\tilde \alpha u - \log(1 + \tilde \alpha u)) \, \d \mb P(u) = & \int_0^\infty \frac{z}{1+z} \mb P[\tilde \alpha \zeta\geq z] \d z,\\
    \int \frac{\tilde \alpha u}{1 + \tilde \alpha u} \d \mb P(u)= & \int_0^\infty \frac{1}{(1+z)^2} \mb P[\tilde \alpha \zeta\geq z] \d z.
  \end{align*}
  Hence, we get  
  \begin{align*}
    \tilde r(\tilde \alpha) = & \int^\infty_{0} \frac{z}{(1+z)^2} \mb P[\tilde \alpha \zeta \geq z] \d z + \log\left(1-{\mb E}_{\mb P}\left[\frac{\tilde \alpha \zeta}{1+ \tilde \alpha \zeta}\right]\right)+{\mb E}_{\mb P} \left[\frac{\tilde \alpha \zeta}{1+\tilde\alpha \zeta}\right].
  \end{align*}
  As we have that $\log(1-x) + x \leq 0$ for all $x\in (-\infty, 1)$ it follows that
  \begin{align*}
    \tilde r(\tilde \alpha) \leq & \int^\infty_{0} \frac{z}{(1+z)^2} \mb P[\tilde \alpha \zeta\geq z] \d z 
    \leq 
    \int^\infty_{0} \frac14 \mb P[\tilde \alpha \zeta\geq z] \d z
    = 
    \frac{\tilde \alpha \E{\mb P}{\zeta}}{4}
  \end{align*}
  and hence $\lim_{\tilde \alpha\downarrow 0} \tilde r(\tilde \alpha)=0$.

  We will now establish the lower bound. First remark that 
  \begin{align*}
   \E{\mb P}{\frac{\tilde \alpha \zeta}{1+\tilde \alpha \zeta}}= & \int_0^{1} \frac{1}{(1+z)^2} \mb P[\tilde \alpha\zeta\geq z] \d z + \int_{1}^\infty \frac{1}{(1+z)^2} \mb P[\tilde \alpha \zeta\geq z] \d z.
  \end{align*}
  We will now upper bound both terms separately.
  Clearly, we have that
  \[
    \int_0^{1} \frac{1}{(1+z)^2} \mb P[\tilde \alpha\zeta\geq z] \d z \leq \int_0^1 \mb P[\tilde \alpha \zeta\geq z]\d z \leq \E{\mb P}{\tilde \alpha\zeta}.
  \]
  Furthermore, by Lemma \ref{lemma:uniform-bound} we get
  \begin{align*}
    \int_{1}^\infty \frac{1}{(1+z)^2} \mb P[\tilde \alpha \zeta\geq z] \d z  = & \int_{1}^\infty \frac{1}{(1+z)^2} \frac{L(z/\tilde \alpha)}{(z/\tilde \alpha)^\rho} \d z\\
    = & \tilde \alpha^\rho L(1/\tilde \alpha) \int_{1}^\infty \frac{1}{(1+z)^2z^\rho} \frac{L(z/\tilde \alpha)}{L(1/\tilde \alpha)} \d z\\
    \leq & \tilde \alpha^\rho L(1/\tilde \alpha) \int_{1}^\infty \frac{1}{(1+z)^2z^{(\rho-\max_{t\geq 1/\tilde \alpha}\epsilon(t))}}  \exp(\textstyle\max_{t\geq 1/\tilde \alpha} \delta(t)-\delta(1/\tilde \alpha))  \d z\\
    \leq &  \tilde \alpha^\rho L(1/\tilde \alpha) \int_{1}^\infty \frac{2}{(1+z)^2} \d z = \tilde \alpha^\rho L(1/\tilde \alpha).
  \end{align*}

  Using the Taylor expansion $\log(1-x)+x = -1/2 x^2 + o(x^2)$ we get
  \begin{align*}
    & \log\left(1-\E{\mb P}{\frac{\tilde \alpha\zeta}{1+\tilde \alpha\zeta}}\right)+\E{\mb P}{\frac{\tilde \alpha\zeta}{1+\tilde \alpha\zeta}} \\
    & \quad - \log\left(1-\int_0^{1} \frac{1}{(1+z)^2} \mb P[\tilde \alpha\zeta\geq z] \d z\right) -\int_0^{1} \frac{1}{(1+z)^2} \mb P[\tilde \alpha\zeta\geq z] \d z\\
    = & \log\frac{\left(1-\int_0^{1} \frac{1}{(1+z)^2} \mb P[\tilde \alpha\zeta\geq z] \d z - \int_1^{\infty} \frac{1}{(1+z)^2} \mb P[\tilde \alpha\zeta\geq z] \d z\right)}{\left(1-\int_0^{1} \frac{1}{(1+z)^2} \mb P[\tilde \alpha \zeta\geq z] \d z\right)}+
        \int_1^{\infty} \frac{1}{(1+z)^2} \mb P[\tilde \alpha\zeta\geq z] \d z\\
    = & \log\left(1- \frac{\int_1^{\infty} \frac{1}{(1+z)^2} \mb P[\tilde \alpha\zeta\geq z] \d z}{1-\int_0^{1} \frac{1}{(1+z)^2} \mb P[\tilde \alpha\zeta\geq z] \d z}\right)+
        \int_1^{\infty} \frac{1}{(1+z)^2} \mb P[\tilde \alpha\zeta\geq z] \d z\\
    = & \log\left(1- \frac{\int_1^{\infty} \frac{1}{(1+z)^2} \mb P[\tilde \alpha\zeta\geq z] \d z}{1-\int_0^{1} \frac{1}{(1+z)^2} \mb P[\tilde \alpha\zeta\geq z] \d z}\right)+
        \frac{\int_1^{\infty} \frac{1}{(1+z)^2} \mb P[\tilde \alpha\zeta\geq z] \d z}{1-\int_0^{1} \frac{1}{(1+z)^2} \mb P[\tilde \alpha\zeta\geq z] \d z}\\
    & \quad - \frac{\int_0^{1} \frac{1}{(1+z)^2} \mb P[\tilde \alpha\zeta\geq z] \d z \cdot\int_1^{\infty} \frac{1}{(1+z)^2} \mb P[\tilde \alpha\zeta\geq z] \d z}{1-\int_0^{1} \frac{1}{(1+z)^2} \mb P[\tilde \alpha\zeta\geq z] \d z}\\
    = & -\frac{1}{2} \left(\frac{\int_1^{\infty} \frac{1}{(1+z)^2} \mb P[\tilde \alpha\zeta\geq z] \d z}{1-\int_0^{1} \frac{1}{(1+z)^2} \mb P[\tilde \alpha\zeta\geq z] \d z} \right)^2 +o\left(\left(\frac{\int_1^{\infty} \frac{1}{(1+z)^2} \mb P[\tilde \alpha\zeta\geq z] \d z}{1-\int_0^{1} \frac{1}{(1+z)^2} \mb P[\tilde \alpha\zeta\geq z] \d z}\right)^2\right) \\
    & \quad - \int_0^{1} \frac{1}{(1+z)^2} \mb P[\tilde \alpha\zeta\geq z] \d z \cdot \left(\frac{\int_1^{\infty} \frac{1}{(1+z)^2} \mb P[\tilde \alpha\zeta\geq z] \d z}{1-\int_0^{1} \frac{1}{(1+z)^2} \mb P[\tilde \alpha\zeta\geq z] \d z}\right)\\
    = & O(\tilde \alpha^{1+\rho}L(1/\tilde \alpha)).
  \end{align*}
  Furthermore, we have for any $K\geq 1$ that 
  \begin{align*}
    & \int^\infty_{0} \frac{z}{(1+z)^2} \mb P[\tilde \alpha \zeta\geq z] \d z \\
    = & \int^{1}_{0} \frac{z}{(1+z)^2}\mb P[\tilde \alpha \zeta\geq z] \d z + \int^\infty_{1} \frac{z}{(1+z)^2} \frac{L(z/\tilde \alpha)}{(z/\tilde \alpha)^{\rho}} \d z\\
    \geq & \int^{1}_{0} \frac{z}{(1+z)^2} \mb P[\tilde \alpha \zeta\geq z] \d z + \tilde \alpha^{\rho}L(1/\tilde \alpha) \int^K_{1} \frac{z}{(1+z)^2 z^{\rho}} \frac{L(z/\tilde \alpha)}{ L(1/\tilde \alpha)} \d z.
  \end{align*}
  Recall that by the uniform convergence theorem \cite[Theorem 1.2.1]{bingham1989regular} we have for any slowly varying function $L$ that
  \[
    \lim_{\tilde \alpha\to 0} \sup_{z\in [a, b]}\abs{\frac{L(z/\tilde \alpha)}{ L(1/\tilde \alpha)}-1} = 0
  \]
  for any $0<a\leq b<\infty$.
  Hence,
\begin{align*}
  & \tilde \alpha^{\rho}L(1/\tilde \alpha) \int^K_{1} \frac{z}{(1+z)^2 z^{\rho}} \frac{L(z/\tilde \alpha)}{ L(1/\tilde \alpha)} \d z \\
  = &  \tilde \alpha^{\rho}L(1/\tilde \alpha) \int^K_{1} \frac{z}{(1+z)^2 z^{\rho}} \d z + \tilde \alpha^{\rho}L(1/\tilde \alpha) \int^K_{1} \frac{z}{(1+z)^2 z^{\rho}} \left(\frac{L(z/\tilde \alpha)}{ L(1/\tilde \alpha)}-1\right) \d z\\
  \geq & \tilde \alpha^{\rho}L(1/\tilde \alpha) \int^K_{1} \frac{z}{(1+z)^2 z^{\rho}} \d z - \tilde \alpha^{\rho}L(1/\tilde \alpha) \int^K_{1} \frac{z}{(1+z)^2 z^{\rho}} \abs{\frac{L(z/\tilde \alpha)}{ L(1/\tilde \alpha)}-1} \d z\\
  \geq & \tilde \alpha^{\rho}L(1/\tilde \alpha) \int^K_{1} \frac{z}{(1+z)^2 z^{\rho}} \d z - \tilde \alpha^{\rho}L(1/\tilde \alpha) \sup_{z\in [1, K]}\abs{\frac{L(z/\tilde \alpha)}{L(1/\tilde \alpha)}-1} \int^K_{1} \frac{z}{(1+z)^2 z^{\rho}} \d z\\
  \geq & \tilde \alpha^{\rho}L(1/\tilde \alpha) \int^K_{1} \frac{z}{(1+z)^2 z^{\rho}} \d z + o\left(\tilde \alpha^{\rho}L(1/\tilde \alpha) \right).
\end{align*}

  Chaining all results together we get
  \begin{align*}
    \tilde r(\tilde \alpha) = & \int^\infty_{0} \frac{z}{(1+z)^2} \mb P[\tilde \alpha \zeta\geq z] \d z + \log\left(1-\E{\mb P}{\frac{\tilde \alpha \zeta}{1+ \tilde \alpha \zeta}}\right)+\E{\mb P}{\frac{\tilde \alpha \zeta}{1+\tilde \alpha\zeta}}\\
    = & \int^\infty_{0} \frac{z}{(1+z)^2} \mb P[\tilde \alpha\zeta\geq z] \d z + \log\left(1-\int_0^{1} \frac{1}{(1+z)^2} \mb P[\tilde \alpha\zeta\geq z] \d z\right)+\int_0^{1} \frac{1}{(1+z)^2} \mb P[\tilde \alpha\zeta\geq z] \d z \\
                              & \quad + O\left(\tilde \alpha^{1+\rho} L(1/\tilde \alpha)\right)\\
    = &  \int^1_{0} \frac{z}{(1+z)^2} \mb P[\tilde \alpha\zeta\geq z] \d z + \log\left(1-\int_0^{1} \frac{1}{(1+z)^2} \mb P[\tilde \alpha\zeta\geq z] \d z\right)+\int_0^{1} \frac{1}{(1+z)^2} \mb P[\tilde \alpha\zeta\geq z] \d z \\
                              & \quad + \int^\infty_{1} \frac{z}{(1+z)^2} \mb P[\tilde \alpha \zeta \geq z] \d z + O\left(\tilde \alpha^{\rho+1} L(1/\tilde \alpha)\right)\\
    = &  \int \frac{z}{(1+z)^2} \mb P[\min(\tilde \alpha \zeta,1) \geq z] \d z + \log\left(1-\int \frac{1}{(1+z)^2} \mb P[\min(\tilde \alpha \zeta,1)\geq z] \d z\right) \\
                              & \quad +\int \frac{1}{(1+z)^2} \mb P[\min(\tilde \alpha \zeta,1)\geq z] \d z + \int^\infty_{1} \frac{z}{(1+z)^2} \mb P[\tilde \alpha\zeta\geq z] \d z + O\left(\tilde \alpha^{\rho+1} L(1/\tilde \alpha)\right)\\
    \geq & \V{\mb P}{\frac{1}{1+\min(\tilde \alpha \zeta,1)}}/2 + \int^\infty_{1} \frac{z}{(1+z)^2} \mb P[\tilde \alpha\zeta\geq z] \d z + O\left(\tilde \alpha^{\rho+1} L(1/\tilde \alpha)\right)\\
    \geq & \frac{1}{32} \V{\mb P}{\min(\tilde \alpha \zeta,1)} +\tilde \alpha^{\rho}L(1/\tilde \alpha) \int^K_{1} \frac{z}{(1+z)^2 z^{\rho}} \d z + o\left(\tilde \alpha^{\rho}L(1/\tilde \alpha) \right)
  \end{align*}
  where the first inequality is a sharpened Jensen's inequality of \cite{liao2019sharpening} and
  the final inequality follows from Lemma \ref{lemma:variance-formula} with $g(t)=1/(1+t)$ for $t\leq 1$ and $m=1/4$.
\end{proof}

\begin{lemma}
  \label{lemma-ubvariancelog}
    Let $\zeta$ be regularly varying of index $\rho>1$.
  We have $\lim_{\tilde \alpha\downarrow 0} \V{\mb P}{\log(1 + \tilde \alpha \zeta)}=0$ at least as fast as
  \[
    \V{\mb P}{\log(1 + \tilde \alpha \zeta)}\leq \V{\mb P}{\min (\tilde\alpha\zeta, 1))}  + \tilde \alpha^{\rho} L(1/\tilde \alpha)\int^\infty_{1} \frac{4\log(z+1)}{(z+1)z^{\rho/2}}\d z.
  \]
\end{lemma}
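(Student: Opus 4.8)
The plan is to compare $\log(1+\tilde\alpha\zeta)$ with its truncation at the natural threshold $\tilde\alpha\zeta=1$. Write $\log(1+\tilde\alpha\zeta)=U+R$ with
\[
  U\defn \log\!\bigl(1+\min(\tilde\alpha\zeta,1)\bigr)=\min\!\bigl(\log(1+\tilde\alpha\zeta),\log 2\bigr),\qquad R\defn\bigl(\log(1+\tilde\alpha\zeta)-\log 2\bigr)_+,
\]
so that $U$ is the bounded ``bulk'' part and $R$ is supported on the rare event $\{\tilde\alpha\zeta>1\}$, on which $U$ is frozen at the constant $\log 2$. For the bulk I would use that $s\mapsto\log(1+s)$ has derivative $1/(1+s)\in(0,1]$ on $\Re_+$ and is therefore $1$-Lipschitz; combined with the identity $2\V{\mb P}{g(W)}={\mb E}[(g(W)-g(\tilde W))^2]$ for an independent copy $\tilde W$ of $W$ (the mirror of Lemma~\ref{lemma:variance-formula}), this gives $\V{\mb P}{U}=\V{\mb P}{\log(1+\min(\tilde\alpha\zeta,1))}\le\V{\mb P}{\min(\tilde\alpha\zeta,1)}$.

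Next I would expand $\V{\mb P}{U+R}=\V{\mb P}{U}+2\bigl(\E{\mb P}{UR}-\E{\mb P}{U}\E{\mb P}{R}\bigr)+\V{\mb P}{R}$. Because $R$ is nonzero only where $U=\log 2$, one has $\E{\mb P}{UR}=\log 2\,\E{\mb P}{R}$, so the cross term equals $2(\log 2-\E{\mb P}{U})\E{\mb P}{R}\le 2\log 2\,\E{\mb P}{R}$; together with $\V{\mb P}{R}\le\E{\mb P}{R^2}$ the whole problem reduces to bounding $\E{\mb P}{R}$ and $\E{\mb P}{R^2}$. Writing these as integrals of the tail (no boundary contribution at the threshold is lost, since $R$ vanishes continuously there, $\log\tfrac{1+z}{2}=0$ at $z=1$, while the contribution at $\infty$ vanishes because $\rho>1$ dominates a logarithm), one obtains
\[
  \E{\mb P}{R}=\int_1^\infty\frac{\mb P[\tilde\alpha\zeta>z]}{1+z}\,\d z,\qquad \E{\mb P}{R^2}\le\int_1^\infty\frac{2\log(1+z)}{1+z}\,\mb P[\tilde\alpha\zeta>z]\,\d z.
\]

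Finally I would insert the assumed tail $\mb P[\tilde\alpha\zeta>z]=\tilde\alpha^\rho L(z/\tilde\alpha)z^{-\rho}$ for $z\ge 1$ and, exactly as in the proof of Lemma~\ref{lemma:r_lower_bound}, invoke the Potter-type uniform bound $L(z/\tilde\alpha)/L(1/\tilde\alpha)\le 2z^{\rho/2}$, valid for $\tilde\alpha$ small and $z\ge 1$ (Lemma~\ref{lemma:uniform-bound}); using $\log(1+z)\ge\log 2$ on $[1,\infty)$ to absorb the $\E{\mb P}{R}$-contribution into the $\log(1+z)$-weighted integral, all error terms collapse into $\tilde\alpha^\rho L(1/\tilde\alpha)\int_1^\infty\frac{4\log(z+1)}{(z+1)z^{\rho/2}}\,\d z$ (the constants chosen generously), which is finite for every $\rho>1$ since its integrand decays like $z^{-1-\rho/2}\log z$; together with the bulk bound this is the claim, and the error term vanishes as $\tilde\alpha\downarrow 0$. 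The step I expect to be the main obstacle is the cross term $2(\log 2-\E{\mb P}{U})\E{\mb P}{R}$: since $\E{\mb P}{U}\to 0$ while $\E{\mb P}{R}$ is of the exact order $\tilde\alpha^\rho L(1/\tilde\alpha)$, this term is of the same order as the entire error and cannot simply be discarded — it has to be folded into the $\log(1+z)$-weighted tail integral, which is what dictates the factor $4$ and the exponent $\rho/2$ in the statement; if a sharper constant were desired, one would exploit the strict slack in the contraction step (the derivative $1/(1+\tilde\alpha\zeta)$ is strictly below $1$ on $\{0<\tilde\alpha\zeta\le 1\}$, so $\V{\mb P}{\min(\tilde\alpha\zeta,1)}-\V{\mb P}{U}$ is itself of order $\tilde\alpha^\rho L(1/\tilde\alpha)$) to offset part of it.
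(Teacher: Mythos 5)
Your decomposition $\log(1+\tilde\alpha\zeta)=U+R$ with $U=\log(1+\min(\tilde\alpha\zeta,1))$ and $R=(\log(1+\tilde\alpha\zeta)-\log 2)_+$ is essentially the same splitting at the threshold $\tilde\alpha\zeta=1$ that the paper performs, just carried out at the level of the random variable instead of at the level of the second-moment integral. The bulk bound $\V{\mb P}{U}\le \V{\mb P}{\min(\tilde\alpha\zeta,1)}$ via Lipschitzness of $\log(1+\cdot)$ is exactly the paper's invocation of Lemma~\ref{lemma:variance-formula}, and the tail bookkeeping via $\E{\mb P}{R}$ and $\E{\mb P}{R^2}$ expressed as layer-cake integrals, plus the Potter-type bound from Lemma~\ref{lemma:uniform-bound}, parallels the paper's treatment of $\int_1^\infty \frac{2\log(z+1)}{z+1}\,\mb P[\tilde\alpha\zeta\ge z]\,\d z$.

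There is, however, a constant-tracking slip that means the steps as you describe them do \emph{not} establish the stated bound with the factor $4$. You discard the $-\log 2$ when you write $\E{\mb P}{R^2}\le\int_1^\infty\frac{2\log(1+z)}{1+z}\mb P[\tilde\alpha\zeta>z]\,\d z$; you then \emph{separately} add the cross term $2\log 2\,\E{\mb P}{R}$ by using $\log 2\le\log(1+z)$, which double-counts and gives
\(
\E{\mb P}{R^2}+2\log 2\,\E{\mb P}{R}\le\int_1^\infty\frac{4\log(1+z)}{1+z}\mb P[\tilde\alpha\zeta>z]\,\d z,
\)
and after the Potter factor of $2$ from Lemma~\ref{lemma:uniform-bound} this yields $8$, not $4$. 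The remedy is to keep the exact identity $\E{\mb P}{R^2}=\int_1^\infty\frac{2\bigl(\log(1+z)-\log 2\bigr)}{1+z}\mb P[\tilde\alpha\zeta>z]\,\d z$; then the $-\log 2$ cancels the cross-term contribution exactly, giving $\E{\mb P}{R^2}+2\log 2\,\E{\mb P}{R}=\int_1^\infty\frac{2\log(1+z)}{1+z}\mb P[\tilde\alpha\zeta>z]\,\d z$, and the factor $4$ follows after the Potter bound. The paper sidesteps this issue entirely by working with $\E{\mb P}{\log(1+\tilde\alpha\zeta)^2}$ directly via partial integration and lower-bounding $\E{\mb P}{\log(1+\tilde\alpha\zeta)}^2$ by $\E{\mb P}{\log(1+\min(\tilde\alpha\zeta,1))}^2$, so no cross term ever appears; your variable-level decomposition is more transparent about where the tail contribution comes from, at the cost of having to track that cancellation by hand.
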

\begin{proof}
  By partial integration we have
\begin{align*}
  \V{\mb P}{\log(1 + \tilde \alpha \zeta)} = & \E{\mb P}{\log(1 + \tilde \alpha \zeta)^2} - \E{\mb P}{\log(1 + \tilde \alpha \zeta)}^2\\
  = & \int^\infty_{0} \frac{2\log(z+1)}{z+1} \mb P[\tilde \alpha\zeta\geq  z] \d z - \E{\mb P}{\log(1+\tilde \alpha \zeta)}^2.
\end{align*}
We can control the variance term as
\begin{align*}
  & \V{\mb P}{\log(1 + \tilde \alpha \zeta)}\\
  \leq & \int^\infty_{0} \frac{2\log(z+1)}{z+1} \mb P[\tilde \alpha \zeta\geq  z] \d z - \E{\mb P}{\log(1+\min (\tilde \alpha\zeta, 1))}^2\\
  =&  \int^1_{0} \frac{2\log(z+1)}{z+1} \mb P[\tilde \alpha \zeta\geq  z] \d z-  \E{\mb P}{\log(1+\min (\tilde \alpha\zeta, 1))}^2 + \int^\infty_{1} \frac{2\log(z+1)}{z+1} \mb P[\tilde \alpha \zeta\geq  z] \d z \\
  =&  \int \frac{2\log(z+1)}{z+1} \mb P[\min (\tilde \alpha\zeta, 1)\geq  z] \d z-  \E{\mb P}{\log(1+\min (\tilde \alpha\zeta, 1))}^2 \\
  & \quad + \int^\infty_{1} \frac{2\log(z+1)}{z+1} \mb P[\tilde \alpha \zeta\geq  z] \d z \\
  =&  \V{\mb P}{\log(1+\min (\tilde\alpha\zeta, 1))}  + \int^\infty_{1} \frac{2\log(z+1)}{z+1} \mb P[\tilde\alpha\zeta\geq  z] \d z \\
  \leq & \V{\mb P}{ \min (\tilde\alpha\zeta, 1))} + \int^\infty_{1} \frac{2\log(z+1)}{z+1} \mb P[\tilde \alpha \zeta \geq  z] \d z \\
  = & \V{\mb P}{ \min (\tilde\alpha\zeta, 1))}  + \tilde \alpha^{\rho} L(1/\tilde \alpha)\int^\infty_{1} \frac{2\log(z+1)}{(z+1)z^\rho} \frac{L(z/\tilde \alpha)}{L(1/\tilde \alpha)} \d z\\
  \leq & \V{\mb P}{\min (\tilde\alpha\zeta, 1))}  + \tilde \alpha^{\rho} L(1/\tilde \alpha)\int^\infty_{1} \frac{2\log(z+1)}{(z+1)z^{(\rho-\max_{t\geq 1/\tilde \alpha}\epsilon(t))}} \exp(\textstyle\max_{t\geq 1/\tilde \alpha} \delta(t)-\delta(1/\tilde \alpha)) \d z\\
  \leq & \V{\mb P}{\min (\tilde\alpha\zeta, 1))}  + \tilde \alpha^{\rho} L(1/\tilde \alpha)\int^\infty_{1} \frac{4\log(z+1)}{(z+1)z^{\rho/2}}\d z,
\end{align*}
where the inequality $ \V{\mb P}{\log(1+\min (\tilde\alpha\zeta, 1))}\leq \V{\mb P}{ \min (\tilde\alpha\zeta, 1))}$ follows from Lemma  \ref{lemma:variance-formula} with $g(t) = \log (1+t), t\geq 0$ and $M=1$.
\end{proof}

Unsurprisingly, we can obtain a sharper result in case $\zeta$ is bounded. Indeed, as Proposition \ref{lemma:variance-bound-bounded} points out, the ratio  $\V{\mb P}{\eta^{r}(\zeta)}/r$ is asymptotically smaller than two as $r$ gets small. Straightforward analytical calculations (which we omit for the sake of brevity) indicate that if $\zeta$ is Pareto distributed with shape parameter $\rho\in (1, 2)$ then
\[
  \lim_{r\to 0}\frac{\V{\mb P}{\eta^{r}(\zeta)}}{r} = \frac{\frac{\rho}{2-\rho}+\rho(\pi^2+12\log(2))/6}{(\rho-1) \pi/\sin(\pi(\rho-1))}>2
\]
indicating that such a result is no longer available in heavy-tailed settings. 

\begin{proposition}
  \label{lemma:variance-bound-bounded}
  Let $\zeta$ be bounded. Then,
  \[
    \lim_{r\to 0}\frac{\V{\mb P}{\eta^{r}(\zeta)}}{r} \leq 2.
  \]
\end{proposition}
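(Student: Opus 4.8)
The plan is to follow the same reduction as in the proof of Proposition~\ref{lemma:variance-bound} --- rewriting $\V{\mb P}{\eta^r(\zeta)}/r$ as a ratio of the form $\V{\mb P}{\log(1+\tilde\alpha\zeta)}/\tilde r(\tilde\alpha)$ with $\tilde\alpha\downarrow 0$ --- and then, exploiting boundedness of $\zeta$, to evaluate this ratio by a uniform Taylor expansion in $\tilde\alpha$ rather than by the regular-variation machinery needed in the heavy-tailed case.

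First I would dispose of the trivial case in which $\zeta$ is $\mb P$-almost surely constant: then $\eta^r(\zeta)$ is almost surely constant and the ratio vanishes. Assuming $\zeta$ nondegenerate (so $\sigma^2\defn\V{\mb P}{\zeta}>0$), I would reuse verbatim the monotonicity argument from the proof of Proposition~\ref{lemma:variance-bound}: the dual objective in~\eqref{eq:finite:dual} is concave, hence $\alpha\mapsto\int\log(\alpha+u)\,\d\mb P(u)+\log\int\tfrac{1}{\alpha+u}\,\d\mb P(u)$ is nonincreasing, so for all sufficiently small $r$ the optimal dual variable obeys $\alpha^r>0$, the KKT condition reads $r=\tilde r(1/\alpha^r)$, and $\tilde\alpha^r\defn 1/\alpha^r\downarrow 0$ as $r\downarrow 0$; for bounded $\zeta$ the fact $\lim_{\tilde\alpha\downarrow 0}\tilde r(\tilde\alpha)=0$ needed here follows from the expansion below. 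By Proposition~\ref{lemma:finite:dual}, $\V{\mb P}{\eta^r(\zeta)}=\V{\mb P}{\log(\alpha^r+\zeta)}=\V{\mb P}{\log(1+\tilde\alpha^r\zeta)}$, since $\log(\alpha^r+\zeta)$ and $\log(1+\tilde\alpha^r\zeta)$ differ by the constant $\log\alpha^r$. Consequently $\lim_{r\downarrow 0}\V{\mb P}{\eta^r(\zeta)}/r=\lim_{\tilde\alpha\downarrow 0}\V{\mb P}{\log(1+\tilde\alpha\zeta)}/\tilde r(\tilde\alpha)$ as soon as the right-hand limit exists.

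It then remains to identify this limit. Since $\zeta\le M$ a.s.\ for some finite $M$, for $\tilde\alpha<1/(2M)$ both $\log(1+\tilde\alpha\zeta)$ and $(1+\tilde\alpha\zeta)^{-1}$ have uniformly convergent power series, which --- all moments of $\zeta$ being finite --- may be integrated term by term with $O(\tilde\alpha^3)$ remainders. This yields $\V{\mb P}{\log(1+\tilde\alpha\zeta)}=\tilde\alpha^2\sigma^2+O(\tilde\alpha^3)$ and, after combining $\E{\mb P}{\log(1+\tilde\alpha\zeta)}=\tilde\alpha\mu-\tfrac{\tilde\alpha^2}{2}\E{\mb P}{\zeta^2}+O(\tilde\alpha^3)$ with $\log\E{\mb P}{(1+\tilde\alpha\zeta)^{-1}}=-\tilde\alpha\mu+\tilde\alpha^2\E{\mb P}{\zeta^2}-\tfrac{\tilde\alpha^2}{2}\mu^2+O(\tilde\alpha^3)$, also $\tilde r(\tilde\alpha)=\tfrac{\tilde\alpha^2}{2}(\E{\mb P}{\zeta^2}-\mu^2)+O(\tilde\alpha^3)=\tfrac{\tilde\alpha^2}{2}\sigma^2+O(\tilde\alpha^3)$. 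Dividing and using $\sigma^2>0$ gives the ratio $\to\sigma^2/(\sigma^2/2)=2$, so the limit in the statement equals $2$ for nondegenerate bounded $\zeta$ (and $0$ otherwise), which proves the claimed bound. The only real work is bookkeeping the $O(\tilde\alpha^3)$ remainders uniformly, which boundedness of $\zeta$ makes routine, so I do not anticipate a genuine obstacle; an even shorter route avoids recomputing $\tilde r$ by plugging in the already-established lower bound $\tilde r(\tilde\alpha)\ge\tfrac12\V{\mb P}{(1+\tilde\alpha\zeta)^{-1}}$ from~\eqref{eq:tilde-r-var-lb} together with $\V{\mb P}{(1+\tilde\alpha\zeta)^{-1}}=\tilde\alpha^2\sigma^2+O(\tilde\alpha^3)$, which already delivers $\limsup_{r\downarrow0}\V{\mb P}{\eta^r(\zeta)}/r\le 2$.
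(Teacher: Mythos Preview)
Your proposal is correct. The reduction to $\V{\mb P}{\log(1+\tilde\alpha\zeta)}/\tilde r(\tilde\alpha)$ with $\tilde\alpha\downarrow 0$ is exactly the one the paper uses, but thereafter the arguments diverge. The paper never Taylor-expands: it bounds the numerator by $\V{\mb P}{\tilde\alpha\zeta}$ via Lemma~\ref{lemma:variance-formula} (with $g(t)=\log(1+t)$, $M=1$) and bounds the denominator from below via the sharpened Jensen inequality~\eqref{eq:tilde-r-var-lb} together with Lemma~\ref{lemma:variance-formula} (with $g(t)=1/(1+t)$ on $[0,\tilde\alpha B]$), obtaining a ratio at most $2(1+\tilde\alpha B)^{c}\to 2$. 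Your main route instead computes both numerator and denominator to leading order by uniform Taylor expansion, which is slightly more work but yields the stronger conclusion that the limit actually \emph{equals} $2$ for nondegenerate bounded $\zeta$; your ``shorter route'' is closer in spirit to the paper's, though you still use Taylor on $\V{\mb P}{(1+\tilde\alpha\zeta)^{-1}}$ where the paper invokes the Lipschitz variance lemma. Either way the argument goes through without difficulty.
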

\begin{proof}
  Let $0\leq \zeta\leq B<\infty$ which implies $\V{\mb P}{\zeta}\leq B^2/4<\infty$.
  Equation \eqref{eq:tilde-r-var-lb} specializes to
  \[
    \tilde r(\tilde \alpha) \geq  \frac 12 \V{\mb P}{\frac{1}{1 + \tilde \alpha \zeta }}\geq \frac {1}{2(1+\tilde \alpha B)^2} \V{\mb P}{\tilde \alpha \zeta }
  \]
  where we applied Lemma \ref{lemma:variance-formula} with the function $t\mapsto \tfrac{1}{(1+t)}$ for $0\leq t \leq \tilde \alpha B$ with $m=(1+\tilde\alpha B)^{-2}$. Furthermore, we have
  \[
    \V{\mb P}{\log(1 + \tilde \alpha \zeta)} \leq \V{\mb P}{\tilde \alpha \zeta}
  \]
  where we applied Lemma \ref{lemma:variance-formula} with the function $t\mapsto \log(1+t)$ for $0\leq t$ with $M=1$. Consequently,
  \[
    \lim_{\tilde \alpha\to 0} \frac{\V{\mb P}{\log(1 + \tilde \alpha \zeta)}}{\tilde r(\tilde \alpha)}\leq \lim_{\tilde \alpha \to 0} 2(1+\tilde \alpha B)^2 = 2.
  \]
  The statement follows by repeating verbatim the argument made in the proof of Proposition \ref{lemma:variance-bound} following Equation \eqref{eq:tilde-r-var-lb}.  
\end{proof}

\subsection{Disappointment Lower Bound}
\label{ssec:disapp-lower-bound}

We finally prove Theorem \ref{thm-klopt}, by contradiction: assume there is an estimator $\hat e$ that satisfies Equation~\eqref{eq:uniform-exponential-guarantees} and for which we have that
\begin{equation}
  \label{eq:suboptimal}
  \limsup_{n\to\infty} {\mb P}[\hat e_n > \hat e^\KL_{r'(n)}(\mb P_n)]  = p>0.
\end{equation}

That is, this estimator $\hat e$ predicts, with non-vanishing probability $p>0$, larger, less conservative outcomes than the Kullback-Leibler estimator with radius $r'(n)\geq 0$.  

Let $\delta \defn 1- \lim_{n\to\infty}\tfrac{r'(n) n}{\lambda(n)}\in (0,1)$.
Consider now $n_0\geq 1$ so that we have $r'(n)< r(n)= (1-\delta/2) \tfrac{\lambda(n)}{n} < \tfrac{\lambda(n)}{n}$ for all $n\geq n_0$.
We have following Proposition \ref{lemma:finite:dual} that
\begin{align*}
  \hat e^\KL_{r(n)}(\mb P) =  \int u \,\d \mb Q^{r(n)}(u)
\end{align*}
for a distribution $\mb Q^{r(n)}\in \mc P$ with $\KL(\mb P, \mb Q^{r(n)})  = r(n)$. The distributions $\mb Q^{r(n)}$ will serve here as frustrating distributions in the sense that we will show that Equation \eqref{eq:suboptimal} implies
\begin{equation}
  \label{eq:contradiction}
  \liminf_{n\to\infty}\frac{1}{\lambda(n)} \log \E{\mb Q^{r(n)}}{\one{\hat e_n> \E{\mb Q^{r(n)}}{\zeta} }} > -1
\end{equation}
reaching the desired contradiction.

We will do so by employing an exponential change of measure; a common tactic when establishing lower bounds in large deviation theory \cite{dembo2009large}.
Consider indeed the exponentially tilted distributions
\[
  \frac{\d \mb Q^{r(n)}_\eta}{\d \mb Q^{r(n)}}(u) = \exp\left( \eta(u) - \Lambda(\eta, \mb Q^{r(n)}) \right)\quad \forall u\geq 0
\]
with tilting parameter  $\eta:\Re_+ \to\Re \cup \{-\infty\}$ and normalization
\begin{equation}
  \label{eq:mgf}
  \Lambda(\eta, \mb Q^{r(n)})\defn\log\left(\int \exp(\eta(u)) \, \d \mb Q^{r(n)}(u)\right)
\end{equation}
which ensures that $\mb Q^{r(n)}_\eta \in \mc P$. The distribution $\mb Q^{r(n)}_\eta$ is also known as the Esscher transform \citep{escher1932probability} of $\mb Q^{r(n)}$.

To ease notation we define here $\mu^{r(n)} = \E{\mb Q^{r(n)}}{\zeta} = \hat e^\KL_{r(n)}(\mb P)$. We can derive a lower bound on the event of interest using the proposed exponential change of measure argument as follows 
\begin{align}
  & \mb Q^{r(n)}[\hat e_n > \mu^{r(n)} ]\nonumber\\
  = &  \E{{\mb Q}^{r(n)}_\eta}{\one{\hat e_n > \mu^{r(n)}}\frac{\d (\mb Q^{r(n)})^n }{\d ({\mb Q}^{r(n)}_\eta)^n} }\nonumber\\
  = &  \E{{\mb Q}^{r(n)}_\eta}{\one{\hat e_n > \mu^{r(n)}}\exp(n \Lambda(\eta, \mb Q^{r(n)}) - \textstyle n \int \eta(u) \d \mb P_n(u)) }\nonumber\\
  = & \exp(n \Lambda(\eta,  \mb Q^{r(n)}) - n \textstyle \int \eta(u) \d \mb P(u)) \cdot \E{{\mb Q}^{r(n)}_\eta}{\one{\hat e_n > \mu^{r(n)}} \exp(n  \textstyle\int \eta(u) \d[\mb P(u)-\mb P_n(u)] ) }\nonumber\\
  \geq & \exp(n [\Lambda(\eta,  \mb Q^{r(n)}) - \textstyle\int \eta(u) \d \mb P(u)] - \lambda(n)^{3/4}) \cdot \E{{\mb Q}^{r(n)}_\eta}{\one{\hat e_n > \mu^{r(n)},~ \textstyle\int  \eta(u) \d (\mb P_n-\mb P)(u) \leq \lambda(n)^{3/4}/n}}. \label{eq:tilting-lower-bound}
\end{align}

The previous lower bound holds for any tilting $\eta$ and $n\geq 1$. However, as Equation (\ref{eq:suboptimal}) only pertains to the behavior of the estimator for the distribution $\mb P$ we will try to reach our desired contradiction from Equation (\ref{eq:tilting-lower-bound}) by considering a tilting $\eta^{r(n)}$ so that we have ${\mb Q}^{r(n)}_{\eta^{r(n)}} = \mb P$. A standard result is that such tilting can be found as the log-likelihood ratio
\begin{equation}
  \label{eq:tilting}
  \eta^{r(n)}(u) = \log\left(\frac{\d \mb P}{\d \mb Q^{r(n)}}(u)\right) =\log(\alpha^{r(n)}+u) - \log(\nu^{r(n)})
\end{equation}
between $\mb P$ and $\mb Q^{r(n)}$ where the dual variables $\alpha^{r(n)}$ and $\nu^{r(n)}$ are defined in Proposition \ref{lemma:finite:dual}.
Indeed, we can verify that
\begin{align*}
  \frac{\d \mb Q^{r(n)}_{\eta^{r(n)}}}{\d \mb Q^{r(n)}}(u) = & \exp\left( \eta^{r(n)}(u) - \Lambda(\eta^{r(n)}, \mb Q^{r(n)}) \right)
  =  \frac{\exp\left( \log\left(\frac{\d \mb P}{\d \mb Q^{r(n)}}(u)\right) \right)}{\exp(\Lambda(\eta^{r(n)}, \mb Q^{r(n)}))}
  =  \frac{\frac{\d \mb P}{\d \mb Q^{r(n)}}(u)}{\int \frac{\d \mb P}{\d \mb Q^{r(n)}}(u) \, \d \mb Q^{r(n)}(u)}
  =  \frac{\d \mb P}{\d \mb Q^{r(n)}}(u).
\end{align*}
and
\begin{align*}
  \int \eta^{r(n)}(u) \d \mb P(u)- \Lambda(\eta^{r(n)}, \mb Q^{r(n)}) = & \int \log\left(\frac{\d \mb P}{\d \mb Q^{r(n)}}(u)\right) \d \mb P(u)-\log\left(\int \frac{\d \mb P}{\d \mb Q^{r(n)}}(u) \, \d \mb Q^{r(n)}(u)\right)\\
  =&\KL(\mb P, \mb Q^{r(n)})-\log\left(\int 1 \d \mb P(u)\right)=r(n).
\end{align*}

Hence, from Equation (\ref{eq:tilting-lower-bound}) with tilting $\eta^{r(n)}$ we get finally the lower bound
\begin{equation}
   \mb Q^{r(n)}[\hat e_n > \mu^{r(n)} ]
  \geq \exp(-n r(n) [1 - \lambda(n)^{3/4}/(r(n)n)]) \cdot \mb P[\hat e_n > \mu^{r(n)},~ \textstyle\int \eta^{r(n)}(u) \d (\mb P_n-\mb P)(u) \leq \lambda^{3/4}(n)/n] \label{eq:intermediate-1}
\end{equation}

We would now like to leverage Equation (\ref{eq:suboptimal}) to guarantee that the event $\hat e_n > \mu^{r(n)}$ occurs with nonzero probability.
To do so observe that we have 
\begin{align}
  {\rm{KL}}_{\inf}(\mb P_n, {\color{black}\mu^{r(n)}}) = &\left\{\begin{array}{rl}
    \min_{\mb Q} & \KL(\mb P_n, \mb Q)\\
    \st & \int u \d {\mb Q}(u)\leq \mu^{r(n)}, ~\mb Q\in \mc P\\
  \end{array}\right.\nonumber\\
  \geq &\left\{\begin{array}{rl}
    \min_{\mb Q} &  \int \eta^{r(n)}(u) \d \mb P_n(u) - \Lambda(\eta^{r(n)}, \mb Q) \\
    \st & \int u \d {\mb Q}(u)\leq \mu^{r(n)}, ~\mb Q\in \mc P
  \end{array}\right.\nonumber\\
    =& \int \eta^{r(n)}(u) \d (\mb P_n-\mb P)(u)+ \left\{\begin{array}{rl}
     \min_{\mb Q} &  \int \eta^{r(n)}(u) \d \mb P(u) - \Lambda(\eta^{r(n)}, \mb Q) \\
    \st & \int u \d {\mb Q}(u)\leq \mu^{r(n)}, ~\mb Q\in \mc P         
    \end{array}\right.\nonumber\\
  =& \int \eta^{r(n)}(u) \d (\mb P_n-\mb P)(u)+ r(n) +\left\{\begin{array}{rl} 
     \min_{\mb Q} &  - \Lambda(\eta^{r(n)}, \mb Q) \\
    \st & \int u \d {\mb Q}(u)\leq \mu^{r(n)}, ~\mb Q\in \mc P         
  \end{array}\right.\nonumber\\
  =& \int \eta^{r(n)}(u) \d (\mb P_n-\mb P)(u)+ r(n) +\left\{\begin{array}{rl} 
     \min_{\mb Q} &  - \log\left(\int \frac{\alpha^{r(n)}+u}{\nu^{r(n)}} \, \d \mb Q(u)\right) \\
    \st & \int u \d {\mb Q}(u)\leq \mu^{r(n)}=\int u \, \d \mb Q^{r(n)}(u), ~\mb Q\in \mc P         
  \end{array}\right.\nonumber\\
  \geq & \int \eta^{r(n)}(u) \d (\mb P_n-\mb P)(u)+ r(n)  - \log\left(\int \frac{\alpha^{r(n)}+u}{\nu^{r(n)}} \, \d \mb Q^{r(n)}(u)\right)\nonumber\\
  = & \int \eta^{r(n)}(u) \d (\mb P_n-\mb P)(u)+ r(n)  - \log\left(\int \frac{\d \mb P}{\d \mb Q^{r(n)}}(u) \, \d \mb Q^{r(n)}(u)\right)\nonumber\\
  = & \int \eta^{r(n)}(u) \d (\mb P_n-\mb P)(u) + r(n). \label{eq:lower-bound-KL}
\end{align}
Here, the first inequality follows from the Donsker-Varadhan representation of the Kullback-Leibler divergence \citep{donsker1975asymptotic} and where $\Lambda$ is defined in Equation (\ref{eq:mgf}). The third equality is a direct result of Equation \eqref{eq:tilting}. Finally, the second inequality follows from the fact that the logarithm is a nondecreasing function.
From inequality \eqref{eq:lower-bound-KL} we have that when the event $\int \eta^{r(n)}(u) \d (\mb P_n-\mb P)(u)\geq -\delta/4 r(n)$ occurs then 
\begin{align}
  \label{eq:KLinf_lb}
  {\rm{KL}}_{\inf}(\mb P_n, {\color{black}\mu^{r(n)}}) \geq &  \int \eta^{r(n)}(u) \d (\mb P_n-\mb P)(u) + r(n)
  \geq  -\delta/4 r(n)+r(n)= (1-\delta/4)r(n).
\end{align}
Observe furthermore that for any $\delta\in (0,1)$ we have
\[
  \lim_{n\to\infty} \frac{r'(n)}{(1-\delta/2)r(n)} = \frac{1}{(1-\delta/2)^{2}}\lim_{n\to\infty} \frac{r'(n) n}{\lambda(n)}  < \frac{1}{(1-\delta)} \lim_{n\to\infty} \frac{r'(n) n}{\lambda(n)} = 1.
\]
We may hence consider $n_0'\geq n_0$ sufficiently large so that $r'(n)\leq (1-\delta/2)r(n)$ and $\tfrac{\delta(1-\delta/2)\lambda(n)^{1/4}}{4}\geq 1$ for all $n\geq n_0'$.
Consequently, for $n\geq n'_0$ we have that the event $\int \eta^{r(n)}(u) \d (\mb P_n-\mb P)(u) \geq -\delta/4 r(n)$ implies
\begin{equation*}
  \int \eta^{r(n)}(u) \d (\mb P_n-\mb P)(u)\geq -\lambda(n)^{3/4}/n \implies \hat e^{\KL}_{r'(n)}(\mb P_n)\geq \hat e^{\KL}_{(1-\delta/2)r(n)}(\mb P_n) \geq \mu^{r(n)}.
\end{equation*}
Here the final inequality in the implication is due to inequality \eqref{eq:KLinf_lb} and $\lambda(n)^{3/4}/n\leq\delta/4 r(n)$. Combining this with our lower bound \eqref{eq:intermediate-1} gives
\begin{align*}
  \mb Q^{r(n)}[\hat e_n> \mu^{r(n)} ] \geq & \exp(-n r(n) [1 - \lambda(n)^{3/4}/(r(n)n)]) \cdot \mb P\left[\hat e_n >\hat e^{\KL}_{r'(n)}(\mb P_n) ,~ \textstyle\abs{\int \eta^{r(n)}(u) \d (\mb P_n-\mb P)(u)} \leq \lambda^{3/4}(n)/n\right]\\
  \geq & \exp(-n r(n) [1 - \lambda^{-1/4}(n)\cdot \tfrac{\lambda(n)}{(r(n)n)}]) \cdot \\
  & \quad \left(\mb P\left[\hat e_n > \hat e^{\KL}_{r'(n)}(\mb P_n)\right] + \mb P\left[\abs{ \textstyle \frac 1n\sum^n_{i=1} \eta^{r(n)}(\zeta_i) - \int \eta^{r(n)}(u) \d\mb P(u)} \leq \lambda(n)^{3/4}/n\right]-1\right).
\end{align*}

Chebyshev's inequality guarantees that
\begin{align*}
 & \mb P\left[\abs{ \textstyle \frac 1n\sum^n_{i=1} \eta^{r(n)}(\zeta_i) - \int \eta^{r(n)}(u) \d\mb P(u)} > \lambda(n)^{3/4}/n\right]\\
  \leq & \frac{\V{\mb P}{\frac 1n\sum^n_{i=1} \eta^{r(n)}(\zeta_i)}}{\lambda(n)^{3/2}/n^2}
  = \frac{\V{\mb P}{\sum^n_{i=1} \eta^{r(n)}(\zeta_i)}}{\lambda(n)^{3/2}}
  =  \frac{n\V{\mb P}{\eta^{r(n)}(\zeta)}}{\lambda(n)^{3/2}}
  =  \frac{\V{\mb P}{\eta^{r(n)}(\zeta)}}{r(n)}\frac{r(n)n}{\lambda(n)^{3/2}}
  =  \frac{\V{\mb P}{\eta^{r(n)}(\zeta)}}{r(n)}\frac{(1-\delta/2)}{\lambda(n)^{1/2}}.
\end{align*}
Hence, if $r(n)\to 0$, we have following Proposition \ref{lemma:variance-bound} that
\[
  \lim_{n\to\infty} \mb P\left[\abs{ \textstyle \frac 1n\sum^n_{i=1} \eta^{r(n)}(\zeta_i) - \int \eta^{r(n)}(u) \d\mb P(u)} > \lambda(n)^{3/4}/n\right]\leq \lim_{n\to\infty} \frac{\V{\mb P}{\eta^{r(n)}(\zeta)}}{r(n)}\frac{(1-\delta/2)}{\lambda(n)^{1/2}} = 0.
\]
If $\lim_{n\to\infty} r(n)>0$, then the previous remains true (and in fact we do not need Proposition \ref{lemma:variance-bound}). Hence, using the premise \eqref{eq:suboptimal} we now finally arrive at the claimed contradiction in Equation (\ref{eq:contradiction}). Indeed, observe
\begin{align*}
  & \liminf_{n\to\infty}\frac{1}{\lambda(n)} \log \E{\mb Q^{r(n)}}{\one{\hat e_n > \E{\mb Q^{r(n)}}{\zeta} }}\\
  \geq & \liminf_{n\to\infty} \frac{-n r(n) [1 - \lambda^{-1/4}(n)\cdot \tfrac{\lambda(n)}{(r(n)n)}]}{\lambda(n)} \\
  & \quad +  \liminf_{n\to\infty} \frac{\log \left(\mb P\left[\hat e_n > \hat e^{\KL}_{r'(n)}(\mb P_n)\right] + \mb P\left[\abs{ \textstyle \frac 1n\sum^n_{i=1} \eta^{r(n)}(\zeta_i) - \int \eta^{r(n)}(u) \d\mb P(u)} \leq \lambda(n)^{3/4}/n\right]-1\right)}{\lambda(n)}\\
  = & \liminf_{n\to\infty} \frac{-n r(n) [1 - \lambda^{-1/4}(n)\cdot \tfrac{\lambda(n)}{(r(n)n)}]}{\lambda(n)} \\
  & \quad +  \liminf_{n\to\infty} \frac{\log(p)}{\lambda(n)}\\ 
  = & \liminf_{n\to\infty} \frac{-n r(n) [1 - \frac{1}{(1-\delta/2)\lambda^{1/4}(n)}]}{\lambda(n)}\\  
  = & \liminf_{n\to\infty} \frac{-n r(n)}{\lambda(n)} \cdot \liminf_{n\to\infty} 1 - \frac{1}{(1-\delta/2)\lambda^{1/4}(n)}\\
  = & -(1-\delta/2)>-1.
\end{align*}
\qed

\bibliography{references}

\appendix

\section{Proofs}
\label{sec-proofs}

\subsection{Proofs in Section \ref{sec-challenges}}

{\color{black}
\subsubsection{Proof of Proposition \ref{prop-wass-lb}}

  The case $\gamma=0$ follows from the case $\gamma>0$ by monotonicity: $\hat e^{\W}_{n,0}\geq \hat e^{\W}_{n,\gamma'}$ for any $\gamma'>0$, so $\Pr[\hat e^{\W}_{n,0}>\mu]\geq \Pr[\hat e^{\W}_{n,\gamma'}>\mu]$.

  For $\gamma>0$, choose $\Delta>0$ such that $d(\mu+\Delta,\mu)>\gamma$, and by continuity of $d$ choose $\delta>0$ such that $d(\mu+\Delta,\mu+\delta)>\gamma$.
  Set $\mu'\defn\mu+\delta$ and define the events $E_i\defn\{\zeta_i>(\mu+\Delta)n\}$ and $E\defn\bigcup_{i=1}^n E_i$.
  For any feasible $\mb Q$ and any coupling $\mb T$ between $\mb P_n$ and $\mb Q$, let $\mb T_j$ be the conditional coupling for $\zeta_j$ with marginal mean $\mu_j\defn\int v\,\d\mb T_j(v)$.
  On $E_i$, the constraint $\frac{1}{n}\mu_i+\frac{1}{n}\sum_{j\neq i}\mu_j\leq\mu'$ with $\mu_j\geq 0$ requires $\mu_i\leq n\mu'$, and hence
  \begin{align*}
       \int d(u,v)\,\d\mb T(u,v)
    &= \frac 1n  \sum_{j=1}^n \int d(\zeta_j,v)\,\d\mb T_j(v) 
    \geq \frac{1}{n}\int d(\zeta_i,v)\,\d\mb T_i(v)\\
    &\geq \frac{1}{n}d(\zeta_i,\mu_i)
    \geq \frac{1}{n}d(\zeta_i,n\mu')
    \geq d(\zeta_i/n,\mu')
    \geq d(\mu+\Delta,\mu')
    > \gamma.
  \end{align*}
   
  The first inequality follows from $d\geq 0$; the second is Jensen's applied to the convex map $v\mapsto d(\zeta_i,v)$; the third uses $\mu_i\leq n\mu'<\zeta_i$ and that $v\mapsto d(\zeta_i,v)$ is nonincreasing on $(-\infty,\zeta_i]$ (convex with minimum $0$ at $v=\zeta_i$); the fourth uses the convexity bound $d(tu,tv)\geq t\,d(u,v)$ for $t\geq 1$ (derived below) with $t=n$; and the fifth uses $\zeta_i/n>\mu+\Delta>\mu'$ and that $u\mapsto d(u,\mu')$ is nondecreasing on $[\mu',\infty)$ (convex with minimum $0$ at $u=\mu'$).
  The bound $d(tu,tv)\geq t\,d(u,v)$ for $t\geq 1$ follows from $d(0,0)=0$: the function $\varphi(t)=d(tu,tv)$ is convex in $t$ with $\varphi(0)=0$, so $\varphi(t)/t$ is nondecreasing and $d(tu,tv)\geq t\,d(u,v)$.
  Since $d(\mu+\Delta,\mu')>\gamma$ is independent of $\mb T$ and $\mb Q$, every $\mb Q$ with $\W_d(\mb P_n,\mb Q)\leq\gamma$ has mean larger then $\mu'>\mu$, giving $\hat e^{\W}_{n,\gamma}\geq\mu'>\mu$ on $E$.
  Hence,
  \[
    \Pr\!\left[\hat e^{\W}_{n,\gamma}>\mu\right] \geq \Pr[E] = n\,{\mb P}[\zeta>(\mu+\Delta)n](1+o(1)) = \exp\!\left(-(1+o(1))(\rho-1)\log n\right),
  \]
  where the asymptotic equality uses $\Pr[E]=1-(1-{\mb P}[\zeta>(\mu+\Delta)n])^n = n\,{\mb P}[\zeta>(\mu+\Delta)n](1+o(1))$ and regular variation of $\zeta$.\qed
}

\subsubsection{Proof of Proposition \ref{prop-trim-mean}}

We only prove Part (ii), as Part (i) is a (more explicit) special case.
Using the Chernoff bound, 
\begin{equation*}
     {\mb P} [\textstyle \sum_{i=1}^n \zeta_i \wedge r(n)^{-1/a} > cnr(n)^{(a-1)/a} + \mu n ]  \leq 
     \exp\left( - n\left[s (\mu + cr(n)^{(a-1)/a}) - \log \E{}{e^{s ( \zeta \wedge r(n)^{-1/a}) }}  \right]    \right).
\end{equation*}
Now, suppose that $s = ur(n)^{1/a}$ with $0\leq u\leq a$. 
Then, using \citet[Equation (2.8)]{janssen2024fuknagaev},
\begin{align}
 {\mb E}[e^{s ( \zeta \wedge r(n)^{-1/a}) }] \leq & 1 + s\mu + s^a  \frac 2{2+a}e^a A\leq
1 + s\mu + s^a C. \label{eq-fn}
\end{align}
Plugging this into the Chernoff bound and using $\log (1+x) \leq x$, we get
\begin{align}
     {\mb P} [\textstyle\sum_{i=1}^n \zeta_i \wedge r(n)^{-1/a} > cn r(n)^{(a-1)/a} + \mu n ] \nonumber  &\leq 
     \exp\{ - n[s  cr(n)^{(a-1)/a} - s^aC]
    \}\\
    &= \exp\{ - n r(n) [uc-Cu^a]\}. \label{eq-truncub}
\end{align}
We now choose 
\begin{align*}
    c&=c_a= (a-1)^{-(a-1)/a}aC^{1/a},\\
    u& = (c/(aC))^{1/(a-1)}.
\end{align*}
Below, we show that
these choices yield  $uc-Cu^a=1$ and $u\leq a$ which was required in Equation (\ref{eq-fn}), so that the proof follows from Equation (\ref{eq-truncub}).
To show that $u\leq a$, we note that, since 
$C\geq 1/((a-1)a^a)$ by construction,
\begin{align*}
    u &=(c/(aC))^{1/(a-1)}\\ &=(a-1)^{-1/a} \left(C^{(1-a)/a}\right)^{1/(a-1)}\\ &=(a-1)^{-1/a} C^{-1/a}\\ &\leq (a-1)^{-1/a}  
    \left((a-1)a^a\right)^{1/a}\\ &=a.
\end{align*}
Finally, observe that
\[
uc = ((a-1)C)^{-1/a} (a-1)^{-(a-1)/a}a C^{1/a}=\frac a{a-1},
\]
and
\[
Cu^a = CC^{-1}(a-1)^{-1}=\frac 1{a-1},
\]
so that
$$uc-Cu^a= \frac a{a-1}- \frac 1{a-1}=1.
$$\qed

\subsubsection{Proof of Proposition \ref{prop-variance-ub}}

Remark that we have the sequence of equalities
\begin{align*}
  \Pr\left[\hat \mu_n -   \sqrt{2 r(n)} \hat\sigma_n > \mu \right] = & 
                                                                       \Pr\left[\E{\mb P_n}{\zeta} - \sqrt{2 r(n)} \sqrt{\V{\mb P_n}{\zeta}} > \mu \right]
                                                                       =  \Pr\left[\E{\mb P_n}{\tilde \zeta} - \sqrt{2 r(n)} \sqrt{\V{\mb P_n}{\tilde \zeta}} > 0 \right]\\
  = & \Pr\left[ \sqrt{n}\tfrac{\E{\mb P_n}{\tilde \zeta}}{\sqrt{\V{\mb P_n}{\tilde \zeta}}} > \sqrt{2 \lambda(n)} \right]%
\end{align*}
where the random variable $\tilde \zeta=\zeta-\mu$ has zero mean and bounded variance $\V{\mb P}{\tilde \zeta}=\V{\mb P}{ \zeta}\leq {\mb E}[\zeta^2]<\infty$.  \cite{shao1997self} defines now $S_n\defn \sum_{i=1}^n \tilde \zeta_i$ and $V_n^2 \defn\sum_{i=1}^n \tilde \zeta^2_i$ where $\tilde \zeta_i = \zeta_i-\mu$ for all $i\in [1, \dots, n]$. We have clearly
\(
n \V{\mb P_n}{\tilde \zeta} = V^2_n - S^2_n/n
\)
and consequently
\[
  \Pr\left[\hat \mu_n -   \sqrt{2 r(n)} \hat\sigma_n > \mu \right] = \Pr\left[ \tfrac{\left(\tfrac{S_n}{V_n}\right)}{\sqrt{1- \left(\tfrac{S_n}{V_n}\right)^2/n}} > \sqrt{2 \lambda(n)} \right] = \Pr\left[\tfrac{S_n}{V_n}> \sqrt{2 \lambda(n)} \sqrt{\frac{n}{n+2 \lambda(n)}} \right].
\]
Using now \citet[Theorem 3.1]{shao1997self} with $\lambda(n)\uparrow \infty$ and $\lambda(n)=o(n)$ we get
\begin{align*}
  \Pr\left[\tfrac{S_n}{V_n}> \sqrt{2 \lambda(n)} \sqrt{\frac{n}{n+2 \lambda(n)}} \right] = & \exp\left(- \frac{n\lambda(n)}{n+2 \lambda(n)}+o\left( \frac{n\lambda(n)}{n+2 \lambda(n)}\right)\right)\\
  = & \exp\left(-\lambda(n)\left( \frac{n}{n+2 \lambda(n)}+o\left( \frac{n}{n+2 \lambda(n)}\right) \right)\right)\\
  = & \exp\left(-\lambda(n)\left( 1+o(1) \right)\right)
\end{align*}
establishing the claim.\qed

\subsubsection{Proof of Proposition \ref{prop-variance-lb}}

Define $s_n(a) = a \sqrt{n/r(n)}$ and define the event 
\begin{equation}
   A_n(s,\epsilon) = \{ \zeta_n > s ; \zeta_i < \epsilon s, i<n\}. 
\end{equation}
\begin{lemma}
  \label{lemma:event-A}
    \begin{equation}
        \Pr [A_n(s_n(a), \epsilon)] = (1+o(1)) {\mb P} [ \zeta > s_n(a)].
    \end{equation}
\end{lemma}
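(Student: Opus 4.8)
The plan is to exploit independence to factor the probability of $A_n(s_n(a),\epsilon)$ and then show that the ``no other large value'' factor is asymptotically one. Since $\zeta_1,\dots,\zeta_n$ are i.i.d.\ with common law ${\mb P}$, the event $A_n(s,\epsilon) = \{\zeta_n > s\} \cap \bigcap_{i<n}\{\zeta_i < \epsilon s\}$ is an intersection of events depending on disjoint coordinates, so
\[
  \Pr[A_n(s,\epsilon)] = {\mb P}[\zeta > s]\,\big({\mb P}[\zeta < \epsilon s]\big)^{n-1}.
\]
It therefore suffices to prove that $\big({\mb P}[\zeta < \epsilon s_n(a)]\big)^{n-1} \to 1$. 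The bound $\big({\mb P}[\zeta < \epsilon s_n(a)]\big)^{n-1}\le 1$ is immediate, and Bernoulli's inequality gives the matching lower bound $\big({\mb P}[\zeta < \epsilon s_n(a)]\big)^{n-1} = \big(1-{\mb P}[\zeta \ge \epsilon s_n(a)]\big)^{n-1} \ge 1-(n-1)\,{\mb P}[\zeta \ge \epsilon s_n(a)]$, so everything reduces to showing $n\,{\mb P}[\zeta \ge \epsilon s_n(a)] \to 0$.

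For this last step I would invoke the standing assumption ${\mb E}[\zeta^2]<\infty$ of Proposition~\ref{prop-variance-lb} together with Chebyshev's inequality. Since $s_n(a) = a\sqrt{n/r(n)}$, this yields
\[
  n\,{\mb P}[\zeta \ge \epsilon s_n(a)] \;\le\; n\,\frac{{\mb E}[\zeta^2]}{\epsilon^2 a^2\,(n/r(n))} \;=\; \frac{{\mb E}[\zeta^2]}{\epsilon^2 a^2}\, r(n) \;\longrightarrow\; 0,
\]
because $r(n)\to 0$ by hypothesis. Combining this with the factorization and the two-sided sandwich above gives $\big({\mb P}[\zeta < \epsilon s_n(a)]\big)^{n-1}\to 1$, hence $\Pr[A_n(s_n(a),\epsilon)] = (1+o(1))\,{\mb P}[\zeta>s_n(a)]$, as claimed. (If ${\mb P}[\zeta > s_n(a)]=0$ for some $n$, both sides vanish and there is nothing to prove; under the regular variation assumption in force in Proposition~\ref{prop-variance-lb} this never happens.)

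The argument is essentially a routine computation, so there is no serious obstacle; the only place the hypotheses enter is the estimate $n\,{\mb P}[\zeta \ge \epsilon s_n(a)]\to 0$, and the point worth noting is merely that a finite second moment is exactly strong enough to beat the $n/r(n)$ scale of the threshold $s_n(a)$ when $r(n)\to 0$. In fact the same reasoning works for any threshold sequence $s_n$ with $n/s_n^2 \to 0$ and does not use regular variation at all.
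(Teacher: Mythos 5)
Your proposal is correct and follows essentially the same approach as the paper: factorize by independence, then show ${\mb P}[\zeta < \epsilon s_n(a)]^{n-1}\to 1$ via the Chebyshev estimate ${\mb P}[\zeta\ge\epsilon s_n(a)]\le (a\epsilon)^{-2}{\mb E}[\zeta^2]\,r(n)/n = o(1/n)$. The paper's proof is identical in substance; you merely spell out the Bernoulli-inequality step that the paper leaves implicit.
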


\begin{proof}
   Write the probability as a product. 
   The term ${\mb P}[\zeta < \epsilon s_n(a)]^{n-1}$ converges to $1$, because ${\mb P}[\zeta \geq \epsilon s_n(a)] = o(1/n)$. This is in turn implied by 
   the fact that $${\mb P}[\zeta > \epsilon s_n(a)] \leq (a\epsilon)^{-2} {\mb E}[\zeta^2] r(n)/n=o(1/n)$$ as $r(n)\rightarrow 0$.
   \end{proof}

   \begin{lemma}
  \label{lemma:event-A-implies-conservatism}
\begin{equation}
 {\mb P} [\hat e^{vr}_{n}  <\mu-b \mid A_n(s_n(b (1+\epsilon)/\sqrt{2}  ), \epsilon) ] \rightarrow 1.    
\end{equation}    
\end{lemma}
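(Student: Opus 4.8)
The plan is to condition on the event $A_n\defn A_n(s_n(a),\epsilon)$ with $a=b(1+\epsilon)/\sqrt 2$ and to show that, on this event, the variance correction in $\hat e^{vr}_n=\hat\mu_n-\sqrt{2r(n)}\hat\sigma_n$ always overcompensates for the single outlier. First I would identify the conditional law: by independence, under $\mb P[\,\cdot\mid A_n]$ the variable $\zeta_n$ has the law of $\zeta$ conditioned on $\{\zeta>s_n(a)\}$, while $\zeta_1,\dots,\zeta_{n-1}$ are i.i.d.\ copies of $\zeta$ conditioned on $\{\zeta<\epsilon s_n(a)\}$, and these $n$ variables are mutually independent. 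Since $r(n)\to0$ the threshold $s_n(a)=a\sqrt{n/r(n)}$, and hence also $\epsilon s_n(a)$, tends to infinity; consequently $\E{\mb P}{\zeta\mid\zeta<\epsilon s_n(a)}\to\mu$ and $\E{\mb P}{\zeta^2\mid\zeta<\epsilon s_n(a)}$ stays bounded. Writing $\bar\zeta_{n-1}\defn\frac1{n-1}\sum_{i<n}\zeta_i$, a Chebyshev bound on the truncated (triangular-array) average then yields $\bar\zeta_{n-1}\to\mu$ in conditional probability.

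The algebraic heart of the argument is the identity $\zeta_n-\hat\mu_n=\tfrac{n-1}{n}\bigl(\zeta_n-\bar\zeta_{n-1}\bigr)$, obtained from $\hat\mu_n=\tfrac1n\zeta_n+\tfrac{n-1}{n}\bar\zeta_{n-1}$. Retaining only the $n$-th summand in the sample variance gives $\hat\sigma_n^2\ge\tfrac1n(\zeta_n-\hat\mu_n)^2=\tfrac{(n-1)^2}{n^3}(\zeta_n-\bar\zeta_{n-1})^2$, and since $\zeta_n>s_n(a)\to\infty$ while $\bar\zeta_{n-1}\to\mu$, with conditional probability tending to one we have $\zeta_n>\bar\zeta_{n-1}$ and therefore $\hat\sigma_n\ge\tfrac{n-1}{n^{3/2}}(\zeta_n-\bar\zeta_{n-1})$. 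Substituting into the definition of $\hat e^{vr}_n$ and collecting the terms proportional to $\zeta_n$,
\[
 \hat e^{vr}_n\ \le\ \zeta_n\Bigl(\tfrac1n-\tfrac{n-1}{n^{3/2}}\sqrt{2r(n)}\Bigr)+\tfrac{n-1}{n}\bar\zeta_{n-1}\Bigl(1+\tfrac{\sqrt{2r(n)}}{\sqrt n}\Bigr).
\]
The coefficient of $\zeta_n$ equals $\tfrac1n\bigl(1-(1-\tfrac1n)\sqrt{2\lambda(n)}\bigr)$, which is negative for all large $n$. On that event, since $\zeta_n\ge s_n(a)$, I can replace $\zeta_n$ by $s_n(a)$; using $s_n(a)/n=a/\sqrt{\lambda(n)}$ and $s_n(a)\sqrt{2r(n)}/\sqrt n=a\sqrt2=b(1+\epsilon)$ this produces
\[
 \hat e^{vr}_n\ \le\ \tfrac{a}{\sqrt{\lambda(n)}}-\tfrac{n-1}{n}\,b(1+\epsilon)+\tfrac{n-1}{n}\bar\zeta_{n-1}\Bigl(1+\tfrac{\sqrt{2r(n)}}{\sqrt n}\Bigr),
\]
whose right-hand side converges in conditional probability to $\mu-b(1+\epsilon)$. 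As $\mu-b(1+\epsilon)<\mu-b$, it follows that $\mb P[\hat e^{vr}_n<\mu-b\mid A_n]\to1$.

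The hard part is controlling an \emph{arbitrarily} large outlier $\zeta_n$: such a value inflates $\hat\mu_n$ by $\zeta_n/n$, and one must check that the accompanying inflation of $\sqrt{2r(n)}\hat\sigma_n$, which is of order $\sqrt{2r(n)}\,\zeta_n/\sqrt n=\sqrt{2\lambda(n)}\,\zeta_n/n$, strictly dominates it --- this is precisely the negativity of the coefficient of $\zeta_n$ above, and it is here (and only here) that the scaling $\lambda(n)=n\,r(n)\to\infty$ enters, as is appropriate for the regime in which Proposition~\ref{prop-variance-lb} is informative (e.g.\ $r(n)=n^{-\beta}$ with $\beta\in(0,1)$). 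The remaining steps --- the conditional weak law of large numbers for the truncated sample and the identity for $\zeta_n-\hat\mu_n$ --- are routine.
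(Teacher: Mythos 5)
Your proposal is correct and follows essentially the same route as the paper's proof: both retain only the $n$-th summand in $\hat\sigma_n^2$ to get a lower bound on the variance correction, observe that after collecting terms the coefficient of $\zeta_n$ in the resulting upper bound on $\hat e^{vr}_n$ is negative for large $n$ (using $\lambda(n)=nr(n)\to\infty$), replace $\zeta_n$ by $s_n(a)$, and let the truncated sample mean converge to $\mu$. The only cosmetic difference is that you work directly with $\bar\zeta_{n-1}$ and note $\zeta_n>\bar\zeta_{n-1}$ with high conditional probability, whereas the paper removes the absolute value deterministically via $-|a-b|\leq|b|-|a|$ and nonnegativity of $\zeta_n$; your explicit triangular-array Chebyshev argument is a small gain in rigor over the paper's invocation of the law of large numbers, but the substance is identical.
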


\begin{proof}
    On the event $A_n(s_n(b (1+\epsilon)/\sqrt{2}), \epsilon)$, we have
     \begin{align*}
        \hat e^{vr}_{n} &=  \hat \mu_{n}  - \sqrt{2r(n)} \hat \sigma_n\\
        &\leq \hat \mu_{n}  -\sqrt{2r(n)/n} | \zeta_n - \hat \mu_n| \\
        &\leq \hat \mu_{n} (1+\sqrt{2r(n)/n} ) - \sqrt{2r(n)/n}  \zeta_n\\
        &= \hat \mu_{n-1} \frac{n-1}{n}  (1+\sqrt{2r(n)/n} ) - \zeta_n (\sqrt{2r(n)/n} - (1+\sqrt{2r(n)/n})/n)\\
        &\leq  \hat \mu_{n-1} \frac{n-1}{n}  (1+\sqrt{2r(n)/n} ) -  b (1+\epsilon) \sqrt{ n/2r(n)} (\sqrt{2r(n)/n} - (1+\sqrt{2r(n)/n})/n )\\
         &\rightarrow \mu-b(1+\epsilon)< \mu-b,
    \end{align*}
where in the second inequality we used the (triangle) inequality $-|a-b| \leq |b|-|a|$ and nonnegativity of $\zeta_n$, and the convergence follows from 
 the law of large numbers and the property $\lim_{n\to\infty}\lambda(n)=\infty$.
\end{proof}

\begin{proof}[Proof of Proposition \ref{prop-variance-lb}]
    Using symmetry and applying Lemma \ref{lemma:event-A} and \ref{lemma:event-A-implies-conservatism},  we obtain 
    \begin{align*}
     {\mb P} [\hat e^{vr}_{n} < \mu-b ] &\geq n {\mb P} [\hat e^{vr}_{n} < \mu-b ; A_n(s_n(b (1+\epsilon) / \sqrt{2}), \epsilon) ]\\
&=           n {\mb P}[ A_n(s_n(b (1+\epsilon)/\sqrt{2}), \epsilon) ]  {\mb P} [\hat e^{vr}_{n} <\mu- b \mid A_n(s_n(b (1+\epsilon)/\sqrt{2}), \epsilon)  ]\\
&= (1+o(1))  n 
{\mb P} [ \zeta > s_n( b (1+\epsilon)/\sqrt{2})]\\
& =(1+o(1))  n 
{\mb P} [ \zeta > b(1+\epsilon) \sqrt{ n/(2r(n))}]
    \end{align*}
Since the above line of argumentation is valid for every $\epsilon >0$, the proof is complete.
\end{proof}

\subsection{Proof of Proposition \ref{prop-kllb}}

We first lower bound $\hat e^{\KL}_n$ with $\hat e^{\KL, u(n)}_n$ which is obtained by truncating the revenues $\zeta_i$ as $\zeta_i \wedge u(n)$ where $u(n)\rightarrow\infty$ is  specified in more detail below.
Similarly, define the total variation estimator
\begin{align}
  \label{eq:tv-estimator}
\hat e^{\TV, u(n)}_{n} \defn & \left\{
  \begin{array}{rl}
    \min & \int u \,\d \mb Q(u)\\
    \st & \mb Q\in \mc P, ~\TV({\mb P}'_n, \mb Q)\leq \sqrt{r(n)/2}.
  \end{array}\right.
\end{align}
where $\mb P'_n$ denotes the empirical distribution of the truncated revenues $\zeta_i \wedge u(n)$ and hence also $\hat e^{\TV, u(n)}_{n}\leq \hat e^{\TV}_{n}$ where $\hat e^{\TV}_{n}$ defined as in Section \ref{ssec:wass-dro-estim}.
By Pinsker's inequality we have $\hat e^{\TV, u(n)}_{n}\leq \hat e^{\KL, u(n)}_{n}$ and hence ${\mb P}[\hat e^{\KL, u(n)}_n <a] \leq {\mb P}[\hat e^{\TV, u(n)}_{n}<a]$. A moment reflection yields that an optimal solution $\mb Q_n$ in \eqref{eq:tv-estimator} is a discrete measure on $[0, \infty)$ which removes mass $s(n) = \sqrt{r(n)/2}$ of the largest revenues scenarios and reassigns it to the zero reward scenario.
We have
$\hat e^{\TV, u(n)}_{n}\geq \frac 1n \sum_{i=1}^n  \zeta_i \wedge u(n) - u(n) s(n)$.

Combining these observations we obtain
\begin{align*}
    {\mb P}\left[  \hat e^{\KL}_n <a \right]
      &\leq {\mb P}\left[  \hat e^{\KL, u(n)}_n <a \right] \leq {\mb P}\left[  \hat e^{\TV, u(n)}_{n} <a \right] \leq  {\mb P} \left[ \frac 1n \sum_{i=1  }^n  \zeta_i \wedge u(n) < a + u(n) s(n) \right]
\end{align*}
 Now, let $u(n)\rightarrow\infty$ slow enough such that $v(n)=s(n) u(n)\rightarrow 0$. 
The proof is then  completed by applying the following lemma: 

\begin{lemma}
\label{lemma-truncatedlefttail}
    Let $u(n)\rightarrow\infty$, $v(n) \geq 0$, $v(n)\rightarrow 0$, and $b>0$. Then,
    \begin{equation*}
        {\mb P} \left[\frac 1n \sum_{i=1}^n \zeta_i \wedge u(n) < \mu-b+ v(n)  \right] = e^{-I(b)n(1+o(1)) }.
    \end{equation*}
\end{lemma}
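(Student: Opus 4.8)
\emph{Proof plan.} The statement is an exponential (Cram\'er-type) estimate for the lower deviations of a truncated empirical mean, and the plan is to show that replacing $\zeta_i$ by $\zeta_i\wedge u(n)$ with $u(n)\to\infty$ and perturbing the threshold by $v(n)\to 0$ leaves the rate function $I(b)$ from \eqref{sample-mean-left-tail} unchanged. I would prove the two matching inequalities $\limsup_n -\tfrac1n\log {\mb P}[\cdot]\le I(b)$ and $\liminf_n -\tfrac1n\log {\mb P}[\cdot]\ge I(b)$ separately.

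For the first inequality (which bounds the probability from below), the key observation is that truncation only lowers each summand, so $\tfrac1n\sum_{i=1}^n\zeta_i\wedge u(n)\le\tfrac1n\sum_{i=1}^n\zeta_i$; since moreover $v(n)\ge 0$, this gives the event inclusion $\{\tfrac1n\sum_i\zeta_i<\mu-b\}\subseteq\{\tfrac1n\sum_i\zeta_i\wedge u(n)<\mu-b+v(n)\}$. Thus the probability in the lemma dominates ${\mb P}[\hat\mu_n<\mu-b]=e^{-I(b)n(1+o(1))}$, where the last equality is exactly \eqref{sample-mean-left-tail}. No further work is needed for this direction.

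For the second inequality (which bounds the probability from above), I would run a Chernoff argument with a fixed parameter $s>0$: by independence of the $\zeta_i$,
\[
  {\mb P}\Big[\tfrac1n\textstyle\sum_{i=1}^n\zeta_i\wedge u(n)<\mu-b+v(n)\Big]\le\exp\Big(n\big[s\,(\mu-b+v(n))+\log\E{\mb P}{\exp(-s\,(\zeta\wedge u(n)))}\big]\Big).
\]
Because $0\le\exp(-s\,(\zeta\wedge u(n)))\le 1$ and $\zeta\wedge u(n)\uparrow\zeta$ pointwise as $u(n)\to\infty$, dominated convergence gives $\E{\mb P}{\exp(-s\,(\zeta\wedge u(n)))}\to\E{\mb P}{\exp(-s\zeta)}$; combined with $v(n)\to 0$, this yields $\liminf_n -\tfrac1n\log {\mb P}[\cdot]\ge (b-\mu)s-\log\E{\mb P}{\exp(-s\zeta)}$ for every $s>0$. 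Taking the supremum over $s>0$ produces $I(b)$, and combining with the first direction gives $-\tfrac1n\log {\mb P}[\cdot]\to I(b)$, which is the claimed identity.

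The argument is short; the only place that needs a bit of care is the interchange of limit and expectation in the Chernoff exponent, which is precisely what the dominated (in fact monotone) convergence of the truncated moments handles, and I do not foresee a genuine obstacle. It is worth emphasising that no moment assumption on $\zeta$ enters anywhere: $\E{\mb P}{\exp(-s\zeta)}\le 1$ for every $s\ge 0$ since $\zeta\ge 0$, so all quantities above are finite throughout.
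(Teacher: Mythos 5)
Your proposal is correct and follows essentially the same two-sided argument as the paper: the lower bound on the probability via the event inclusion $\{\hat\mu_n<\mu-b\}\subseteq\{\tfrac1n\sum_i\zeta_i\wedge u(n)<\mu-b+v(n)\}$ together with Cram\'er's theorem as in \eqref{sample-mean-left-tail}, and the upper bound via the Chernoff/exponential Markov bound for fixed $s>0$, passing to the limit (where you make explicit the monotone/dominated convergence of $\E{\mb P}{\exp(-s(\zeta\wedge u(n)))}$ that the paper asserts implicitly), and then taking the supremum over $s$ to recover $I(b)$.
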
 

\begin{proof}
    First, observe that
    \begin{equation}
        {\mb P} \left[\frac 1n \sum_{i=1}^n \zeta_i \wedge u(n) < \mu-b+ v(n)  \right] \geq 
         {\mb P} \left[\frac 1n \sum_{i=1}^n \zeta_i  < \mu-b  \right] =
            e^{-I(b)n(1+o(1)) }.
    \end{equation}
    We invoke  $v(n)\geq 0, u(n) <\infty$ to obtain the inequality,  Cram\`ers theorem to get the equality. This provides an asymptotic lower bound. Recall that 
    \begin{equation*}
    I(b) := \sup_{s>0} \, (b-\mu)s-\log \E{\mb P}{ \exp(-s\zeta ) } .
\end{equation*}
{
For any $s>0$ an exponential Markov bound guarantees 
 \begin{align*}
   {\mb P} \left[\textstyle\frac 1n \sum_{i=1}^n \zeta_i \wedge u(n) < \mu-b+ v(n)  \right] = & {\mb P} \left[\exp(-s\textstyle\sum_{i=1}^n \zeta_i \wedge u(n)) > \exp(-sn(\mu-b+ v(n)) \right] \\
   \leq & \tfrac{\E{\mb P}{\exp(-s\textstyle\sum_{i=1}^n \zeta_i \wedge u(n))}}{\exp(-sn(\mu-b+ v(n))}\\
   \leq & \tfrac{\E{\mb P}{\exp(-s\textstyle (\zeta \wedge u(n)))}^n}{\exp(-sn(\mu-b+ v(n))}
    \end{align*}
    Taking logarithms and dividing by $n$ we obtain 
\begin{align*}
    \frac 1n \log  {\mb P} \left[\textstyle\frac 1n \sum_{i=1}^n \zeta_i \wedge u(n) < \mu-b+ v(n)  \right] &\leq \log \E{\mb P}{ \exp(-s (\zeta \wedge u(n)) )} -s(b-\mu-v(n)) \\
&\rightarrow \log \E{\mb P}{ \exp(-s \zeta) } -s(b-\mu).
\end{align*}
As this bound holds for any $s>0$, we have that
\[
  \limsup_{n\to\infty} \frac 1n \log  {\mb P} \left[\textstyle\frac 1n \sum_{i=1}^n \zeta_i \wedge u(n) < \mu-b+ v(n)  \right] \leq \inf_{s>0}\log \E{\mb P}{ \exp(-s \zeta) } -s(b-\mu)=-I(b)
\]
completing the proof of the upper bound.}
\end{proof}

\subsection{Supporting Results}

\begin{lemma}
  \label{lemma:variance-formula}
  Let $g$ satisfy 
  $m \abs{x-y}\leq \abs{g(x)-g(y)}\leq M \abs{x-y}$ for all $x$ and $y$. Then,
  \[
    m^2 \V{\mb P}{\zeta} \leq  \V{\mb P}{g(\zeta)} \leq M^2 \V{\mb P}{\zeta}.
  \]
\end{lemma}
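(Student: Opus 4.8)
The plan is to exploit the classical representation of a variance as a symmetrized second moment. Let $\zeta'$ denote an independent copy of $\zeta$, so that $(\zeta,\zeta')$ has law $\mb P\otimes\mb P$. For any measurable $f$ with $\E{\mb P}{f(\zeta)^2}<\infty$ one has
\[
  \V{\mb P}{f(\zeta)} = \tfrac12\, \E{\mb P}{(f(\zeta)-f(\zeta'))^2},
\]
which follows by expanding the square and using independence and identical distribution: $\E{}{(f(\zeta)-f(\zeta'))^2} = 2\E{}{f(\zeta)^2} - 2\E{}{f(\zeta)}^2 = 2\V{}{f(\zeta)}$.

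First I would dispose of the integrability bookkeeping. If $\V{\mb P}{\zeta}=\infty$, the upper inequality is vacuous, while the lower Lipschitz bound gives $|g(\zeta)-g(\zeta')|\geq m|\zeta-\zeta'|$ pointwise and hence $\V{\mb P}{g(\zeta)}\geq m^2\V{\mb P}{\zeta}=\infty$ when $m>0$ (and the lower inequality is trivial when $m=0$); either way the claim holds. So I may assume $\V{\mb P}{\zeta}<\infty$. The two-sided bound then also forces $\E{\mb P}{g(\zeta)^2}<\infty$, since $|g(\zeta)|\leq |g(0)|+M|\zeta|$, so the displayed identity applies both to the identity function and to $f=g$.

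The core step is then a pointwise inequality fed into this identity. For all $x,y$ the hypothesis yields
\[
  m^2 (x-y)^2 \;\leq\; (g(x)-g(y))^2 \;\leq\; M^2 (x-y)^2 .
\]
Integrating these three terms against $\mb P\otimes\mb P$ in the variables $(\zeta,\zeta')$ and invoking the symmetrized-variance identity for each gives $m^2\V{\mb P}{\zeta} \leq \V{\mb P}{g(\zeta)} \leq M^2\V{\mb P}{\zeta}$, as claimed. There is no genuine obstacle in this argument; the only point needing a moment of care is the integrability discussion in the degenerate case, which is handled first.
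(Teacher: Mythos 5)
Your proof is correct and takes essentially the same route as the paper: both rely on the symmetrized variance identity $\V{\mb P}{f(\zeta)} = \tfrac12\,\E{\mb P\otimes\mb P}{(f(\zeta)-f(\zeta'))^2}$ and then integrate the pointwise bi-Lipschitz bound. The only difference is that you add a brief (and sensible, though not strictly necessary) discussion of the infinite-variance case.
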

\begin{proof}
  We have
  \[
    \V{\mb P}{g(\zeta)} = \frac 12 \E{\mb P\times\mb P}{(g(\zeta_1)-g(\zeta_2))^2}\leq \frac {M^2}{2} \E{\mb P\times\mb P}{(\zeta_1-\zeta_2)^2} = M^2 \V{\mb P }{\zeta}
  \]
  and
  \[
    \V{\mb P}{g(\zeta)} = \frac 12 \E{\mb P\times\mb P}{(g(\zeta_1)-g(\zeta_2))^2}\geq \frac {m^2}{2} \E{\mb P\times\mb P}{(\zeta_1-\zeta_2)^2} = m^2 \V{\mb P }{\zeta}.
  \]
\end{proof}

\begin{lemma}
  \label{lemma:uniform-bound}
  Let $L$ be slowly varying and represented as in Equation (\ref{eq:karamata-representation}).
  Then, we have
    \[
      \frac{L(z/\tilde \alpha)}{L(1/\tilde \alpha)}\leq z^{\max_{t\geq 1/\tilde \alpha}\epsilon(t)} \cdot \exp\left(\max_{t\geq 1/\tilde \alpha} \delta(t)-\delta(1/\tilde \alpha)\right)
    \]
    for all $z\geq 1$ and $\tilde \alpha \leq 1$.
  \end{lemma}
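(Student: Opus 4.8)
The plan is to substitute the Karamata representation of Equation~\eqref{eq:karamata-representation} directly into the ratio and then bound the two resulting contributions separately. Since $z\geq 1$ and $\tilde\alpha\leq 1$, both points $1/\tilde\alpha$ and $z/\tilde\alpha$ lie in $[1,\infty)$, so the representation is valid at each of them and
\[
  \frac{L(z/\tilde\alpha)}{L(1/\tilde\alpha)}
  = \exp\!\left(\delta(z/\tilde\alpha) - \delta(1/\tilde\alpha) + \int_{1/\tilde\alpha}^{z/\tilde\alpha}\frac{\epsilon(t)}{t}\,\d t\right).
\]
Everything then reduces to estimating $\delta(z/\tilde\alpha)-\delta(1/\tilde\alpha)$ and the integral term from above.

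First I would dispense with the $\delta$-term: because $z\geq 1$ gives $z/\tilde\alpha\geq 1/\tilde\alpha$, we trivially have $\delta(z/\tilde\alpha)\leq \max_{t\geq 1/\tilde\alpha}\delta(t)$, hence $\delta(z/\tilde\alpha)-\delta(1/\tilde\alpha)\leq \max_{t\geq 1/\tilde\alpha}\delta(t)-\delta(1/\tilde\alpha)$. Next I would bound the integral: every $t$ in the integration range $[1/\tilde\alpha, z/\tilde\alpha]$ satisfies $t\geq 1/\tilde\alpha$, so $\epsilon(t)\leq \max_{s\geq 1/\tilde\alpha}\epsilon(s)$, and since $1/t>0$ on this interval,
\[
  \int_{1/\tilde\alpha}^{z/\tilde\alpha}\frac{\epsilon(t)}{t}\,\d t
  \leq \Bigl(\max_{s\geq 1/\tilde\alpha}\epsilon(s)\Bigr)\int_{1/\tilde\alpha}^{z/\tilde\alpha}\frac{\d t}{t}
  = \Bigl(\max_{s\geq 1/\tilde\alpha}\epsilon(s)\Bigr)\log z.
\]
Adding the two bounds and exponentiating yields exactly $L(z/\tilde\alpha)/L(1/\tilde\alpha)\leq z^{\max_{t\geq 1/\tilde\alpha}\epsilon(t)}\exp\bigl(\max_{t\geq 1/\tilde\alpha}\delta(t)-\delta(1/\tilde\alpha)\bigr)$.

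There is no genuine obstacle here; this is a direct computation. The only points deserving a word of care are that $\delta$ and $\epsilon$ are bounded on $[1/\tilde\alpha,\infty)$ — as is used in the Karamata representation elsewhere in the paper — so the two maxima are finite, and that the hypotheses $z\geq 1$ and $\tilde\alpha\leq 1$ are precisely what keeps the integration limits correctly ordered and inside the domain $[1,\infty)$ on which the representation holds.
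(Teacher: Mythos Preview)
Your proof is correct and follows essentially the same approach as the paper: both substitute the Karamata representation, split the exponent into the $\delta$-difference and the $\epsilon$-integral, bound each by the corresponding supremum over $[1/\tilde\alpha,\infty)$, and evaluate $\int_{1/\tilde\alpha}^{z/\tilde\alpha} t^{-1}\,\d t = \log z$. The only cosmetic difference is the order in which the two terms are handled.
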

  \begin{proof}
    We have for any $z\geq 1$ that $z/\tilde \alpha\geq 1/\tilde \alpha\geq 1$.
    Hence, via the Karamata representation of $L$ we get
    \begin{align*}
      \frac{L(z/\tilde \alpha)}{L(1/\tilde \alpha)} = & \frac{\exp\left(\delta(z/\tilde \alpha)+\int_{1}^{z/\tilde \alpha} \frac{\epsilon(t)}{t} \, \d t\right)}{\exp\left(\delta(1/\tilde \alpha)+\int_{1}^{1/\tilde \alpha} \frac{\epsilon(t)}{t} \, \d t\right)} \\
      = & \exp\left( \int_1^{z/\tilde \alpha} \frac{\epsilon(t)}{t} \d t -  \int_1^{1/\tilde \alpha} \frac{\epsilon(t)}{t} \d t  + \delta(z/\tilde \alpha) - \delta(1/\tilde \alpha) \right) \\
      = & \exp\left(\int_{1/\tilde \alpha}^{z/\tilde \alpha} \frac{\epsilon(t)}{t} \d t + \delta(z/\tilde \alpha) - \delta(1/\tilde \alpha) \right)\\
      \leq & \exp\left(\max_{t\in [1/\tilde \alpha, z/\tilde \alpha]}\epsilon(t) \cdot \int_{1/\tilde \alpha}^{z/\tilde \alpha} \frac{1}{t} \d t + \delta(z/\tilde \alpha) - \delta(1/\tilde \alpha) \right)\\
      \leq & \exp\left(\max_{t\geq 1/\tilde \alpha}\epsilon(t) \cdot \int_{1/\tilde \alpha}^{z/\tilde \alpha} \frac{1}{t} \d t+ \delta(z/\tilde \alpha) - \delta(1/\tilde \alpha)\right)\\
      = & \exp\left(\max_{t\geq 1/\tilde \alpha}\epsilon(t) \cdot \log(z)+ \delta(z/\tilde \alpha) - \delta(1/\tilde \alpha)\right)\\
      \leq & z^{\max_{t\geq 1/\tilde \alpha}\epsilon(t)} \cdot \exp\left(\max_{t\geq 1/\tilde \alpha} \delta(t)-\delta(1/\tilde \alpha)\right).
    \end{align*}
    The statement is proven.
  \end{proof}  
  
\end{document}